    \setlist{nolistsep}
    \setlist[enumerate]{label={\upshape(\roman*)}}
    \definecolor{urlcolor}{rgb}{0,0,0}
    \definecolor{linkcolor}{rgb}{.7,0.10,0.2}
    \definecolor{citecolor}{rgb}{.12,.54,.11}
\declaretheoremstyle[
    spaceabove=9pt, spacebelow=9pt,
    postheadspace=.5em,
    headfont=\normalfont\bfseries,
    headpunct={},
    headformat={\NUMBER.\@\NOTE},
    notefont=\normalfont\bfseries\boldmath,
    notebraces={}{.},
]{para}
\theoremstyle{plain}
\newtheorem{theorem}{Theorem}[section]
\newtheorem{proposition}[theorem]{Proposition}
\newtheorem{lemma}[theorem]{Lemma}
\newtheorem{conjecture}[theorem]{Conjecture}
\newtheorem{corollary}[theorem]{Corollary}
\theoremstyle{definition}
\newtheorem{remark}[theorem]{Remark}
\newtheorem{question}[theorem]{Question}
\theoremstyle{para}
\numberwithin{equation}{section}
\renewcommand{\theta}{\uptheta}
\renewcommand{\alpha}{\upalpha}
\renewcommand{\beta}{\upbeta}
\renewcommand{\gamma}{\upgamma}
\renewcommand{\delta}{\updelta}
\renewcommand{\zeta}{\upzeta}
\renewcommand{\pi}{\uppi\hspace{0.05em}}
\renewcommand{\rho}{\uprho}
\renewcommand{\xi}{\upxi}
\renewcommand{\chi}{\upchi}
\renewcommand{\sigma}{\upsigma}
\renewcommand{\Lambda}{\Uplambda}
\renewcommand{\Gamma}{\Upgamma}
\renewcommand{\phi}{\upphi}
\renewcommand{\psi}{\uppsi}
\renewcommand{\nu}{\upnu}
\renewcommand{\tau}{\uptau}
\renewcommand{\mu}{\upmu}
\renewcommand{\eta}{\upeta}
\newcommand{\CB}{{\mathcal B}}
\newcommand{\CF}{{\mathcal F}}
\newcommand{\CH}{{\mathcal H}}
\newcommand{\CL}{{\mathcal L}}
\newcommand{\CO}{{\mathcal O}}
\newcommand{\CU}{{\mathcal U}}
\newcommand{\CX}{{\mathcal X}}
\newcommand{\CY}{{\mathcal Y}}
\newcommand{\CZ}{{\mathcal Z}}
\DeclareMathOperator{\Hom}{Hom}
\DeclareMathOperator{\MMHM}{\mathsf{MMHM}}
\DeclareMathOperator{\Perv}{\mathsf{Perv}}
\DeclareMathOperator{\MHM}{\mathsf{MHM}}
\DeclareMathOperator{\Map}{Map}
\DeclareMathOperator{\Supp}{Supp}
\DeclareMathOperator{\Crit}{Crit}
\DeclareMathOperator*{\colim}{colim}
\DeclareMathOperator{\pr}{pr}
\newcommand{\tor}{\mathrm{tor}}
\DeclareMathOperator{\fib}{fib}
\DeclareMathOperator{\cl}{cl}
\title[Multiplicative dimensional reduction]{Multiplicative dimensional reduction}
\author{Tasuki Kinjo}
\address{Research Institute for Mathematical Sciences, Kyoto University, Kyoto 606-8502, Japan}
\email{tkinjo@kurims.kyoto-u.ac.jp}
\begin{document}

\setstretch{1.3} 

\begin{abstract}
    We prove the multiplicative version of the dimensional reduction theorem in cohomological Donaldson--Thomas theory. 
More precisely, we show that the BPS cohomology associated with the loop stack of a \(0\)-shifted symplectic stack admits a description analogous to orbifold cohomology, even though our stacks are not necessarily Deligne--Mumford. 
As an application, we propose a new, purely two-dimensional formulation of the topological mirror symmetry conjecture for the moduli space of \(G\)-Higgs bundles, which in turn leads to a formulation of the conjecture for logarithmic \(G\)-Higgs bundles. 
We also investigate a twisted version of the multiplicative dimensional reduction, which applies, in particular, to the cohomological Donaldson--Thomas theory for \(S^1\)-bundles over compact oriented surfaces, and more generally to Seifert-fibred \(3\)-manifolds.
    \end{abstract}
    
    \maketitle
    
    \setcounter{tocdepth}{1}
    
    \tableofcontents
    
    \section{Introduction}
    
    \subsection{Motivations}
    
    In recent years, the theory of Donaldson--Thomas (DT) perverse sheaves $\varphi = \varphi_{\CX}$ for oriented $(-1)$-shifted symplectic stacks $\CX$ introduced by \textcite{BBBBJ15} has been applied to several areas of mathematics beyond DT theory, including three-dimensional topology \cite{abouzaid2020sheaf,kaubrys2024cohomological}, deformation quantization \cite{gunningham2023deformation}, and relative Langlands program \cite{KKPS}.
    Although DT perverse sheaves are difficult to compute in general, there is a particularly simple description for the $(-1)$-shifted cotangent stacks $\mathcal{X} = \mathrm{T}^*[-1]\CY$: namely, the following \emph{dimensional reduction isomorphism} holds \cite{Kin21, KKPS}:
    \begin{equation}\label{eq-additive-dim-red}
     \mathrm{H}^*(\mathrm{T}^*[-1]\CY, \varphi) \cong \mathrm{H}^{\mathrm{BM}}_{-*+\dim \CY}(\CY).
    \end{equation}
    Here, $\mathrm{H}^{\mathrm{BM}}_{*}(-)$ denotes the Borel--Moore homology.
    This theorem is called the \emph{dimensional reduction theorem} for the following reason. Let $S$ be a smooth algebraic surface and $\mathcal{P}\mathrm{erf}_S^{\mathrm{c}}$ the moduli stack of compactly supported perfect complexes on $S$.
    Then there is a natural isomorphism
    \[
        \mathrm{T}^*[-1] \mathcal{P}\mathrm{erf}_S^{\mathrm{c}} \cong \mathcal{P}\mathrm{erf}_{\mathrm{Tot}_S(K_S)}^{\mathrm{c}}.
    \]
    In particular, the dimensional reduction theorem implies an isomorphism
    \[
     \mathrm{H}^*(\mathcal{P}\mathrm{erf}_{\mathrm{Tot}_S(K_S)}^{\mathrm{c}}, \varphi) 
     \cong 
     \mathrm{H}^{\mathrm{BM}}_{-*+\dim \mathcal{P}\mathrm{erf}_S^{\mathrm{c}}}(\mathcal{P}\mathrm{erf}_S^{\mathrm{c}}).
    \]
    The dimensional reduction isomorphism has many applications, including the study of the $\chi$-independence phenomenon for the moduli stack of $\mathrm{GL}_n$-Higgs bundles \cite{kinjo2024cohomological}.
    
    As mentioned above, DT perverse sheaves have also been applied to low-dimensional topology. 
    In particular, Safronov and Williams \cite{safronov2023batalin} proposed that the cohomology of the DT perverse sheaf on the character stack $\mathcal{L}\mathrm{oc}_G(M)$ of a $3$-manifold $M$ with reductive gauge group $G$ provides a model for the state space of the Kapustin--Witten TQFT at a generic quantum parameter.
    Analogously to the case of Calabi--Yau threefolds, it is natural to apply the dimensional reduction theorem to relate the cohomology of the DT perverse sheaf on $\mathcal{L}\mathrm{oc}_G(\Sigma_g \times S^1)$ with the Borel--Moore homology of the character stack $\mathcal{L}\mathrm{oc}_G(\Sigma_g)$, as the latter has been intensively studied over the past two decades \cite{hausel2008mixed,mellit2025toric}.
    However, we cannot directly apply the dimensional reduction theorem, since $\mathcal{L}\mathrm{oc}_G(\Sigma_g \times S^1)$ is not the $(-1)$-shifted cotangent stack of $\mathcal{L}\mathrm{oc}_G(\Sigma_g)$ but rather its \emph{loop stack}, which is a multiplicative analogue of the $(-1)$-shifted tangent stack.\footnote{Since $\mathcal{L}\mathrm{oc}_G(\Sigma_g)$ is $0$-shifted symplectic, the $(-1)$-shifted tangent and cotangent stacks are canonically identified.}
    
    Motivated by this observation, it is natural to ask the following question:
    \begin{question}\label{ques-multiplicative-dim-red}
    For a $0$-shifted symplectic stack $\mathcal{Y}$, is there a way to describe the cohomology of the DT perverse sheaf on the loop stack
    $\mathcal{L}\mathcal{Y} = \mathrm{Map}(S^1, \mathcal{Y})$ in terms of invariants of $\CY$?
    \end{question}
    
    This question has been frequently raised within the DT community (including by the author) since the publication of \cite{Kin21}. Unfortunately, unlike the additive case \eqref{eq-additive-dim-red}, it appears to have no simple answer, as the fibres of the projection $\mathcal{L} \mathcal{Y} \to \mathcal{Y}$ have highly nontrivial topology.
    
    Recently, the author and collaborators \cite[Section~7]{bu2025cohomology} constructed canonical finite-dimensional subspaces 
    \[
        \mathrm{H}_{\mathrm{BPS}}^*(\mathrm{L} Y) \subset \mathrm{H}^*(\mathcal{L} \mathcal{Y}, \varphi), 
        \quad    
        \mathrm{H}_{\mathrm{BPS}}^*(Y) \subset \mathrm{H}_{-*}^{\mathrm{BM}}(\mathcal{Y}),
    \]
    called the \emph{BPS cohomology}, assuming that a good moduli space $\mathcal{Y} \to Y$ exists.
    Furthermore, in \cite[Theorems~9.3.3 and~9.4.3]{bu2025cohomology} and \cite[Theorem 5.1, Theorem 5.3]{hennecart2025strong}, we constructed\footnote{More generally, decompositions are obtained for symmetric oriented $(-1)$-shifted symplectic stacks with good moduli spaces, under mild assumptions satisfied by all reasonable stacks.} decompositions of the cohomology $\mathrm{H}^*(\mathcal{L} \mathcal{Y}, \varphi)$ and Borel--Moore homology $\mathrm{H}_{-*}^{\mathrm{BM}}(\mathcal{Y})$, whose primary summands are the BPS cohomologies, and whose residual summands are described in terms of the image of BPS cohomology under the cohomological Hall induction — an intrinsic version of parabolic induction introduced in \cite{KPS}.
    In particular, the study of $\mathrm{H}^*(\mathcal{L} \mathcal{Y}, \varphi)$ and $\mathrm{H}_{-*}^{\mathrm{BM}}(\mathcal{Y})$ is completely reduced to that of the BPS cohomologies.
    The main result of this paper answers an analogue of \Cref{ques-multiplicative-dim-red} for the BPS cohomology.
    
    \subsection{Main result}
    
    To state the main result, we recall the definition of the BPS sheaf for $0$-shifted symplectic stacks and their loop stacks.
    For simplicity, we recall only the definition of the zeroth BPS cohomology, which suffices for applications to $\mathcal{L}\mathrm{oc}_G(\Sigma_g \times S^1)$ when $G$ is semisimple.
    
    Let $\mathcal{Y}$ be a $0$-shifted symplectic stack admitting a good moduli space $p \colon \mathcal{Y} \to Y$ in the sense of Alper \cite[Definition~4.1]{Alp13}.
    Consider the following diagrams:
    \[
    \begin{tikzcd}
        {\mathcal{LY}} & {\mathcal{Y}} \\
        \mathrm{L} Y & Y
        \arrow["r", from=1-1, to=1-2]
        \arrow["{\tilde{p}}"', from=1-1, to=2-1]
        \arrow["p", from=1-2, to=2-2]
        \arrow["{\bar{r}}"', from=2-1, to=2-2]
    \end{tikzcd}
    \qquad
    \begin{tikzcd}
        {\hat{\mathcal{Y}}} & {\mathrm{T}^*[-1]\mathcal{Y}} & {\mathcal{Y}} \\
        {\hat{Y}} && {Y.}
        \arrow[equals, from=1-1, to=1-2]
        \arrow["\hat{p}"', from=1-1, to=2-1]
        \arrow["\pi", from=1-2, to=1-3]
        \arrow["p", from=1-3, to=2-3]
        \arrow["{\bar{\pi}}"', from=2-1, to=2-3]
    \end{tikzcd}
    \]
    The vertical arrows are good moduli morphisms.
    By the AKSZ formalism \cite[\S2.1]{PTVV13}, $\mathcal{LY}$ and $\hat{\CY}$ are $(-1)$-shifted symplectic stacks, naturally equipped with orientations.
    Hence, we can consider the DT perverse sheaves
    \[
        \varphi_{\mathcal{LY}} \in \Perv(\CL \CY), \qquad \varphi_{\hat{\CY}} \in \Perv(\hat{\CY}).
    \]
    We define the $0$-th BPS sheaves on $\mathrm{L}Y$ and $\hat{Y}$ as the degree-zero perverse cohomology sheaves:
    \[
     \mathcal{BPS}^{(0)}_{\mathrm{L}Y} \coloneqq {}^p \CH^0(\tilde{p}_* \varphi_{\CL \CY}) \in \Perv(\mathrm{L}Y), 
     \qquad
     \mathcal{BPS}^{(0)}_{\hat{Y}} \coloneqq {}^p \CH^0(\hat{p}_* \varphi_{ \hat{\CY}}) \in \Perv(\hat{Y}). 
    \]
    It is shown in \cite[Theorem~7.2.15]{bu2025cohomology} that $\mathcal{BPS}^{(0)}_{\hat{Y}}$ is supported on the zero section $0_Y \colon Y \hookrightarrow \hat{Y}$. We then set
    \[
        \mathcal{BPS}^{(0)}_{Y} \coloneqq 0_Y^* \mathcal{BPS}^{(0)}_{\hat{Y}} \in \Perv(Y).
    \]
    
    Now, for each positive integer $n \in \mathbb{Z}_{>0}$, define the $n$-torsion loop stack
    \[
        \mathcal{L}_n \mathcal{Y} \coloneqq \mathrm{Map}(\mathrm{B}(\mathbb{Z}/n\mathbb{Z}), \mathcal{Y}).    
    \]
    It admits a good moduli space $p_n \colon \mathcal{L}_n \mathcal{Y} \to \mathrm{L}_nY$, and fits into the commutative diagram:
    \[
    \begin{tikzcd}
        {\mathcal{L}_n \mathcal{Y}} & {\mathcal{L \mathcal{Y}}} \\
        {\mathrm{L}_nY} & {\mathrm{L}Y.}
        \arrow["{\iota_n}", from=1-1, to=1-2]
        \arrow["{p_n}"', from=1-1, to=2-1]
        \arrow["{\tilde{p}}", from=1-2, to=2-2]
        \arrow["{\bar{\iota}_n}"', from=2-1, to=2-2]
    \end{tikzcd}
    \]
    The horizontal arrows are closed immersions induced by the map $S^1 = \mathrm{B}\mathbb{Z} \to \mathrm{B}(\mathbb{Z}/n\mathbb{Z})$.
    If $m$ is a positive integer divisible by $n$, there is a natural map 
    \[
     \iota_{n, m} \colon \mathcal{L}_n \mathcal{Y} \to \mathcal{L}_m \mathcal{Y},
    \]
    which is an isomorphism onto a union of connected components. We define the \emph{torsion loop stack}\footnote{The notion of torsion loop stacks appeared in \cite[Definition~1.1]{fu2025hochschild} for derived Deligne--Mumford stacks to equip the inertial stack with the ``correct'' derived structure. In contrast, for derived Artin stacks, the torsion loop and the ordinary loop stacks have different classical truncations.} as the colimit
    \[
         \mathcal{L}_{\mathrm{tor}} \mathcal{Y} \coloneqq \colim_{n} \mathcal{L}_n \mathcal{Y}.
    \]
    The stack $\mathcal{L}_{\mathrm{tor}} \CY$ admits a good moduli space $p_{\tor} \colon \mathcal{L}_{\mathrm{tor}} \CY \to \mathrm{L}_{\tor} Y$, fitting into the commutative diagram
    \[
    \begin{tikzcd}
        {\mathcal{L}_{\tor} \mathcal{Y}} & {\mathcal{L \mathcal{Y}}} \\
        {\mathrm{L}_{\tor}Y} & {\mathrm{L}Y.}
        \arrow["{\iota}", from=1-1, to=1-2]
        \arrow["{p_{\tor}}"', from=1-1, to=2-1]
        \arrow["{\tilde{p}}", from=1-2, to=2-2]
        \arrow["{\bar{\iota}}"', from=2-1, to=2-2]
    \end{tikzcd}
    \]
    By the AKSZ formalism \cite[\S2.1]{PTVV13}, $\mathcal{L}_{\tor} \mathcal{Y}$ is $0$-shifted symplectic. In particular, there is a BPS sheaf $\mathcal{BPS}^{(0)}_{\mathrm{L}_{\tor} Y}$ on $\mathrm{L}_{\tor} Y$.
    Our main result is the following, which can be thought of as the multiplicative analogue of the additive dimensional reduction theorem \eqref{eq-additive-dim-red}:
    
    \begin{theorem}[= \Cref{thm-main-0}]
    There is a natural isomorphism
    \[
     \mathcal{BPS}^{(0)}_{\mathrm{L}Y} \cong \bar{\iota}_* \mathcal{BPS}^{(0)}_{\mathrm{L}_{\mathrm{tor}} Y}.
    \]
    \end{theorem}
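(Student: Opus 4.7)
The plan is to reduce to an étale-local computation in which $\CL\CY$ is decomposed by the semisimple part $g_s$ of the loop monodromy, disposing of the non-torsion strata by a central-torus vanishing argument and identifying the torsion strata with $\mathrm{T}^*[-1]\CL_{\tor}\CY$ via AKSZ. Both sides of the proposed isomorphism and all relevant BPS constructions commute with étale base change on $Y$, so by Alper's local structure theorem for good moduli spaces combined with a Darboux-type presentation for $0$-shifted symplectic stacks, one may assume $\CY$ is presented (up to Hamiltonian reduction) as a quotient stack $[T/G]$ with $G$ reductive; a point of $\CL\CY$ over $Y$ is then represented by $(x,g) \in T \times G$ with $g \cdot x = x$, and we write $g = g_s g_u$ for the Jordan decomposition.

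Suppose $g_s$ has infinite order. Then the identity component $\overline{\langle g_s \rangle}^{\circ}$ of the Zariski closure is a positive-dimensional torus lying in the centre of $Z_G(g)$, since $g_s$ is central in $Z_G(g_s) \supseteq Z_G(g)$. In the local critical chart presenting $\CL\CY$ near the corresponding stratum, this torus acts trivially on both the smooth ambient scheme and the DT potential, so the standard vanishing of the zeroth BPS cohomology in the presence of central $\BG_m$-stabilisers -- a basic input in the BPS decomposition theorems of \cite{bu2025cohomology, hennecart2025strong} -- forces $\mathcal{BPS}^{(0)}_{\mathrm{L}Y}$ to vanish on these strata. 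Consequently $\mathcal{BPS}^{(0)}_{\mathrm{L}Y}$ is supported on $\mathrm{L}_{\tor}Y$, and it remains to compute its restriction there.

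On the torsion strata ($g_s$ torsion), the tangent complex of $\CL\CY$ at $(x, g_s) \in \CL_{\tor}\CY$ is $\mathrm{R}\Gamma(S^1, \mathbb{T}_{\CY}|_x)$ with $\langle g_s \rangle$ acting semisimply; it splits canonically into its $g_s$-fixed part (the tangent to $\CL_{\tor}\CY$) and a Poincaré-dual $[-1]$-shifted piece parametrising the unipotent perturbation $g_u$. The latter piece is exactly the $(-1)$-shifted cotangent fibre over $\CL_{\tor}\CY$, and AKSZ functoriality (applied to the comparison $S^1 \to B(\BZ/n\BZ)$) makes this splitting compatible with the $(-1)$-shifted symplectic forms and orientations. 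Transporting $\varphi_{\CL\CY}$ through the resulting formal equivalence $\CL\CY \simeq \mathrm{T}^*[-1]\CL_{\tor}\CY$ along $\CL_{\tor}\CY$, pushing down to the good moduli space, and unwinding the definition $\mathcal{BPS}^{(0)}_{\mathrm{L}_{\tor}Y} = 0^* \mathcal{BPS}^{(0)}_{\widehat{\mathrm{L}_{\tor}Y}}$ together with the support result \cite[Theorem~7.2.15]{bu2025cohomology} then identifies $\mathcal{BPS}^{(0)}_{\mathrm{L}Y}|_{\mathrm{L}_{\tor}Y}$ with $\mathcal{BPS}^{(0)}_{\mathrm{L}_{\tor}Y}$. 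The hardest part, I expect, will be verifying this AKSZ equivalence precisely enough that it preserves perverse $t$-structures and orientations, so that the ${}^p\mathcal{H}^0$ operations in the two definitions genuinely correspond; a secondary obstacle is ensuring that the central-torus vanishing argument descends through the good moduli pushforward $\tilde{p}_*$ for Artin (not Deligne--Mumford) stabilisers, in particular when the unipotent part $g_u$ is nontrivial.
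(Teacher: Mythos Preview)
Your support argument is essentially the paper's Proposition~4.2: at a closed point of $\CL\CY$ the stabilizer is reductive, so the monodromy $g$ is automatically semisimple (no Jordan decomposition needed), and if $g$ is non-torsion then the closure of $\langle g\rangle$ contains a positive-dimensional torus $T'$ acting trivially on $\mathrm{H}^0(\mathbb{T}_{\CL\CY,x})$; the vanishing then comes from \cite[Proposition~7.2.9]{bu2025cohomology}. So this half is fine.

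The restriction half has a real gap. Your plan is to write down a formal equivalence $\CL\CY \simeq \mathrm{T}^*[-1]\CL_{\tor}\CY$ along the torsion locus and ``transport $\varphi$ through it.'' But a pointwise or formal-local equivalence does not by itself produce a \emph{natural} isomorphism of BPS sheaves: formal identifications at different points need not glue, and even if they did you would only obtain some isomorphism, not a canonical one. The paper solves this by constructing a global deformation family $\tilde{\CL}\CY \to \mathbb{A}^1$ --- literally the deformation to the normal cone of the constant-loop inclusion $\CY \hookrightarrow \CL\CY$ --- whose fibre over $\lambda \neq 0$ is $\CL\CY$ and whose fibre over $0$ is $\mathrm{T}^*[-1]\CY$. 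This family carries a relative exact $(-1)$-shifted symplectic structure and orientation (Proposition~4.4), so the perverse pullback machinery of \cite{khan2025perverse} produces a family DT sheaf and hence a family BPS sheaf. The comparison isomorphism is then the specialization map $\bar{i}^* \tilde{\CB} \to \psi_{\bar{t}}\tilde{\CB}$, which is a globally defined canonical map; its invertibility is checked \'etale-locally via the unipotent loop stack $\CL^{\mathrm{u}}\CY$, which is formally \'etale over both $\tilde{\CL}\CY$ and $\hat{\CY}\times\mathbb{A}^1$ and trivializes the family near the central fibre (Lemma~4.5). Finally, the passage from constant loops to the full torsion locus is handled not by your direct AKSZ comparison $S^1 \to \mathrm{B}(\mathbb{Z}/n\mathbb{Z})$ but by the auxiliary map $\eta_n \colon \CL\CL_n\CY \to \CL\CY$ induced by $1 \mapsto (1,1) \in \mathbb{Z}\oplus\mathbb{Z}/n\mathbb{Z}$, which is \'etale near the constant loops of $\CL_n\CY$ and reduces the question to Proposition~4.3 applied to $\CL_n\CY$ in place of $\CY$. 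You correctly anticipated that orientation and $t$-structure compatibility are the crux; the deformation-to-normal-cone device is exactly what makes those checks tractable and the resulting isomorphism natural.
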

    
    Specializing to $\mathcal{Y} = \mathcal{L}\mathrm{oc}_G(\Sigma)$ for an oriented closed surface $\Sigma$ and a semisimple group $G$, we obtain:
    
    \begin{corollary}
    There is a natural isomorphism
    \[
     \mathrm{H}^*(\mathrm{Loc}_G(\Sigma \times S^1), \mathcal{BPS}^{(0)}_{\mathrm{Loc}_G(\Sigma \times S^1)}) 
     \cong 
     \bigoplus_{[g] \in G_{\mathrm{tor}} / G}
     \mathrm{H}^*(\mathrm{Loc}_{\mathrm{C}_G(g)}(\Sigma), \mathcal{BPS}^{(0)}_{\mathrm{Loc}_{\mathrm{C}_G(g)}(\Sigma)}),
    \]
    where the right-hand side runs over the set of $G$-conjugacy classes of torsion elements.
    \end{corollary}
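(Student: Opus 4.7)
The plan is to apply the main theorem to $\CY \coloneqq \mathcal{L}\mathrm{oc}_G(\Sigma)$. Since $G$ is semisimple, the Killing form makes $\mathrm{B}G$ a $2$-shifted symplectic stack, and the AKSZ construction applied to the oriented closed surface $\Sigma$ yields a $0$-shifted symplectic structure on $\CY$, whose good moduli space is $Y = \mathrm{Loc}_G(\Sigma)$. The conclusion will follow once I identify both the ordinary and the torsion loop stacks of $\CY$ and pass to global sections in the main theorem.

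For the ordinary loop stack, the universal property of mapping stacks gives
\[
 \CL \CY \cong \mathrm{Map}(\Sigma \times S^1, \mathrm{B}G) = \mathcal{L}\mathrm{oc}_G(\Sigma \times S^1),
\]
so that $\mathrm{L} Y = \mathrm{Loc}_G(\Sigma \times S^1)$. For the torsion loop stack, the same formalism gives $\CL_n \CY \cong \mathrm{Map}(\Sigma, \mathrm{Map}(\mathrm{B}(\BZ/n\BZ), \mathrm{B}G))$. Since $\mathrm{Map}(\mathrm{B}(\BZ/n\BZ), \mathrm{B}G)$ is the quotient stack of $n$-torsion elements of $G$ under conjugation, which decomposes as $\coprod_{[g]} \mathrm{B}\,\mathrm{C}_G(g)$, I obtain
\[
 \CL_n \CY \cong \coprod_{[g]} \mathcal{L}\mathrm{oc}_{\mathrm{C}_G(g)}(\Sigma),
\]
where $[g]$ ranges over $G$-conjugacy classes of $n$-torsion elements; passing to the colimit over $n$,
\[
 \CL_{\tor} \CY \cong \coprod_{[g] \in G_{\mathrm{tor}}/G} \mathcal{L}\mathrm{oc}_{\mathrm{C}_G(g)}(\Sigma), \qquad \mathrm{L}_{\tor} Y \cong \coprod_{[g]} \mathrm{Loc}_{\mathrm{C}_G(g)}(\Sigma).
\]
Combined with the main theorem $\mathcal{BPS}^{(0)}_{\mathrm{L}Y} \cong \bar{\iota}_* \mathcal{BPS}^{(0)}_{\mathrm{L}_{\tor}Y}$, taking global sections yields the claimed direct-sum decomposition, provided the $0$-shifted symplectic structure on $\CL_{\tor}\CY$ restricts on each component to the AKSZ structure on $\mathcal{L}\mathrm{oc}_{\mathrm{C}_G(g)}(\Sigma)$. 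This compatibility uses the standard fact that torsion elements in a reductive $G$ (in characteristic zero) are semisimple, so $\mathrm{C}_G(g)$ is itself reductive and the Killing form on $\Fg$ restricts to a nondegenerate invariant form on $\Lie\,\mathrm{C}_G(g)$.

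The hard part is the derived-geometric bookkeeping: the mapping-stack identities above must hold at the level of \emph{derived} stacks, not merely classical truncations — as the footnote in the paper emphasizes, the torsion loop stack and the ordinary loop stack differ precisely in their derived structure — and the AKSZ $0$-shifted symplectic structures must be seen to be compatible across the decomposition. Once these points are established, the splitting of $\mathcal{BPS}^{(0)}_{\mathrm{L}_{\tor}Y}$ across connected components of $\mathrm{L}_{\tor} Y$ is automatic, and the corollary follows formally from the main theorem.
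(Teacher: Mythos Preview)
Your approach is correct and matches the paper's: the corollary is obtained by specializing the main theorem to $\CY = \mathcal{L}\mathrm{oc}_G(\Sigma)$ and invoking the torsion-loop decomposition $\CL_{\tor}\mathcal{L}\mathrm{oc}_G(\Sigma) \cong \coprod_{[g]} \mathcal{L}\mathrm{oc}_{\mathrm{C}_G(g)}(\Sigma)$ (this is \Cref{cor-torsionloop-HG} in the paper). The derived-level bookkeeping you flag is handled in the paper's \Cref{lem-Ltor-quotient} by showing, via the cotangent computation of \cite[Theorem~3.3(2)]{fu2025hochschild}, that $\mathrm{Map}(\mathrm{B}(\BZ/n\BZ),\mathrm{B}G)$ is already a disjoint union of classifying stacks $\coprod \mathrm{B}\,\mathrm{C}_G(g)$ at the derived level, whence the mapping-stack identity $\CL_n\CY \cong \coprod \mathcal{L}\mathrm{oc}_{\mathrm{C}_G(g)}(\Sigma)$ holds derivedly; the symplectic compatibility is left implicit in the AKSZ formalism, exactly as you suggest.
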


    In fact, we will prove in \Cref{thm-mult-dimred-loc} that only special torsion elements called quasi-isolated elements contribute to the direct sum.
    The classification of the quasi-isolated elements is obtained by \textcite{bonnafe2005quasi}.

    In \cite[Conjecture~10.3.18]{bu2025cohomology}, the author and collaborators formulated a version of the topological mirror symmetry conjecture for the moduli space of $G$-Higgs bundles using the BPS cohomology of the good moduli space of the loop stack, as motivated by the work of Hausel and Thaddeus \cite{hausel2003mirror}.
    A variant of the above corollary for $G$-Higgs bundles provides a new formulation of the topological mirror symmetry conjecture for both $G$-Higgs bundles and their logarithmic analogues.
    See \Cref{conj-stringy-BPS} and \Cref{conj-stringy-IH} for the details.
    
    We also discuss a twisted version of the multiplicative dimensional reduction theorem, giving a $2$-dimensional description of the BPS cohomology for the character stacks for nontrivial $S^1$-bundles and, more generally, Seifert-fibred $3$-manifolds.
    This yields a $3$-dimensional perspective on a twisted form of the topological mirror symmetry conjecture for $G$-Higgs bundles.
    See \Cref{cor-enough-divisible} and \Cref{conj-Seifert} for the details.
    
    \NaC

    \begin{itemize}
        \item All geometric objects are defined over the complex number field.
        \item All derived Artin stacks are assumed to be $1$-Artin, locally of finite type and have affine diagonal.
    \end{itemize}

    \ACK
    
    The author thanks Andres Fernandez Herrero for teaching the author the notion of elliptic endoscopic data,
    Jin Miyazawa for bringing to his attention the notion of Seifert-fibred $3$-manifolds,
    Hyeonjun Park for teaching the author his join work with Jemin You on shifted symplectic rigidification \cite{ParkYou2025_shiftedSymplecticrigidification}
    and Yaoxiong Wen for for bringing the paper \cite{bonnafe2005quasi} to the author's attention.
    He also thanks Pavel Safronov for useful comments on the paper.
    The author was supported by JSPS KAKENHI Grant Number 25K17229.

\section{Torsion loop stacks}

In this section, we introduce \emph{torsion loop stacks} for derived Artin stacks. These were first studied by \textcite{fu2025hochschild} under the name of orbifold inertia stack.

Let $\CY$ be a derived Artin stack. We define the $n$-torsion loop stack and the (ordinary) loop stack by
\[
    \mathcal{L}_n \mathcal{Y} \coloneqq \mathrm{Map}(\mathrm{B}(\mathbb{Z}/n\mathbb{Z}), \mathcal{Y}), \quad         \mathcal{L}\mathcal{Y} \coloneqq \mathrm{Map}(S^1, \mathcal{Y}) = \mathrm{Map}(\mathrm{B} \mathbb{Z}, \mathcal{Y}).      
\]
Here, we summarize basic properties of these stacks:

\begin{proposition}\label{prop-loop}
\begin{enumerate}
    \item The stacks $\mathcal{L}_n \mathcal{Y}$ and $\mathcal{L} \mathcal{Y}$ are derived Artin stacks locally of finite type. \label{item-loop-Artin}
    \item If $\CY$ is locally of finite presentation, then so are $\mathcal{L}_n \mathcal{Y}$ and $\mathcal{L} \mathcal{Y}$. \label{item-loop-lfp}
    \item The natural map $\iota_n \colon \mathcal{L}_n \mathcal{Y} \to \CL\CY$ is a closed immersion. \label{item-loop-cl-imm}
    \item For positive integers $n$ and $m$ with $n \mid m$, the natural transition map $\iota_{n, m} \colon \mathcal{L}_n \mathcal{Y}  \to \mathcal{L}_m \mathcal{Y} $ is an open and closed immersion. \label{item-loop-op-cl}
    \item If $\CY$ admits a good moduli space $p \colon \CY \to Y$, then there are good moduli spaces
          \[
           p_n \colon \CL_n \CY \to \mathrm{L}_n Y, \quad \tilde{p} \colon \CL  \CY \to \mathrm{L}Y.
          \] \label{item-loop-good-moduli}
    \item If $\CY$ is $c$-shifted symplectic, then $\CL_n \CY$ is $c$-shifted symplectic and $\CL \CY$ is $(c-1)$-shifted symplectic. \label{item-loop-AKSZ}
\end{enumerate}
\end{proposition}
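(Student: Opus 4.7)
My plan is to dispatch items (i), (ii), and (vi) via general representability and AKSZ machinery, and to handle items (iii), (iv), (v) by exploiting the short exact sequences
\[
    0 \to \BZ \xrightarrow{n} \BZ \to \BZ/n\BZ \to 0, \qquad 0 \to \BZ/(m/n)\BZ \to \BZ/m\BZ \to \BZ/n\BZ \to 0.
\]
For (i) and (ii), one writes $\CL \CY = \CY \times^h_{\CY \times \CY} \CY$ as a finite derived fiber product, which preserves the classes of derived Artin stacks locally of finite type and locally of finite presentation with affine diagonal; then $\CL_n \CY$ is a further such fiber product (see (iii) below). For (vi), I would apply the AKSZ theorem of \cite{PTVV13}: the circle $S^1 = \mathrm{B}\BZ$ carries a canonical $1$-orientation (its fundamental class) and $\mathrm{B}(\BZ/n)$ carries a canonical $0$-orientation (its rational cohomology being concentrated in degree $0$), so mapping into a $c$-shifted symplectic stack produces the asserted $(c-1)$- and $c$-shifted symplectic structures on $\CL\CY$ and $\CL_n \CY$ respectively.

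For (iii), delooping the first exact sequence gives a fiber sequence $\mathrm{B}\BZ \xrightarrow{[n]} \mathrm{B}\BZ \to \mathrm{B}(\BZ/n)$, which upon applying $\mathrm{Map}(-, \CY)$ yields a Cartesian square
\[
\begin{tikzcd}
    \CL_n \CY \ar[r, "\iota_n"] \ar[d] & \CL \CY \ar[d, "{[n]}"] \\
    \CY \ar[r, "e"'] & \CL \CY,
\end{tikzcd}
\]
where $e$ is the constant-loop section and $[n]$ is the $n$-th power map on loops. Under the running assumption that $\CY$ has affine diagonal, $e$ is the unit section of the inertia stack and is a closed immersion, as it arises from the affine morphism $\Delta \colon \CY \to \CY \times \CY$. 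Base-changing $e$ along $[n]$ then shows $\iota_n$ is a closed immersion.

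Item (iv) follows the same pattern: the second exact sequence produces a Cartesian square realizing $\iota_{n,m}$ as the base change of the constant-loop $\CY \to \CL_{m/n} \CY$ along $\CL_m \CY \to \CL_{m/n} \CY$, hence $\iota_{n,m}$ is a closed immersion by the same argument as (iii). The essential new input is openness, which I would prove étale locally on $Y$. Using the local structure theorem for stacks with good moduli spaces, one reduces to $\CY = [\Spec A / H]$ for $H$ reductive, where
\[
    \CL_m \CY \simeq \bigl[\{ (a, h) \in \Spec A \times H : h^m = 1,\ h \cdot a = a \}/H\bigr].
\]
The classical fact that the $m$-torsion subscheme $H[m] \subset H$ is a finite disjoint union of closed (hence clopen) conjugacy classes indexed by the divisors $d \mid m$ induces a clopen decomposition $\CL_m \CY = \bigsqcup_{d \mid m} \CL_m^{=d} \CY$ by exact order, and $\CL_n \CY$ is precisely $\bigsqcup_{d \mid n} \CL_m^{=d} \CY$, which is therefore also open. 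The main obstacle is checking that this local-on-$Y$ decomposition is canonical and glues across étale charts; this should follow from the intrinsic characterization of $\CL_m^{=d} \CY$ as the locus where the tautological automorphism has exact order $d$, a condition manifestly preserved by étale base change.

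For (v), I again work étale locally with the model $\CY = [\Spec A / H]$ from (iv). Both $\CL \CY$ and $\CL_n \CY$ are then of the form $[\mathrm{affine}/H]$ with $H$ reductive, so classical Geometric Invariant Theory produces good moduli spaces. Uniqueness of good moduli spaces in the sense of Alper \cite{Alp13} guarantees that these local constructions glue to global good moduli morphisms $p_n$ and $\tilde p$. The technical heart across (iv) and (v) is the passage from the local structure theorem to the global statements, which reduces to the universal properties of fiber products and of good moduli spaces.
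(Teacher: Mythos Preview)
Your arguments for (i), (ii), (iii), and (vi) are correct and essentially match what the paper does (the paper simply cites \cite{fu2025hochschild} for (i)--(iv) and invokes AKSZ for (vi); your Cartesian square for (iii) is the content behind that citation).

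There is a genuine gap in your treatment of (iv). You invoke the local structure theorem for stacks with good moduli spaces to reduce to $\CY = [\Spec A / H]$ with $H$ reductive, but item (iv) carries no such hypothesis: it is asserted for an arbitrary derived Artin stack $\CY$ (locally of finite type, affine diagonal), and the good moduli space only enters in (v). Your argument therefore does not cover the statement as written. A cleaner route, which works in full generality, is to show directly that $\iota_{n,m}$ is \'etale: since the projection $q \colon \mathrm{B}(\BZ/m) \to \mathrm{B}(\BZ/n)$ satisfies $q_* q^* \simeq \id$ (the fibre $\mathrm{B}(\BZ/(m/n))$ has trivial coherent cohomology in characteristic zero), the map on tangent complexes $\Gamma(\mathrm{B}(\BZ/n), \phi^* \BT_\CY) \to \Gamma(\mathrm{B}(\BZ/m), q^* \phi^* \BT_\CY)$ is an isomorphism at every point, so $\iota_{n,m}$ is an \'etale closed immersion, hence open and closed. (Incidentally, your claim that the clopen pieces of $\CL_m \CY$ are ``indexed by the divisors $d \mid m$'' is not quite right: there are in general many conjugacy classes of a given exact order.)

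For (v), your approach via the local structure theorem, GIT, and gluing can be made to work, but it is heavier than necessary. The paper's argument is a one-liner: the projection $\CL\CY \to \CY$ is affine (being the base change of the affine diagonal $\CY \to \CY \times \CY$), so \cite[Lemma~4.14]{Alp13} produces a good moduli space for $\CL\CY$ directly from the one for $\CY$; then $\CL_n \CY \hookrightarrow \CL\CY$ is a closed immersion by (iii), hence affine, and the same lemma applies again. No \'etale-local models or gluing are needed.
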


\begin{proof}
    The properties \cref{item-loop-Artin} and \cref{item-loop-lfp} for $\CL_n \CY$ are proved in \cite[Theorem 3.3 (1), (2)]{fu2025hochschild}. The corresponding assertions for $\CL \CY = \CY \times_{\CY \times \CY} \CY$ are obvious.
    The property \cref{item-loop-cl-imm} is proved in \cite[Theorem 3.3 (1)]{fu2025hochschild}.
    The property \cref{item-loop-op-cl} is proved in \cite[Theorem 3.3 (3)]{fu2025hochschild}.
    To prove the property \cref{item-loop-good-moduli}, using \cref{item-loop-cl-imm} and \cite[Lemma 4.14]{Alp13}, it suffices to prove the claim for $\CL \CY$.
    Since the map $\CL \CY \to \CY$ is affine by our assumption on stacks, the claim follows from \cite[Lemma 4.14]{Alp13}.
    The property \cref{item-loop-AKSZ} follows from the AKSZ formalism \cite[\S 2.1]{PTVV13}, since $\mathrm{B} (\mathbb{Z} / n \mathbb{Z})$ and $S^1$ are equipped with $\CO$-orientations of dimension $0$ and $1$ respectively.
\end{proof}

Using the transition maps $\iota_{n, m} \colon \CL_n \CY \to \CL_m \CY$ in \Cref{prop-loop} \cref{item-loop-op-cl}, we define the \emph{torsion loop stack}
\[
 \CL_{\tor} \CY \coloneqq \colim_n \CL_n \CY.    
\]
\Cref{prop-loop} implies the following properties of the torsion loop stacks:

\begin{corollary}
    \begin{enumerate}
        \item The stack $\CL_{\tor} \CY $ is a derived Artin stack locally of finite type.
        \item If $\CY$ is locally of finite presentation, then so is $\CL_{\tor} \CY$.
        \item  If $\CY$ admits a good moduli space $p \colon \CY \to Y$, we have the following commutative diagram
        \[
            \begin{tikzcd}
                {\mathcal{L}_{\tor} \mathcal{Y}} & {\mathcal{L \mathcal{Y}}} \\
                {\mathrm{L}_{\tor}Y} & {\mathrm{L}Y}
                \arrow["{\iota}", from=1-1, to=1-2]
                \arrow["{p_{\tor}}"', from=1-1, to=2-1]
                \arrow["{\tilde{p}}", from=1-2, to=2-2]
                \arrow["{\bar{\iota}}"', from=2-1, to=2-2]
            \end{tikzcd}
        \]
        where the vertical arrows are good moduli morphisms and the horizontal arrows are union of closed immersions.
        \item If $\CY$ is $c$-shifted symplectic, then so is $\CL_{\tor} \CY$.
        \end{enumerate}
\end{corollary}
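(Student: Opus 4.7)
The plan is to derive each of the four properties directly from its counterpart in \Cref{prop-loop} by passing to the colimit along the transitions $\iota_{n, m}$, exploiting that these are open and closed immersions by \Cref{prop-loop} \cref{item-loop-op-cl}. Concretely, the filtered colimit is formed in derived Artin stacks so that each $\CL_n \CY$ sits inside $\CL_{\tor} \CY$ as an open and closed substack, realising $\CL_{\tor} \CY$ as the increasing union of the $\CL_n \CY$. Items (i) and (ii) then follow immediately: both ``locally of finite type'' and ``locally of finite presentation'' are local on a smooth cover, and the open substacks $\CL_n \CY$ form such a cover of $\CL_{\tor} \CY$, so these properties transfer from \Cref{prop-loop} \cref{item-loop-Artin,item-loop-lfp}.

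For (iii), I would first apply \cite[Lemma~4.14]{Alp13} to each open and closed immersion $\iota_{n, m}$ to conclude that the induced map $\bar{\iota}_{n, m} \colon \mathrm{L}_n Y \to \mathrm{L}_m Y$ between good moduli spaces is itself an open and closed immersion; this allows me to form the algebraic space $\mathrm{L}_{\tor} Y \coloneqq \colim_n \mathrm{L}_n Y$. The morphisms $p_n$ then assemble to a map $p_{\tor} \colon \CL_{\tor} \CY \to \mathrm{L}_{\tor} Y$, which is a good moduli morphism because this property is étale-local on the target. To produce the horizontal map $\iota$, I would observe that $\iota_m \circ \iota_{n, m} = \iota_n$ for $n \mid m$, since both sides correspond to precomposition with the factorisation $\mathrm{B} \BZ \to \mathrm{B}(\BZ / m \BZ) \to \mathrm{B}(\BZ / n \BZ)$ of the canonical surjection $\mathrm{B} \BZ \to \mathrm{B}(\BZ / n \BZ)$. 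The induced map $\iota \colon \CL_{\tor} \CY \to \CL \CY$ is then a union of closed immersions because each $\iota_n$ is one by \Cref{prop-loop} \cref{item-loop-cl-imm}, and the required commutativity of the square is automatic.

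For (iv), each $\CL_n \CY$ carries a canonical $c$-shifted symplectic form produced by AKSZ from an $\CO$-orientation of dimension $0$ on $\mathrm{B}(\BZ / n \BZ)$. I would choose these orientations compatibly with respect to the finite étale surjections $\mathrm{B}(\BZ / m \BZ) \to \mathrm{B}(\BZ / n \BZ)$ inducing $\iota_{n, m}$ (for instance via the standard normalisation by the group order), so that by the naturality of AKSZ the associated $c$-shifted symplectic forms agree under $\iota_{n, m}$ and glue to a $c$-shifted symplectic form on $\CL_{\tor} \CY$. The verification of this orientation-compatibility, together with the ensuing naturality of AKSZ with respect to finite étale maps of the source, is the only genuinely technical point, and I expect it to be the main obstacle; once settled, the rest is routine bookkeeping with filtered colimits of open and closed immersions.
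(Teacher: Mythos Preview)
Your proposal is correct and matches the paper's approach, which in fact gives no explicit proof and simply asserts that the corollary follows from \Cref{prop-loop}. The one technical point you flag in (iv)—compatibility of the AKSZ $c$-shifted symplectic forms under $\iota_{n,m}$—is not discussed by the paper either; it resolves without the group-order normalisation you suggest, since the natural $\CO$-orientation on each $\mathrm{B}(\BZ/n\BZ)$ is the identity on $\Gamma(\mathrm{B}(\BZ/n\BZ),\CO)\cong\BC$ and pullback along $\mathrm{B}(\BZ/m\BZ)\to\mathrm{B}(\BZ/n\BZ)$ intertwines these. (A minor aside: in (iii) the fact that $\bar{\iota}_{n,m}$ is open and closed is not really a consequence of \cite[Lemma~4.14]{Alp13} but of the more elementary observation that good moduli spaces commute with disjoint unions.)
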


For later use, we explicitly describe the torsion loop stack and the loop stack for quotient stacks:

\begin{lemma}\label{lem-Ltor-quotient}
    Let $R$ be a separated algebraic space equipped with an action of a reductive group $G$, and set $\CY = R/G$.
    \begin{enumerate}
        \item For a positive integer $n \in \mathbb{Z}_{>0}$, we have an identification
    \[
     (\CL_n \CY)_{\cl} \cong \coprod_{\substack{g \in G \\ g^n = e}} (R^{g})_{\cl} / \mathrm{C}_G(g) 
    \]
    where the disjoint union is over the set of $G$-conjugacy classes of $n$-torsion elements. \label{item-loopquotient-1}
    \item There exists a natural identification
    \[
     (\CL \CY)_{\cl} \cong \{ (r, g) \in R \times G \mid g \cdot r = r \} / G.
    \]  \label{item-loopquotient-2}
    \item 
    The natural map $(\CL_n \CY)_{\cl} \hookrightarrow (\CL \CY)_{\cl}$ over the component corresponding to $g \in G$ is given by
    \[
        (R^{g})_{\cl} / \mathrm{C}_G(g)  \cong \left( (R^{g})_{\cl} / \mathrm{C}_G(g) \right) \times \{ g \} \hookrightarrow  \{ (r, g) \in R \times G \mid g \cdot r = r \}.
    \] \label{item-loopquotient-3}
\end{enumerate}
\end{lemma}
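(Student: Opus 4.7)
The plan is to prove all three parts by a direct functor-of-points computation on classical truncations; in essence, this is the standard identification of the inertia (and $n$-inertia) stack of the quotient $\CY = R/G$. Throughout, for a test scheme $T$, a $T$-point of $\CY$ is a $G$-torsor $P \to T$ equipped with a $G$-equivariant morphism $f \colon P \to R$. A $T$-point of $(\CL \CY)_{\cl}$ is then a map $T \times \mathrm{B}\BZ \to \CY$, which is the same as a triple $(P, \sigma, f)$ where $(P, f)$ is as above and $\sigma$ is a $G$-equivariant automorphism of $P$ satisfying $f \circ \sigma = f$ (using that a $G$-torsor on $T \times \mathrm{B}\BZ$ is a $G$-torsor on $T$ equipped with a $\BZ$-action). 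Trivializing $P$ \'etale-locally, such a triple corresponds to a pair $(r, g) \colon T \to R \times G$ with $g \cdot r = r$, well-defined up to the gauge action $h \cdot (r, g) = (h \cdot r,\, h g h^{-1})$. This identification is functorial in $T$ and proves (ii); part (i) follows by the same computation with $\mathrm{B}\BZ$ replaced by $\mathrm{B}(\BZ/n\BZ)$, which imposes the additional constraint $\sigma^n = \id$, equivalently $g^n = e$.

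To deduce the decomposition in (i), I use the standard fact that in a reductive group $G$ over $\BC$ the set $\{g \in G : g^n = e\}$ is a finite disjoint union of $G$-conjugacy classes, since every such element is semisimple and hence Weyl-conjugate into the finite subgroup $\mu_n^{\rk G}$ of a fixed maximal torus. For each class $[g_0]$, I identify the corresponding substack via the morphism
\[
    \phi \colon (R^{g_0})_{\cl} \times G \to \{(r, g) : g \cdot r = r,\ g \in [g_0]\}, \quad (r', h) \mapsto (h \cdot r',\, h g_0 h^{-1}),
\]
which one verifies is a $\mathrm{C}_G(g_0)$-torsor (with $\mathrm{C}_G(g_0)$ acting by $c \cdot (r', h) = (c \cdot r',\, h c^{-1})$) and which intertwines the $G$-action on the target with left multiplication by $G$ on the $G$-factor of the source. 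Quotienting by the (free) $G$-action yields $(R^{g_0})_{\cl}$, on which the residual $\mathrm{C}_G(g_0)$-action is the restriction of the natural $G$-action; summing over classes gives the stated decomposition.

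Part (iii) is then a direct consequence of the functorial descriptions: $\iota_n$ is induced by the surjection $\BZ \twoheadrightarrow \BZ/n\BZ$, which on triples $(P, \sigma, f)$ simply forgets the torsion condition $\sigma^n = \id$. Under the identifications from (i) and (ii), the restriction of $\iota_n$ to the component indexed by $[g_0]$ sends $r \in (R^{g_0})_{\cl}$ to the pair $(r, g_0) \in \{(r, g) : g \cdot r = r\}$, which is precisely the inclusion stated in the lemma. I anticipate no serious obstacle: the argument rests entirely on standard computations (the inertia stack of a quotient stack and the structure of torsion elements in a complex reductive group), though one must keep careful track of the $G$- and $\mathrm{C}_G(g_0)$-actions in the component-by-component analysis for (i).
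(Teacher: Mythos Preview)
Your proof is correct and complete. The approach differs from the paper's in a minor but worth-noting way: the paper first treats the case $R=\mathrm{pt}$, invoking the cotangent complex computation of \cite[Theorem~3.3(2)]{fu2025hochschild} to see that $\Map(\mathrm{B}(\BZ/n\BZ),\mathrm{B}G)$ is a disjoint union of classifying stacks, then identifies the automorphism group at each point as $\mathrm{C}_G(g)$, and finally handles general $R$ by analysing the fibres of the forgetful map $\Map(\mathrm{B}(\BZ/n\BZ),R/G)\to\Map(\mathrm{B}(\BZ/n\BZ),\mathrm{B}G)$. You instead give the direct inertia-stack/functor-of-points computation, explicitly exhibiting the $\mathrm{C}_G(g_0)$-torsor $\phi$ to split off each conjugacy class. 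Your route is more elementary and self-contained (it avoids citing the external cotangent computation), while the paper's fibration argument is terser once that citation is granted. Both arguments ultimately rest on the same underlying fact that $n$-torsion elements in a complex reductive group fall into finitely many conjugacy classes.
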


\begin{proof}
    Since the proofs of \cref{item-loopquotient-2} and \cref{item-loopquotient-3} are obvious, we only prove \cref{item-loopquotient-1}.
    We first deal with the case when $R$ is a point.
    First, by adjunction, we have a natural morphism
    \[
      \Psi \colon \coprod_{\substack{ g \in G \\ g^n = e}} \mathrm{B} \mathrm{C}_G(g) \to \Map(\mathrm{B}(\mathbb{Z} / n \mathbb{Z}), \mathrm{B} G).
    \]
    We claim that this map is an isomorphism.
    First, it follows from the computation of the cotangent complex of the right-hand side in \cite[Theorem~3.3 (2)]{fu2025hochschild} that $\Map(\mathrm{B}(\mathbb{Z} / n \mathbb{Z}), \mathrm{B} G)$ is a disjoint union of classifying stacks of algebraic groups.
    Take a point $[g] \in \Map(\mathrm{B}(\mathbb{Z} / n \mathbb{Z}), \mathrm{B} G)$ corresponding to an element $g \in G$ with $g^n = e$.
    Then it is clear that the automorphism group of the point $[g]$ is the centralizer group $\mathrm{C}_G(g)$.
    Since each point in $\Map(\mathrm{B}(\mathbb{Z} / n \mathbb{Z}), \mathrm{B} G)$ corresponds to such an element in $G$, we conclude that $\Psi$ is an isomorphism.

    Next, we describe the classical truncation of the mapping stack $\Map(\mathrm{B}(\mathbb{Z} / n \mathbb{Z}), R/G)$.
    Consider the natural map
    \[
       q \colon \Map(\mathrm{B}(\mathbb{Z} / n \mathbb{Z}), R/G) \to \Map(\mathrm{B}(\mathbb{Z} / n \mathbb{Z}), \mathrm{B}G).
    \]
    For each point $[g] \in \Map(\mathrm{B}(\mathbb{Z} / n \mathbb{Z}), \mathrm{B}G)$ corresponding to an element $g \in G$, the fibre $q^{-1}([g])$ is identified with the fixed locus $R^{g}$ inside $R$.
    Hence we obtain the desired statement.
\end{proof}

\begin{corollary}\label{cor-torsionloop-HG}
    Let $C$ be a smooth projective curve with $\Sigma$ the underlying topological $2$-manifold and $G$ be a reductive group.
    Let $\CH_G$ be the moduli stack of $G$-Higgs bundles on $C$ and $\mathcal{L}\mathrm{oc}_G(\Sigma)$ be the moduli stack of $G$-local systems on $\Sigma$.
    Then there exists an isomorphism
    \[
     \CL_{\mathrm{tor}}  \CH_G \cong \coprod_{[g] \in  G_{\mathrm{tor}} / G} \mathcal{H}_{\mathrm{C}_G(g)}, \quad \CL_{\mathrm{tor}}  \mathcal{L}\mathrm{oc}_G(\Sigma) \cong \coprod_{[g] \in G_{\mathrm{tor}} / G} \mathcal{L}\mathrm{oc}_{\mathrm{C}_G(g)}(\Sigma).
    \]
    Here, we set $G_{\mathrm{tor}} \coloneqq \{g \in G \mid g^N = e  \textnormal{ for some $N \in \mathbb{Z}_{>0}$} \}$ where $G$ acts on $G_{\tor}$ by conjugation.
\end{corollary}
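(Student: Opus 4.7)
\emph{Proof proposal.} The plan is to reduce both isomorphisms to the mapping-stack computation already carried out in the proof of \Cref{lem-Ltor-quotient} for $R = \pt$, namely the identification
\[
\Map(\mathrm{B}(\mathbb{Z}/n\mathbb{Z}), \mathrm{B}G) \;\cong\; \coprod_{[g]:\, g^n=e} \mathrm{B}\mathrm{C}_G(g).
\]
To deploy this globally I would realise both moduli stacks as mapping stacks out of a connected source into $\mathrm{B}G$:
\[
\mathcal{L}\mathrm{oc}_G(\Sigma) \;=\; \Map(\Sigma, \mathrm{B}G), \qquad \CH_G \;=\; \Map(C_{\Dol}, \mathrm{B}G),
\]
where $C_{\Dol}$ denotes the Simpson-style Dolbeault shape of $C$ whose mapping stack into $\mathrm{B}G$ recovers the derived moduli stack of $G$-Higgs bundles.

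The key step is then the Fubini (exponential) law for mapping stacks: for $X \in \{\Sigma, C_{\Dol}\}$ one has
\[
\CL_n \Map(X, \mathrm{B}G) \;=\; \Map(\mathrm{B}(\mathbb{Z}/n\mathbb{Z}) \times X, \mathrm{B}G) \;=\; \Map\bigl(X, \Map(\mathrm{B}(\mathbb{Z}/n\mathbb{Z}), \mathrm{B}G)\bigr).
\]
Substituting the decomposition above and invoking connectedness of $X$ (so that $\Map(X,-)$ commutes with the finite coproduct) produces
\[
\CL_n \Map(X, \mathrm{B}G) \;\cong\; \coprod_{[g]:\, g^n = e} \Map(X, \mathrm{B}\mathrm{C}_G(g)),
\]
whose summands are precisely $\mathcal{L}\mathrm{oc}_{\mathrm{C}_G(g)}(\Sigma)$ or $\CH_{\mathrm{C}_G(g)}$ respectively. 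Passing to the filtered colimit over $n \in \mathbb{Z}_{>0}$ ordered by divisibility, and noting that $g \in G$ is torsion if and only if $g^n = e$ for some $n$, yields the desired decomposition indexed by $G_{\tor}/G$.

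The main obstacle I anticipate is the Higgs case: one must verify that $C_{\Dol}$ is connected in the appropriate derived sense for $\Map(C_{\Dol}, -)$ to commute with coproducts, and that the mapping-stack identification $\CH_G = \Map(C_{\Dol}, \mathrm{B}G)$ is functorial in $G$ so that replacing $G$ by $\mathrm{C}_G(g)$ genuinely produces $\CH_{\mathrm{C}_G(g)}$; for $\mathcal{L}\mathrm{oc}_G(\Sigma)$ both points are evident from the connectedness of $\Sigma$. If the derived mapping-stack presentation of $\CH_G$ turns out to be awkward, a fallback is to work smooth-locally on $\mathrm{Bun}_G$, where $\CH_G$ admits a local quotient-stack presentation, apply \Cref{lem-Ltor-quotient} directly, and glue the resulting decompositions along their canonical indexing by $G$-conjugacy classes of torsion elements.
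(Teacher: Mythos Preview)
Your approach is correct and is precisely the argument the paper has in mind: the corollary is stated without proof immediately after \Cref{lem-Ltor-quotient}, and the intended deduction is exactly the Fubini manoeuvre you describe, using the $R=\pt$ case of that lemma as the sole nontrivial input. Your worry about $C_{\Dol}$ is not serious: the canonical map $C \to C_{\Dol}$ is an effective epimorphism, so connectedness of $C$ forces $\Map(C_{\Dol},-)$ to preserve the finite coproduct, and the identification $\CH_H = \Map(C_{\Dol},\mathrm{B}H)$ is manifestly functorial in $H$.
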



We now discuss a version of the above theorem for the moduli stack of semistable $G$-Higgs bundles, denoted as $\CH_{G}^{\mathrm{ss}}$.
For this, we recall some results from \textcite{herrero2025stability}.
For a reductive group $G$ which is not necessarily connected, we define the set of rational cocharacters and characters by
\[
 \Gamma^{\mathbb{Q}}(G) \coloneqq \Hom(\mathbb{G}_{\mathrm{m}}, G)_{\mathbb{Q}}, \quad   \Gamma_{\mathbb{Q}}(G) \coloneqq \Hom(G, \mathbb{G}_{\mathrm{m}})_{\mathbb{Q}}.
\]
Here, the rationalization for the set of cocharacters makes sense even though it is not an abelian group since it is a $\mathbb{Z}$-set: see \cite[Notations and conventions]{herrero2025stability}.
A rational cocharacter $\lambda \in \Gamma^{\mathbb{Q}}(G)$ is called \emph{central} if it lies in $\Gamma^{\mathbb{Q}}(\mathrm{Z}(G))$.
It is shown in \cite[Theorem 2.3.2]{herrero2025stability} that there is a natural isomorphism $\Gamma_{\mathbb{Q}}(G) \cong \Gamma^{\mathbb{Q}}(\mathrm{Z}(G))^{\vee}$.

We introduce the set of rational degrees $\pi_1(G)_{\mathbb{Q}}$ following \cite[\S 4.2.4]{herrero2025stability}.
When $G$ is connected, it is simply defined as the rationalization of the fundamental group, or equivalently the dual of $\Gamma_{\mathbb{Q}}(G)$.
For disconnected groups, $\pi_1(G)_{\mathbb{Q}}$ is defined as the set of $\pi_0(G)$-conjugacy classes of pairs $(F, d)$ of a subgroup 
$F \subset \pi_0(G)$ and $d \in \Gamma_{\mathbb{Q}}(G^{F})$, where $G^{F}$ is the inverse image of $F$ under $G \to \pi_0(G)$.
The set of rational degrees is covariantly functorial with respect to group homomorphisms.
A rational degree $(F, d) \in \pi_1(G)_{\mathbb{Q}}$ is called \emph{zero-type} if $d = 0$.

In \cite[\S 4.2.5]{herrero2025stability}, it is explained that there is a map
\[
 \CH_G \to \pi_1(G)_{\mathbb{Q}},    
\]
which assigns to a $G$-Higgs bundle its  rational degree, compatibly with induction along group homomorphisms.
We define open substacks
\[
    \CH_{G, \textnormal{$0$-type}} \subset \CH_{G}, \quad \CH_{G, \textnormal{$0$-type}}^{\mathrm{ss}} \subset \CH_{G}^{\mathrm{ss}}
\]
consisting of $G$-Higgs bundles with zero-type rational degrees.
When $G^{\circ}$ is semisimple, the above inclusions are in fact equal.

With these preparations, we can state a semistable version of \Cref{cor-torsionloop-HG}.

\begin{proposition}\label{prop-torsionloop-HGss}
    Let $C$ be a smooth projective curve and $G$ be a reductive group.
    Then there is a natural isomorphism
    \[
        \CL_{\mathrm{tor}}  \CH_{G, \textnormal{$0$-type}}^{\mathrm{ss}} \cong \coprod_{[g] \in  G_{\mathrm{tor}} / G} \mathcal{H}_{\mathrm{C}_G(g), \textnormal{$0$-type}}^{\mathrm{ss}}.
    \]    
    In particular, if $G^{\circ}$ is semisimple, we have
    \[
        \CL_{\mathrm{tor}}  \CH_{G}^{\mathrm{ss}} \cong \coprod_{[g] \in  G_{\mathrm{tor}} / G} \mathcal{H}_{\mathrm{C}_G(g), \textnormal{$0$-type}}^{\mathrm{ss}}.
    \]
\end{proposition}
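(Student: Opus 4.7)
\medskip

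\noindent\emph{Proof plan.} The strategy is to bootstrap from \Cref{cor-torsionloop-HG}, which already provides the underlying decomposition $\CL_{\tor}\CH_G \cong \coprod_{[g] \in G_{\tor}/G}\CH_{\mathrm{C}_G(g)}$ at the level of the full Higgs stacks, and then to check that this isomorphism carries the open substack $\CL_{\tor}\CH_{G,\textnormal{$0$-type}}^{\mathrm{ss}}$ on the left precisely onto the coproduct of $\CH_{\mathrm{C}_G(g),\textnormal{$0$-type}}^{\mathrm{ss}}$ on the right. First I would verify that $\CL_{\tor}(-)$ is compatible with open immersions: each $\CL_n(-) = \Map(\mathrm{B}(\BZ/n\BZ),-)$ preserves open immersions of Artin stacks, and so does the filtered colimit along the $\iota_{n,m}$, so that $\CL_{\tor}\CH_{G,\textnormal{$0$-type}}^{\mathrm{ss}}$ is identified with an open substack of $\CL_{\tor}\CH_G$. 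It then suffices to match this open substack, component by component under the isomorphism of \Cref{cor-torsionloop-HG}, with $\coprod_{[g]}\CH_{\mathrm{C}_G(g),\textnormal{$0$-type}}^{\mathrm{ss}}$.

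\medskip

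Next, on the component indexed by a torsion class $[g]$, the isomorphism of \Cref{cor-torsionloop-HG} is the induction morphism $\mathrm{ind}^G_{\mathrm{C}_G(g)}\colon \CH_{\mathrm{C}_G(g)} \to \CH_G$ sending $E_g \mapsto E_g \times^{\mathrm{C}_G(g)} G$ equipped with the tautological $g$-automorphism; this is how an $\mathrm{C}_G(g)$-Higgs bundle gets promoted to a point of the torsion loop stack. So the proposition reduces to showing that, for each torsion $g \in G$, an $\mathrm{C}_G(g)$-Higgs bundle $E_g$ is semistable and of zero-type precisely when its induced $G$-Higgs bundle is. The zero-type compatibility is a formal consequence of the functoriality of $\pi_1(-)_{\BQ}$ on the inclusion $\mathrm{C}_G(g) \hookrightarrow G$ recorded in \textcite{herrero2025stability}: since $g$ is torsion, its centralizer contains a maximal torus of $G$, so the induced map $\pi_1(\mathrm{C}_G(g))_{\BQ} \to \pi_1(G)_{\BQ}$ is injective on the ``$d$''-component and sends zero-type to zero-type and reflects it.

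\medskip

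The core step is the equivalence of semistability, which is where I expect the main difficulty to lie. One direction is easy: a destabilizing reduction of $E_g$ to a parabolic $P \subset \mathrm{C}_G(g)$ induces a destabilizing reduction of the $G$-Higgs bundle $E_g \times^{\mathrm{C}_G(g)} G$ to the parabolic $P \cdot \mathrm{R}_{\mathrm{u}}(Q) \subset G$ obtained from any parabolic $Q \subset G$ with Levi containing $\mathrm{C}_G(g)$, and the zero-type assumption guarantees that slopes are compared correctly. For the converse, I would use the canonicity of the Harder--Narasimhan filtration for (possibly disconnected, possibly not-connected-centre) $G$-Higgs bundles established in \cite{herrero2025stability}: if the induced $G$-Higgs bundle were unstable, its canonical HN reduction to a parabolic $Q \subset G$ would be preserved by every automorphism, in particular by the tautological torsion automorphism $g$. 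Hence $g \in Q$, so $Q \cap \mathrm{C}_G(g)$ is a parabolic of $\mathrm{C}_G(g)$ and the HN reduction descends to a destabilizing parabolic reduction of $E_g$, contradicting its semistability. The zero-type hypothesis is exactly what allows one to run HN theory for possibly disconnected $G$ in this form.

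\medskip

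Finally, the ``in particular'' statement when $G^\circ$ is semisimple follows because for such $G$ the zero-type condition is automatic on every component of $\pi_1(G)_{\BQ}$ (as $\Gamma_{\BQ}(G^{\circ,F}) = 0$), and likewise for $\mathrm{C}_G(g)^\circ$, which is again semisimple. Thus $\CH_{G,\textnormal{$0$-type}}^{\mathrm{ss}} = \CH_G^{\mathrm{ss}}$ and similarly on the right, giving the stated formula. The expected main obstacle is the HN-descent argument in the converse direction, specifically the verification that one can run it in the disconnected/non-connected-centre setting; this is where I would have to carefully cite or adapt the stability machinery of \cite{herrero2025stability}.
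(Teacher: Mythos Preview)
Your overall strategy matches the paper's: reduce via \Cref{cor-torsionloop-HG} to three claims about a $\mathrm{C}_G(g)$-Higgs bundle $E$ and its induction to $G$, namely (i) zero-type is preserved and reflected, (ii) zero-type $+$ semistable $\Rightarrow$ induced bundle semistable, (iii) induced bundle semistable $\Rightarrow$ $E$ semistable. The differences lie in how you argue (ii) and (iii), and there is a genuine gap in your treatment of (iii).

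For (iii), your ``easy direction'', the construction you propose does not work: you want to send a destabilizing parabolic $P \subset \mathrm{C}_G(g)$ to the parabolic $P \cdot \mathrm{R}_{\mathrm{u}}(Q) \subset G$ for $Q$ a parabolic with Levi containing $\mathrm{C}_G(g)$. But when $g$ is quasi-isolated no proper such $Q$ exists, so $Q = G$ and your candidate is $P$ itself, which is typically \emph{not} a parabolic of $G$ (a Borel of $\mathrm{C}_G(g)$ generally is not a parabolic of $G$, since the simple roots of $\mathrm{C}_G(g)$ can involve the affine root). The paper instead proves (iii) by citing \cite[\S 4.4.8]{herrero2025stability}: since $\mathrm{Lie}(\mathrm{C}_G(g)) \subset \mathrm{Lie}(G)$ is a direct summand as a $\mathrm{C}_G(g)$-representation (it is the $1$-eigenspace of $\mathrm{Ad}(g)$), semistability descends along the induction. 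If you want a direct argument, the right parabolic of $G$ to use is $P_G(\lambda)$ for the cocharacter $\lambda$ defining $P = P_{\mathrm{C}_G(g)}(\lambda)$, together with a degree comparison; but this still needs the zero-type hypothesis and is not as clean as the Lie-algebra criterion.

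For (ii), your ``converse'', your HN-canonicity argument is a legitimate alternative to the paper's method. The paper instead observes that zero-type degrees are \emph{adapted} in the sense of \cite[\S 4.2.9]{herrero2025stability}, and then invokes \cite[Theorem 4.4.1]{herrero2025stability} directly. Your route is more hands-on and would work once you verify HN theory in the disconnected setting and that $Q \cap \mathrm{C}_G(g)$ is a parabolic of $\mathrm{C}_G(g)$ when $g \in Q$; the paper's route avoids this by outsourcing to the cited reference.

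Finally, in your ``in particular'' paragraph, the claim that $\mathrm{C}_G(g)^{\circ}$ is again semisimple is false (take $G = \mathrm{SL}_2$ and $g$ a non-central torsion diagonal element: the centralizer is a torus). Fortunately this claim is also unnecessary: the displayed formula in the statement keeps the zero-type condition on the right-hand side, so you only need $\CH_{G,\textnormal{$0$-type}}^{\mathrm{ss}} = \CH_G^{\mathrm{ss}}$ on the left, which is exactly what $G^{\circ}$ semisimple gives.
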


\begin{proof}
    Using \Cref{cor-torsionloop-HG}, it suffices to prove the following three claims for principal $\mathrm{C}_G(g)$-bundles $E$:
    \begin{enumerate}
        \item  $E$ is zero-type if and only if its induction $E \times^{\mathrm{C}_{G}(g)} G$ is zero-type. \label{item-zero-type}
        \item  If $E$ is zero-type and semistable, then its induction $E \times^{\mathrm{C}_{G}(g)} G$ is semistable. \label{item-induction-mo-so}
        \item  If the induction $E \times^{\mathrm{C}_{G}(g)} G$ is semistable, then $E$ is semistable. \label{item-induction-kara-shitagau}
    \end{enumerate}
    The claim \Cref{item-zero-type} is obvious since the map $\mathrm{C}_G(g) \to G$ is injective.
    To prove \Cref{item-induction-mo-so}, recall from \cite[\S 4.2.9]{herrero2025stability} that a rational degree $[F, d] \in \pi_1(\mathrm{C}_G(g))_{\mathbb{Q}}$ is said to be \emph{adapted} if the image of $d$ under the map
    \[
     \Gamma_{\mathbb{Q}}(\mathrm{C}_G(g)^{F})^{\vee} \cong \Gamma^{\mathbb{Q}}(\mathrm{Z}(\mathrm{C}_G(g)^{F})) \to  \Gamma^{\mathbb{Q}}(G^{\iota(F)})
    \]
    is a central cocharacter, where $\iota \colon \pi_0(\mathrm{C}_G(g)) \to \pi_0(G)$ is the natural map.
    Obviously, a zero-type degree for $\mathrm{C}_G(g)$ is adapted.
    It is shown in \cite[Theorem 4.4.1]{herrero2025stability} that the induction of semistable bundles of adapted degrees are again semistable, which implies the desired result.
    The claim \Cref{item-induction-kara-shitagau} follows from \cite[\S 4.4.8]{herrero2025stability} together with the fact that the subspace
    \[
     \mathrm{Lie}(\mathrm{C}_G(g)) \subset \mathrm{Lie}(G)    
    \]
    is a direct summand as a $\mathrm{C}_G(g)$-representation; see also \cite[\S 4.4.9]{herrero2025stability} for a similar argument.

\end{proof}

\section{BPS sheaves}

In this section, we recall the definition of the \emph{BPS sheaves} for $(-1)$-shifted symplectic stacks and $0$-shifted symplectic stacks following \cite[\S 7]{bu2025cohomology}.
For $G$-character stacks of $3$-manifolds, it provides an extension of the perverse sheaf on the stable locus defined by Abouzaid--Manolescu \cite{abouzaid2020sheaf} to the entire moduli space.
For the moduli space of $G$-Higgs bundles and $G$-local systems on surfaces, it provides a modification of the constant sheaf and the intersection complex along the singularities, which enables to extend conjectures and theorems in the non-abelian Hodge theory from the coprimary components for type A groups to all components for arbitrary reductive groups: see \Cref{conj-stringy-BPS} for how it provides a formulation of the topological mirror symmetry conjecture.

Let $\CX$ be a $(-1)$-shifted symplectic stack equipped with an orientation, i.e., a choice of a $\mathbb{Z} / 2\mathbb{Z}$-graded line bundle $\CL$ on $\CX$ together with an isomorphism $o \colon \CL^{\otimes 2} \cong \det(\mathbb{L}_{\mathcal{X}})$.
For such data, \textcite[Theorem 4.8]{BBBBJ15} construct a monodromic mixed Hodge module
\[
 \varphi = \varphi_{\CX}  \in \MMHM(\CX).
\]
by gluing vanishing cycle complexes.
See \cite[\S 5]{bu2025cohomology} for the definition of the monodromic mixed Hodge modules on stacks and for further background.
By abuse of notation, we also denote by $\varphi_{\CX}$ its underlying perverse sheaf.

Now assume that the following conditions are satisfied:
\begin{enumerate}
    \item $\CX$ admits a good moduli space $p \colon \CX \to X$.
    \item $\CX$ is almost symmetric, i.e., for each closed point $x \in \CX$, the action of the neutral component of the stabilizer group $G_x^{\circ}$ on the tangent space $\mathrm{H}^0(\mathbb{T}_{\CX, x})$ is self-dual.
\end{enumerate}
For an integer $c \in \mathbb{Z}_{\geq 0}$, we define the \emph{$c$-th BPS sheaf} by
\[
 \mathcal{BPS}_{X}^{(c)} \coloneqq {}^{\mathrm{p}}{\CH}^{c}(p_* \varphi_{\CX}) \in \MMHM(X).
\]
Here ${}^{\mathrm{p}}{\CH}^{c}(-)$ denotes the $c$-th perverse cohomology sheaf. 
We now state an important vanishing result.

\begin{proposition}[{\cite[Proposition 7.2.9]{bu2025cohomology}}]\label{prop-supp-lem}
    \begin{enumerate}
        \item For $c < 0$, we have $\mathcal{BPS}_{X}^{(c)} = 0$.
        \item Let $x \in \CX$ be a closed point and $\bar{x} \in X$ be its image. Assume that there exists a subtorus $T \subset \mathrm{Z}(G_x^{\circ})$ such that the induced $T$-action on $\mathrm{H}^0(\mathbb{T}_{\CX, x})$ is trivial.
               Then, for $c < \dim T$, we have $\mathcal{BPS}_{X}^{(c)} |_{\bar{x}} = 0$.
    \end{enumerate}
\end{proposition}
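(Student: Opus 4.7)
Both assertions are \'etale-local on $X$ near $\bar x$, so the plan is to first reduce to a standard local model and then argue. I would combine Joyce's Darboux theorem for $(-1)$-shifted symplectic stacks \cite{BBBBJ15} with the Alper--Hall--Rydh \'etale slice theorem (compatibly with the orientation) to pass, after \'etale localization of $X$ through $\bar x$, to the model $\CX \cong [\Crit(W)/G]$, where $G = G_x^\circ$ is connected reductive, $V$ is a self-dual $G$-representation, and $W\colon V \to \mathbb{A}^1$ is a $G$-invariant function with vanishing Hessian at $0$ (choosing $V$ as a miniversal slice we arrange $V = \mathrm{H}^0(\mathbb{T}_{\CX,x})$). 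In this model $p$ becomes the good-moduli map $[\Crit(W)/G] \to \Crit(W)\cms G$ and $\varphi_{\CX}$ is the descent of the classical vanishing cycle perverse sheaf $\varphi_W$.

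Part (i) would then follow from a stacky form of Artin's affine vanishing: the map $\Crit(W) \to \Crit(W)\cms G$ is an affine morphism of affine schemes, whose pushforward is right t-exact for the perverse t-structure, and combining this with smooth descent along the atlas $\Crit(W) \to [\Crit(W)/G]$ (together with the fact that $p$ is cohomologically affine) yields ${}^p\mathcal{H}^c(p_*\varphi_{\CX}) = 0$ for $c < 0$.

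For part (ii), I would use the hypothesis that the central subtorus $T \subset \mathrm{Z}(G)$ acts trivially on $V$. After passing to a finite \'etale cover of $X$ over $\bar x$ (harmless for a stalk vanishing) we may assume $G \cong T \times H$, so that $V$ is an $H$-representation with trivial $T$-action, $W$ is $H$-invariant, and there is a splitting of derived stacks
\[
    \CX \cong [\Crit(W)/H] \times \mathrm{B}T \eqqcolon \CX' \times \mathrm{B}T, \qquad X \cong X' \times \mathrm{pt}, \qquad p \cong p' \times \pi_T.
\]
Under this splitting the $(-1)$-shifted symplectic structure on $\CX$ should decompose as the structure on $\CX'$ plus the canonical $(-1)$-shifted cotangent structure on $\mathrm{T}^*[-1]\mathrm{B}T$, and the additive dimensional reduction \eqref{eq-additive-dim-red} applied to the $\mathrm{B}T$-factor gives $\varphi_{\mathrm{T}^*[-1]\mathrm{B}T} \cong \mathbb{Q}_{\mathrm{B}T}[-\dim T]$, whence
\[
    \varphi_{\CX} \cong \varphi_{\CX'} \boxtimes \mathbb{Q}_{\mathrm{B}T}[-\dim T].
\]
Pushing forward and applying K\"unneth, $p_*\varphi_{\CX} \cong p'_*\varphi_{\CX'} \boxtimes \mathrm{H}^*(\mathrm{B}T)[-\dim T]$, in which the second factor is concentrated in cohomological (hence, over $\mathrm{pt}$, perverse) degrees $\geq \dim T$. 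Restricting to the stalk at $\bar x = (\bar x', \mathrm{pt})$, the K\"unneth decomposition for perverse cohomology reads
\[
    \BPS^{(c)}_X\big|_{\bar x} \cong \bigoplus_{a+b=c} {}^p\mathcal{H}^a(p'_*\varphi_{\CX'})\big|_{\bar x'} \otimes {}^p\mathcal{H}^b\bigl(\pi_{T,*}\mathbb{Q}_{\mathrm{B}T}[-\dim T]\bigr);
\]
by part (i) applied to $\CX'$ the first factor is zero for $a < 0$, and by the above amplitude the second factor is zero for $b < \dim T$, so every summand vanishes whenever $c < \dim T$.

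The main obstacle I anticipate is justifying the factorization of the DT sheaf: while the underlying derived stack splits cleanly as $\CX' \times \mathrm{B}T$, one must verify that the $(-1)$-shifted symplectic form decomposes accordingly (identifying the $\mathrm{B}T$-summand as $\mathrm{T}^*[-1]\mathrm{B}T$) and that the orientation on $\CX$ matches the product orientation, so that the BBBBJ vanishing cycle construction respects the splitting --- this is essentially the shifted symplectic rigidification of Park--You mentioned in the acknowledgements, and is the technical core of the argument.
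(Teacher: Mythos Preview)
The paper does not give its own proof of this proposition: it is imported verbatim from \cite[Proposition 7.2.9]{bu2025cohomology}, so there is no in-paper argument to compare against. I will therefore assess your proposal on its own terms.

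\medskip

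\textbf{Part (ii).} Your reduction to a product $\CX' \times \mathrm{B}T$ and the K\"unneth computation is essentially the right strategy, and you correctly flag the factorization of the $(-1)$-shifted symplectic structure and orientation as the technical crux. One small correction: the splitting $G \cong T \times H$ is a group-theoretic statement about the stabilizer and cannot be arranged by passing to a finite \'etale cover of $X$. What you actually have is that $[\Crit(W)/G] \to [\Crit(W)/(G/T)]$ is a $\mathrm{B}T$-gerbe (since $T$ is central and acts trivially on $V$); since $T$ is connected this gerbe is neutral for constructible sheaves, and you can run the projection-formula argument directly without ever splitting $G$.

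\medskip

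\textbf{Part (i).} Here there is a genuine gap. Artin vanishing for the affine morphism $\tilde p\colon \Crit(W) \to \Crit(W)\GIT G$ gives that $\tilde p_*$ is \emph{right} $t$-exact, hence $\tilde p_*\varphi_W \in {}^{\mathrm p}D^{\le 0}$. But the assertion you want is $p_*\varphi_{\CX} \in {}^{\mathrm p}D^{\ge 0}$, i.e.\ a \emph{lower} bound, and for the stacky map $p$ rather than $\tilde p$. Smooth descent along $q\colon \Crit(W)\to [\Crit(W)/G]$ and the cohomological affineness of $p$ (a statement about quasi-coherent, not constructible, sheaves) do not bridge this: $p_*\varphi_{\CX}$ is an equivariant pushforward built from $\mathrm{H}^*(\mathrm{B}G)$ and is not a summand of $\tilde p_*\varphi_W$ or any shift thereof.

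To see concretely that your argument cannot be using enough, take $G=\mathbb{G}_{\mathrm m}$ acting on $V=\mathbb{A}^2$ with weights $(1,1)$ and $W=0$. Your Artin-vanishing step applies verbatim, yet
\[
p_*\bigl(\mathbb{Q}_{[\mathbb{A}^2/\mathbb{G}_{\mathrm m}]}[1]\bigr)\;=\;\mathrm{H}^*_{\mathbb{G}_{\mathrm m}}(\mathbb{A}^2)[1]
\]
has nonzero ${}^{\mathrm p}\mathcal H^{-1}$. This $V$ is not self-dual, so it is not a counterexample to the proposition --- but that is exactly the point: any valid proof of (i) must use the almost-symmetric hypothesis, and yours does not. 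The argument in \cite{bu2025cohomology} (and in Davison's earlier integrality work) instead passes through a fibre-dimension estimate for $p$ in the symmetric local model, or equivalently a $\Theta$-stratification/semismallness argument, and this is the missing ingredient.
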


Next, we recall the definition of the BPS sheaf associated with a $0$-shifted symplectic stack $\CY$.
To do this, we recall the notion of the central rank from \cite[\S 4.1.3]{bu2025intrinsic}. 
We say that $\CY$ has central rank $\geq n$ if there exists an action of $\mathrm{B} \mathbb{G}_{\mathrm{m}}^n$ on $\CY$ which does not factor through $\mathrm{B} \mathbb{G}_{\mathrm{m}}^{n'}$ for any $n' < n$.
The stack $\CY$ has central rank $n$ if it has central rank $\geq n$ but not central rank $\geq n + 1$.

Now assume that $\CY$ has central rank $c$ and admits a good moduli space $p \colon \CY \to Y$.
Set 
\[\hat{\CY} = \mathrm{T}^*[-1] \CY \coloneqq \mathrm{Tot}_{\CY}(\mathbb{L}_{\CY}[-1])
\]
equipped with a natural $(-1)$-shifted symplectic structure (see \cite[Theorem 2.2]{Cal19}) and standard orientation (see \cite[\S 6.1.4]{bu2025cohomology}).
Consider the following diagram:
\[\begin{tikzcd}
	{\hat{\CY}} & {\mathcal{Y}} \\
	\hat{Y} & {Y}
	\arrow["\pi", from=1-1, to=1-2]
	\arrow["{\tilde{p}}"', from=1-1, to=2-1]
	\arrow["p", from=1-2, to=2-2]
	\arrow["{\bar{\pi}}"', from=2-1, to=2-2]
\end{tikzcd}\]
where the vertical arrows are good moduli morphisms and the horizontal arrows are natural projections.
It is shown in \cite[\S 7.2.14]{bu2025cohomology} that there exists a natural closed immersion
\[
   i \colon \mathbb{A}^{c} \times Y \hookrightarrow \hat{Y}
\]
and that the BPS sheaf $\mathcal{BPS}^{(c)}_{\hat{Y}}$ is of the form
\[
  \mathcal{BPS}_{ \hat{Y}}^{(c)} \cong i_* (\mathcal{IC}_{\mathbb{A}^c} \boxtimes \mathcal{BPS}_Y^{(c)}) 
\]
for some pure Hodge module $\mathcal{BPS}^{(c)}_Y \in \MHM(Y)$. The Hodge module $\mathcal{BPS}^{(c)}_Y$ is called the \emph{$c$-th BPS sheaf} of $\CY$.

\section{Multiplicative dimensional reduction}

\subsection{Multiplicative dimensional reduction: central rank zero}

Here, we prove the multiplicative dimensional reduction in the case of central rank zero.
Let $\CY$ be a $0$-shifted symplectic stack with a good moduli space $p \colon \CY \to Y$.
Consider the loop stack $\CL \CY$ equipped with its natural $(-1)$-shifted symplectic structure.
Let $r \colon \CL \CY \to \CY$ be the natural map.
Then there exists a natural fibre sequence
\[
    r^* \mathbb{L}_{\CY} \to \mathbb{L}_{\CL \CY} \to  r^* \mathbb{L}_{\CY}[1],
\]
which induces a natural trivialization 
\begin{equation}\label{eq-loop-trivialization}
    \det(\mathbb{L}_{\CL \CY}) \cong \CO_{\CL \CY}.
\end{equation}
In particular, $\CL \CY$ is naturally equipped with an orientation, which we call the \emph{standard orientation}.
Our main result is the following:

\begin{theorem}\label{thm-main-0}
    Consider the natural map $\bar{\iota} \colon \mathrm{L}_{\tor} Y \to \mathrm{L}Y$.
    Then there is a natural isomorphism
    \[
        \mathcal{BPS}^{(0)}_{\mathrm{L} Y} \cong \bar{\iota}_* \mathcal{BPS}^{(0)}_{\mathrm{L}_{\mathrm{tor}} Y}.
    \]
\end{theorem}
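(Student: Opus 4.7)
The argument proceeds in two complementary steps: (A) show that $\BPS^{(0)}_{\mathrm{L}Y}$ is supported on the closed substack $\bar\iota(\mathrm{L}_{\tor}Y) \subseteq \mathrm{L}Y$, and (B) produce, étale-locally around $\CL_{\tor}\CY$, an equivalence of oriented $(-1)$-shifted symplectic stacks $\CL\CY \simeq \mathrm{T}^*[-1]\CL_{\tor}\CY$ under which $\iota \colon \CL_{\tor}\CY \hookrightarrow \CL\CY$ corresponds to the zero section. Granting (A) and (B), the DT perverse sheaves match under the equivalence, so pushing forward to good moduli spaces and taking $\Hp^0$, together with the formula of Section~3 (which, because $\CL_{\tor}\CY$ has central rank $0$ in this subsection, identifies the $0$-th BPS sheaf of $\widehat{\mathrm{L}_{\tor}Y}$ with the zero-section pushforward of $\BPS^{(0)}_{\mathrm{L}_{\tor}Y}$), yields $\bar\iota^*\BPS^{(0)}_{\mathrm{L}Y} \cong \BPS^{(0)}_{\mathrm{L}_{\tor}Y}$. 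Combined with (A), this gives the theorem.

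\emph{Step (A).} Take a closed point of $\CL\CY$ represented by $(y,g)$ with $y$ a closed point of $\CY$ of stabilizer $G_y$ and $g \in G_y$ of infinite order; its stabilizer in $\CL\CY$ is $\mathrm{C}_{G_y}(g)$. Let $S$ be the identity component of the Zariski closure $\overline{\langle g \rangle}$. Then $S$ is a positive-dimensional torus, and as every element commuting with $g$ automatically commutes with $\overline{\langle g \rangle}$, $S$ is central in $\mathrm{C}_{G_y}(g)$. Set $N = |\overline{\langle g \rangle}/S|$; then $g^N \in S$ is Zariski-dense there, so any non-trivial character $\chi$ of $S$ satisfies $\chi(g^N) \neq 1$. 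The fibre sequence $r^*\mathbb{L}_{\CY} \to \mathbb{L}_{\CL\CY} \to r^*\mathbb{L}_{\CY}[1]$ exhibits $\mathrm{H}^0(\mathbb{T}_{\CL\CY,(y,g)})$ as an extension of $\ker(\id - g_*)$ on $\mathrm{H}^0(\mathbb{T}_{\CY,y})$ by $\coker(\id - g_*)$ on $\Lie G_y$, with connecting map the monodromy defect $\id - g_*$. The factorization $\id - g_*^N = (\id - g_*)(\id + g_* + \cdots + g_*^{N-1})$ together with invertibility of $\id - g_*^N = (1 - \chi(g^N))\id$ on every non-trivial $S$-weight space forces $\id - g_*$ to be invertible on such weight spaces; hence both kernel and cokernel are $S$-fixed, $S$ acts trivially on $\mathrm{H}^0(\mathbb{T}_{\CL\CY,(y,g)})$, and \Cref{prop-supp-lem}(ii) gives $\BPS^{(0)}_{\mathrm{L}Y}|_{[y,g]} = 0$.

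\emph{Step (B).} I would apply the Alper--Hall--Rydh local structure theorem to reduce to $\CY = R/H$ with $H = G_y$ and $y$ the origin, where \Cref{lem-Ltor-quotient} supplies explicit quotient presentations of both $\CL\CY$ and $\CL_{\tor}\CY$. Near a torsion $g \in H$, parametrising nearby elements of $\mathrm{C}_H(g)$ as $g\exp(\xi)$ with $\xi \in \mathfrak{c}_H(g)$, the derived equation $g\exp(\xi)\cdot r = r$ restricted to the fixed-locus slice $R^g$ reduces, to the required derived order, to the moment-type equation $\xi \cdot r = 0$ in $R^g$. Self-dualising $\mathfrak{c}_H(g)$ via the $0$-shifted symplectic pairing inherited from $\CY$ identifies this with the defining equation of $\mathrm{T}^*[-1](R^g/\mathrm{C}_H(g))$, i.e.\ of $\mathrm{T}^*[-1]\CL_{\tor}\CY$ on the component indexed by $g$. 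It remains to verify that the equivalence carries the AKSZ $(-1)$-shifted symplectic form of $\CL\CY$ to the canonical form on the shifted cotangent, and that the trivialization \eqref{eq-loop-trivialization} corresponds to the standard orientation on $\mathrm{T}^*[-1]$; both follow from AKSZ compatibility along the factorization $S^1 = \mathrm{B}\mathbb{Z} \to \mathrm{B}(\mathbb{Z}/n)$ together with the description of shifted-cotangent orientations in \cite[\S 6.1.4]{bu2025cohomology}.

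\emph{Main obstacle.} The delicate step is (B): upgrading the familiar classical Luna-slice model of a torsion loop to a \emph{derived} equivalence that is moreover \emph{symplectic} and \emph{orientation-preserving}. A natural route is via the shifted-symplectic rigidification of Park--You \cite{ParkYou2025_shiftedSymplecticrigidification}, or via a direct Koszul-style calculation on the quotient model of \Cref{lem-Ltor-quotient}; once such an equivalence is granted, the descent to good moduli spaces and the identification of BPS sheaves are routine.
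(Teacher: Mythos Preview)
Your Step~(A) is correct and essentially identical to the paper's \Cref{prop-support}.

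Step~(B), however, contains a genuine gap, and the paper proceeds differently. Your central claim is that on the slice $R^g$ the derived equation $g\exp(\xi)\cdot r = r$ ``reduces, to the required derived order, to the moment-type equation $\xi\cdot r = 0$''. It does not: on $R^g$ one has $g\cdot r = r$, so the equation becomes $\exp(\xi)\cdot r = r$, which is the full loop equation for $R^g/\mathrm{C}_H(g)$ near the identity. The linear equation $\xi\cdot r = 0$ cuts out only the \emph{normal cone} of this locus. In other words, $\CL\CY$ and $\mathrm{T}^*[-1]\CL_{\tor}\CY$ are related by deformation to the normal cone, not by an \'etale-local derived equivalence; your Koszul linearisation computes the special fibre of that deformation, not the general one. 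The routes you suggest (Park--You rigidification, a direct Koszul calculation) do not obviously bridge this, and you flag the step as an obstacle without resolving it.

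The paper confronts exactly this point. It builds the deformation explicitly as a relative $(-1)$-shifted symplectic family $\tilde{\CL}\CY \to \mathbb{A}^1$ with general fibre $\CL\CY$ and special fibre $\hat\CY = \mathrm{T}^*[-1]\CY$, identified with the deformation space $\mathcal{D}_{\CY/\CL\CY}$ (\Cref{lem-tildeL-good-moduli}, \Cref{prop-relative-symp-ori}). The substitute for your missing equivalence is the formal \'etaleness of the unipotent-loop map $\CL^{\mathrm{u}}\CY \times \mathbb{A}^1 \to \tilde{\CL}\CY$, which is used to show that the family is \'etale-locally constant at the level of classical truncations with d-critical structure and orientation (\Cref{lem-locally-isotrivial}); this is all one needs for DT perverse sheaves. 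The comparison $\BPS^{(0)}_{\mathrm{L}Y}|_Y \cong \BPS^{(0)}_{\hat Y}|_Y$ is then obtained as a \emph{specialisation map} along the family (\Cref{prop-compare-BPS}), rather than by pulling back across an equivalence. Finally, rather than working at a general torsion loop directly, the paper reduces to the constant-loop case via the \'etale, symplectic, orientation-preserving comparison $\eta_n\colon \CL\CL_n\CY \to \CL\CY$ (\Cref{lem-eta-properties}), applying \Cref{prop-compare-BPS} with $\CL_n\CY$ in place of $\CY$.
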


We first compare the support of the both sides.

\begin{proposition}\label{prop-support}
    Consider the natural map $\bar{\iota} \colon \mathrm{L}_{\tor} Y \to \mathrm{L}Y$.
    The support of $\mathcal{BPS}^{(0)}_{\mathrm{L} Y}$ is contained in the image of $\bar{\iota}$.
\end{proposition}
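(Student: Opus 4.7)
The plan is to apply \Cref{prop-supp-lem}(ii) with $c=0$. Since $\mathcal{BPS}^{(0)}_{\mathrm{L}Y}$ is constructible, it suffices to show that its stalk vanishes at every closed point $\bar y\in \mathrm{L}Y$ outside the image of $\bar\iota$. Fix such a $\bar y$ and let $x\in \CL\CY$ be the unique closed preimage, with reductive stabilizer $G_x$. By the Alper--Hall--Rydh local structure theorem combined with \Cref{lem-Ltor-quotient}, I can present a suitable étale neighbourhood of $\CY$ as a quotient $[R/G]$ with $R$ a separated algebraic space, $G$ reductive, and $x$ corresponding to a pair $(r,g)$ with $g\cdot r=r$ and $G_x=\mathrm{Stab}_G(r)\cap \mathrm{C}_G(g)$. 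By \Cref{lem-Ltor-quotient}(iii), the hypothesis $\bar y\notin \bar\iota(\mathrm{L}_{\tor}Y)$ translates into the statement that $g$ is of infinite order in $G$.

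Next, I would argue that at the closed point $x$ the element $g$ must be semisimple. Write $g=g_s g_u$ for the Jordan decomposition in $G$, and assume for contradiction that $g_u\ne e$. A Jacobson--Morozov cocharacter $\lambda\colon \mathbb{G}_\rmm\to \mathrm{C}_G(g_s)$ contracts $g_u$, in the sense that $\lambda(t)g_u\lambda(t)^{-1}\to e$ as $t\to 0$. In the affine local slice of $\CL\CY$ at $x$ (where $r$ sits at an $H$-fixed origin for some $H\subseteq G_x$), the $\lambda$-orbit of $(r,g)$ then converges to a point of the form $(r_0,g_s)$, and $(r_0,g_s)$ cannot be $G$-conjugate to $(r,g)$ because conjugation preserves Jordan decomposition. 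This contradicts the closedness of the orbit of $x$ in $\CL\CY$. Hence $g$ is semisimple of infinite order, and $T\coloneqq \overline{\langle g\rangle}^{\mathrm{Zar},\circ}\subseteq G$ is a subtorus of positive dimension.

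It remains to verify the hypotheses of \Cref{prop-supp-lem}(ii) for this $T$. The Zariski-closed subgroup $\mathrm{Stab}_G(r)\cap \mathrm{C}_G(g)$ contains $\langle g\rangle$ and therefore its Zariski closure, whence $T\subseteq G_x$. Any $h\in G_x$ centralizes $g$, so conjugation by $h$ acts as the identity on $\langle g\rangle$ and, by continuity, on $\overline{\langle g\rangle}^{\mathrm{Zar}}$; combined with the connectedness of $T$, this gives $T\subseteq \mathrm{Z}(G_x^\circ)$. For the triviality of the $T$-action on the tangent space, the loop/AKSZ description of $\mathbb{T}_{\CL\CY}$ at $x$ yields a canonical decomposition
\[
\mathrm{H}^0(\mathbb{T}_{\CL\CY,x}) \;\cong\; \mathrm{H}^0(\mathbb{T}_{\CY,r})^{g}\oplus \mathrm{H}^{-1}(\mathbb{T}_{\CY,r})_{g},
\]
on which $g$ manifestly acts as the identity ($g$-invariants and $g$-coinvariants, respectively). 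By Zariski density of $\langle g\rangle$ inside $\overline{\langle g\rangle}^{\mathrm{Zar}}$, every element of $T$ also acts trivially. Applying \Cref{prop-supp-lem}(ii) with $c=0<\dim T$ then yields $\mathcal{BPS}^{(0)}_{\mathrm{L}Y}|_{\bar y}=0$, completing the argument.

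The main technical obstacle I anticipate is the semisimplicity reduction: executing the Jacobson--Morozov contraction inside a local presentation of $\CL\CY$ where the $\lambda$-orbit genuinely has a limit above $x$, and relating this contraction to non-closedness in the global $\CL\CY$, requires carefully combining the étale local structure theorem with the explicit quotient description of \Cref{lem-Ltor-quotient}. The remaining steps---the inclusion $T\subseteq \mathrm{Z}(G_x^\circ)$ and the triviality of the $T$-action on $\mathrm{H}^0(\mathbb{T}_{\CL\CY,x})$---are formal once the AKSZ tangent decomposition and its $G_x^\circ$-equivariance are in hand, but these structures deserve a careful check in the shifted symplectic setting.
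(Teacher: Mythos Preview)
Your overall strategy coincides with the paper's: exhibit a positive-dimensional torus $T\subset \mathrm{Z}(G_x^\circ)$ acting trivially on $\mathrm{H}^0(\mathbb{T}_{\CL\CY,x})$ and invoke \Cref{prop-supp-lem}. Two points deserve revision.

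First, the semisimplicity of $g$ --- which you single out as the main obstacle --- is immediate once you observe that the image $y\in\CY$ of $x$ is itself closed (the paper cites \cite[Lemma~4.3.13]{bu2025cohomology} for this; you use it implicitly when applying the local structure theorem to $\CY$ at $y$, but never state it). Then $G_y$ is reductive, $x$ corresponds to $g\in G_y$, and $G_x\cong \mathrm{C}_{G_y}(g)$ is reductive because $x$ is closed. Since $g$ lies in the centre of $\mathrm{C}_{G_y}(g)$ and the centre of a reductive group consists of semisimple elements, $g$ is semisimple. This replaces your Jacobson--Morozov contraction entirely.

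Second, your displayed ``direct sum'' is really only an extension
\[
0\to \mathrm{H}^{-1}(\mathbb{T}_{\CY,y})_g \to \mathrm{H}^0(\mathbb{T}_{\CL\CY,x}) \to \mathrm{H}^0(\mathbb{T}_{\CY,y})^g \to 0
\]
coming from the long exact sequence for the fibre sequence $r^*\mathbb{T}_{\CY}[-1]\to\mathbb{T}_{\CL\CY}\to r^*\mathbb{T}_{\CY}$. Triviality of the $g$-action on sub and quotient does not a priori give triviality on the middle term; you need that $g$ acts semisimply (which you have, once the previous paragraph is in place) so that the extension splits $\langle g\rangle$-equivariantly. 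The paper argues this slightly differently, identifying the two pieces as $\mathfrak{g}_x$ and a space annihilated by $g-\mathrm{id}$, but the substance is the same.
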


\begin{proof}
    Take a point $\bar{x} \in \mathrm{L}Y$ which is not contained in the image of $\bar{\iota}$ and let $x \in \CL \CY$ be the corresponding closed point.
    Let $y \in \CY$ be the image of $x$ under the map $\CL \CY \to \CY$. Then it follows from \cite[Lemma 4.3.13]{bu2025cohomology} that $y$ is also closed.
    In particular, the stabilizer group $G_y$ is reductive, and $x$ is identified with an element $g \in G_y$ determined up to conjugacy.
    Since $x$ is closed, the centralizer $\mathrm{C}_{G_y}(g) \cong G_x$ is reductive. Since $g$ is contained in the center of $\mathrm{C}_{G_y}(g)$, it is semisimple.
    Since $x$ is not contained in the image of $\iota$, we see that $g$ is not torsion.
    In particular, the closure
    \[
     \overline{\{  g^n \mid n \in \mathbb{Z} \}} \subset G_y
    \]
    contains a positive-dimensional torus $T' \subset G_y$. 
    Clearly, $T'$ is contained in the centre of $\mathrm{C}_{G_y}(g)$.
    We now claim that the action of $T'$ on $\mathrm{H}^0(\mathbb{T}_{\CL \CY, x})$ is trivial, which implies the proposition by \Cref{prop-supp-lem}.
    Consider the following commutative diagram whose rows are exact:
    \[\begin{tikzcd}
        {\mathrm{H}^{-1}(\mathbb{T}_{\CL \CY, x})} & {\mathrm{H}^{-1}(\mathbb{T}_{\CY, y})} & {\mathrm{H}^{-1}(\mathbb{T}_{\CY, y})} & {\mathrm{H}^{0}(\mathbb{T}_{\CL \CY, x})} & {\mathrm{H}^{0}(\mathbb{T}_{\CY, y})} & {\mathrm{H}^{0}(\mathbb{T}_{\CY, y})} \\
        {\mathfrak{g}_x} & {\mathfrak{g}_y} & {\mathfrak{g}_y}
        \arrow[from=1-1, to=1-2]
        \arrow["\cong"', from=1-1, to=2-1]
        \arrow[from=1-2, to=1-3]
        \arrow["\cong"', from=1-2, to=2-2]
        \arrow["\delta", from=1-3, to=1-4]
        \arrow["\cong"', from=1-3, to=2-3]
        \arrow[from=1-4, to=1-5]
        \arrow["{\cdot g}", from=1-5, to=1-6]
        \arrow[from=2-1, to=2-2]
        \arrow["{\mathrm{Ad}_g}"', from=2-2, to=2-3]
    \end{tikzcd}\]
    The image of $\delta$ is isomorphic to $\mathfrak{g}_x$ as $T'$-representations. In particular, $T'$ acts trivially on the image of $\delta$.
    The cokernel of $\delta$ is annihilated by $g$, and hence also fixed by $T'$. Thus we conclude that $T'$ acts trivially on $\mathrm{H}^0(\mathbb{T}_{\CL \CY, x})$ as desired.

\end{proof}

We now prove \Cref{thm-main-0} over the constant loops:

\begin{proposition}\label{prop-compare-BPS}
    There is a natural isomorphism
    \[
     \mathcal{BPS}_{\mathrm{L}Y}^{(0)} |_Y \cong  \mathcal{BPS}_{\hat{Y}}^{(0)} |_Y.
    \]
\end{proposition}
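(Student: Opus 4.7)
The plan is to compare the DT perverse sheaves \(\varphi_{\CL \CY}\) and \(\varphi_{\hat{\CY}}\) in a formal neighborhood of \(\CY\) (embedded as constant loops, respectively as the zero section), and then transport the comparison through the good moduli pushforwards and the restriction to \(Y\).

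First I would establish a canonical equivalence of the relevant formal neighborhoods. Write \(s \colon \CY \hookrightarrow \CL \CY\) for the constant-loops section and \(0 \colon \CY \hookrightarrow \hat{\CY}\) for the zero section. One has tangent identifications \(s^{*} \mathbb{T}_{\CL \CY} \simeq \mathbb{T}_\CY \oplus \mathbb{T}_\CY[-1]\) (from the fibre sequence used in the construction of the standard orientation) and \(0^{*} \mathbb{T}_{\hat{\CY}} \simeq \mathbb{T}_\CY \oplus \mathbb{L}_\CY[-1]\) (tautologically), which are matched by the \(0\)-shifted symplectic duality \(\omega^{\flat} \colon \mathbb{T}_\CY \xrightarrow{\sim} \mathbb{L}_\CY\). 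I would upgrade this first-order match to a canonical equivalence
\[
 \widehat{\CL \CY}_{\CY} \;\simeq\; \widehat{\hat{\CY}}_{\CY}
\]
of derived stacks over \(\CY\), available in characteristic zero via a loop-tangent (HKR-type) statement. Locally on a Luna-type presentation of \(\CY\) as a quotient stack \(V/G\), the identification is implemented by the formal exponential \(\exp \colon \widehat{\mathfrak{g}}_{0} \xrightarrow{\sim} \widehat{G}_{e}\) applied to the quotient description of the loop stack in \Cref{lem-Ltor-quotient}, and \(G\)-equivariant Luna theory assembles these charts into a global identification.

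Next I would check that this equivalence is compatible with the \((-1)\)-shifted symplectic structures and with the standard orientations. Both symplectic structures ultimately arise from the form \(\omega\) on \(\CY\), one through AKSZ applied to the source \(S^{1}\) and the other through the tautological shifted cotangent pairing, and the claim is that they both restrict, on the formal neighborhood, to the canonical \((-1)\)-shifted symplectic form on the formal \((-1)\)-shifted tangent stack of \(\CY\) — a formal Darboux-type statement depending only on the \(0\)-shifted symplectic datum of \(\CY\). The standard orientations — defined by the trivialization \eqref{eq-loop-trivialization} on the loop side, and by the natural pairing \(\det(\mathbb{L}_\CY) \otimes \det(\mathbb{T}_\CY) \simeq \CO_\CY\) on the cotangent side — likewise correspond under the equivalence.

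With this oriented \((-1)\)-shifted symplectic equivalence in hand, the functoriality of the DT perverse sheaf construction of \cite{BBBBJ15} yields
\[
 \varphi_{\CL \CY}\big|_{\widehat{\CL \CY}_{\CY}} \;\cong\; \varphi_{\hat{\CY}}\big|_{\widehat{\hat{\CY}}_{\CY}}.
\]
By \Cref{prop-support} on the loop side and by the zero-section support result \cite[Theorem~7.2.15]{bu2025cohomology} on the cotangent side, the restrictions \(\mathcal{BPS}^{(0)}_{\mathrm{L}Y}|_{Y}\) and \(\mathcal{BPS}^{(0)}_{\hat{Y}}|_{Y}\) are both computed from the pushforward of these formally restricted DT sheaves under the respective good moduli maps followed by \({}^{p} \CH^{0}\); the desired isomorphism then follows. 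The main obstacle is the second step — pinning down the compatibility of the two a priori distinct AKSZ-type symplectic structures and their orientations — which requires a careful formal Darboux analysis reducing both \((-1)\)-shifted symplectic structures, on the formal neighborhood of \(\CY\), to the unique canonical one determined by \(\omega\).
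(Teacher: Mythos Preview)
Your strategy is different from the paper's, and the difference matters. The paper does \emph{not} compare formal completions directly. Instead it builds a one-parameter family \(\tilde{\CL}\CY \to \mathbb{A}^1\) (the deformation to the normal cone of \(\CY \hookrightarrow \CL\CY\), realized as a relative mapping stack out of a derived stack \(\mathfrak{B}\)) whose general fibre is \(\CL\CY\) and whose special fibre is \(\hat{\CY}\). This family carries a relative exact \((-1)\)-shifted symplectic structure and orientation via relative AKSZ, hence a family DT sheaf and a family BPS sheaf on \(\tilde{\mathrm{L}}Y\). The isomorphism of the proposition is then produced as the \emph{specialization map} for this family BPS sheaf restricted to \(Y \times \mathbb{A}^1\); one shows it is an isomorphism by checking, \'etale-locally on \(Y\) (via Luna and the formal \'etaleness of the map from the unipotent loop stack), that the family is constant in the \(\mathbb{A}^1\)-direction, so its vanishing cycles along \(\mathbb{A}^1\) are zero.

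Your direct approach has a real gap at the step where you pass from a formal identification \(\widehat{\CL\CY}_\CY \simeq \widehat{\hat{\CY}}_\CY\) to an isomorphism of BPS sheaves. The DT perverse sheaf is a constructible object on the classical truncation, built by gluing d-critical charts; a formal isomorphism of derived stacks does not, by itself, give a morphism of perverse sheaves. Even after upgrading to \'etale charts via Luna, you obtain only local isomorphisms of \(\varphi\)'s, and you have no mechanism to glue them to a \emph{global} isomorphism of \(\mathcal{BPS}^{(0)}|_Y\). The paper's family construction is precisely what supplies this: the specialization map \(\bar{i}^* \tilde{\CB} \to \psi_{\bar{t}}\tilde{\CB}\) is a global, canonical morphism of perverse sheaves on \(Y\), and the local triviality checks serve only to verify that it is invertible. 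Also, your appeal to \Cref{prop-support} is misplaced here: that proposition constrains the support of \(\mathcal{BPS}^{(0)}_{\mathrm{L}Y}\) to the torsion-loop locus, but the restriction to \(Y\) is a local operation that does not require any support statement; in the paper the two propositions are logically independent and are combined only at the end, in the proof of the main theorem.
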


We will prove this by deforming $\mathrm{L}Y$ to $\hat{Y}$.
To implement this, define derived stacks $\mathfrak{A}$ and $\mathfrak{B}$ by the pushout
\[\begin{tikzcd}
	{\mathrm{pt}} & {\mathbb{A}^1} \\
	{\mathbb{A}^1} & {\mathfrak{A},}
	\arrow["0", from=1-1, to=1-2]
	\arrow["0"', from=1-1, to=2-1]
	\arrow["", from=1-2, to=2-2]
	\arrow[""', from=2-1, to=2-2]
	\arrow["\lrcorner"{anchor=center, pos=0.125, rotate=180}, draw=none, from=2-2, to=1-1]
\end{tikzcd} \quad
\begin{tikzcd}
	{\mathfrak{A}} & {\mathbb{A}^1} \\
	{\mathbb{A}^1} & {\mathfrak{B}}
	\arrow["a", from=1-1, to=1-2]
	\arrow["a"', from=1-1, to=2-1]
	\arrow["i"',from=1-2, to=2-2]
	\arrow["i"',from=2-1, to=2-2]
    \arrow["\lrcorner"{anchor=center, pos=0.125, rotate=180}, draw=none, from=2-2, to=1-1]
\end{tikzcd}\]
where $a \colon \mathfrak{A} \to \mathbb{A}^1$ is the natural map.
Let $b \colon \mathfrak{B} \to \mathbb{A}^1$ be the projection.
Then for $\lambda \in \mathbb{A}^1_{\neq 0}$, $b^{-1}(\lambda) \simeq S^1$ and $b^{-1}(0) \simeq \mathrm{B}\hat{\mathbb{G}}_{\mathrm{a}}$.
By \cite[Proposition B.10.19]{calaque2025aksz}, the stack $\mathfrak{B}$ is $\CO$-compact (see \cite[Definition B.10.16]{calaque2025aksz} for the definition). 
We define the deformed loop stack as the relative mapping stack over $\mathbb{A}^1$
\[
 \tilde{\CL} \CY \coloneqq \mathrm{Map}_{\mathbb{A}^1} (\mathfrak{B}, \CY \times \mathbb{A}^1). 
\]
By construction, $\tilde{\CL} \CY $ fits in the following diagram
\[\begin{tikzcd}
	{\mathrm{T}[-1] \CY} & {\tilde{\CL} \CY} & {\CL \CY \times \mathbb{A}^1_{\neq 0}} \\
	{\{ 0 \}} & {\mathbb{A}^1} & {\mathbb{A}^1_{\neq 0}.}
	\arrow[from=1-1, to=1-2]
	\arrow[from=1-1, to=2-1]
	\arrow["\lrcorner"{anchor=center, pos=0.125}, draw=none, from=1-1, to=2-2]
	\arrow["t", from=1-2, to=2-2]
	\arrow[from=1-3, to=1-2]
	\arrow["\lrcorner"{anchor=center, pos=0.125, rotate=-90}, draw=none, from=1-3, to=2-2]
	\arrow[from=1-3, to=2-3]
	\arrow[from=2-1, to=2-2]
	\arrow[from=2-3, to=2-2]
\end{tikzcd}\]

\begin{lemma}\label{lem-tildeL-good-moduli}
    The stack $\tilde{\CL} \CY$ is a derived Artin stack and admits a good moduli space ${\check{p}} \colon \tilde{\CL} \CY \to \tilde{\mathrm{L}} Y$ which fits in the following Cartesian diagram:
    \[\begin{tikzcd}
        {\hat{Y}} & {\tilde{\mathrm{L}} Y} & {\mathrm{L}Y \times\mathbb{A}^1_{\neq 0}} \\
        {\{ 0 \}} & {\mathbb{A}^1} & {\mathbb{A}^1_{\neq 0}.}
        \arrow[from=1-1, to=1-2]
        \arrow[from=1-1, to=2-1]
        \arrow["\lrcorner"{anchor=center, pos=0.125}, draw=none, from=1-1, to=2-2]
        \arrow["{\bar{t}}", from=1-2, to=2-2]
        \arrow[from=1-3, to=1-2]
        \arrow["\lrcorner"{anchor=center, pos=0.125, rotate=-90}, draw=none, from=1-3, to=2-2]
        \arrow[from=1-3, to=2-3]
        \arrow[from=2-1, to=2-2]
        \arrow[from=2-3, to=2-2]
    \end{tikzcd}\]
\end{lemma}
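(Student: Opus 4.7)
The plan is to verify in sequence three assertions packaged into the lemma: derived Artinness of $\tilde{\CL}\CY$, existence of a good moduli space $\check p\colon\tilde{\CL}\CY\to\tilde{\mathrm L}Y$, and the advertised Cartesian structure of $\tilde{\mathrm L}Y$ over $\mathbb{A}^1$. The three core inputs are the $\CO$-compactness of $\mathfrak{B}\to\mathbb{A}^1$ recalled in the excerpt (\cite[Proposition B.10.19]{calaque2025aksz}), base change for mapping stacks, and Alper's results \cite[Lemma 4.14 and Proposition 4.7]{Alp13}.

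First, the derived Artinness and local finite presentation of $\tilde{\CL}\CY=\Map_{\mathbb{A}^1}(\mathfrak{B},\CY\times\mathbb{A}^1)$ follow from the $\CO$-compactness of $\mathfrak{B}$ over $\mathbb{A}^1$ together with the standing assumptions on $\CY$. For the top square, I would use that the fibers of $b\colon\mathfrak{B}\to\mathbb{A}^1$ are (as recorded in the excerpt) $\mathfrak{B}|_\lambda\simeq S^1$ for $\lambda\ne 0$ and $\mathfrak{B}|_0\simeq\mathrm{B}\hat{\mathbb{G}}_{\mathrm{a}}$, combined with the compatibility of relative mapping stacks with base change in the source. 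This identifies $\tilde{\CL}\CY|_{\mathbb{A}^1_{\ne 0}}\simeq\CL\CY\times\mathbb{A}^1_{\ne 0}$ and $\tilde{\CL}\CY|_0\simeq\Map(\mathrm{B}\hat{\mathbb{G}}_{\mathrm{a}},\CY)\simeq\mathrm{T}[-1]\CY$, yielding the top row of the diagram.

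To obtain the good moduli space, I would verify that the evaluation map $\tilde{\CL}\CY\to\CY\times\mathbb{A}^1$ coming from restriction along one of the tautological sections $\mathbb{A}^1\to\mathfrak{B}$ of the pushout is affine. Fiberwise this becomes $\CL\CY\to\CY$ for $\lambda\ne 0$, which is affine by the standing affine-diagonal hypothesis on $\CY$ (as invoked in the proof of \Cref{prop-loop}), and $\mathrm{T}[-1]\CY\to\CY$ over $0$, affine as the total space of a perfect complex. Granted relative affineness over $\mathbb{A}^1$, the product $\CY\times\mathbb{A}^1$ admits $Y\times\mathbb{A}^1$ as a good moduli space, so \cite[Lemma 4.14]{Alp13} produces $\check p\colon\tilde{\CL}\CY\to\tilde{\mathrm L}Y$ with $\tilde{\mathrm L}Y\to Y\times\mathbb{A}^1$ affine. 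The Cartesian diagram then follows by invoking \cite[Proposition 4.7]{Alp13}, which says good moduli spaces commute with arbitrary base change: pulling back along $\{0\}\hookrightarrow\mathbb{A}^1$ recovers the good moduli space of $\mathrm{T}[-1]\CY\simeq\mathrm{T}^*[-1]\CY=\hat\CY$ (using the $0$-shifted symplectic form on $\CY$), which is $\hat Y$, and pulling back along $\mathbb{A}^1_{\ne 0}\hookrightarrow\mathbb{A}^1$ yields $\mathrm LY\times\mathbb{A}^1_{\ne 0}$.

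The main obstacle I expect is upgrading the fiberwise affineness of $\tilde{\CL}\CY\to\CY\times\mathbb{A}^1$ to an honest affineness statement in a family over $\mathbb{A}^1$, so that Alper's lemma can be applied. A clean way to handle this is to unwind the pushout presentation of $\mathfrak{B}$ and present $\tilde{\CL}\CY$ as an iterated derived fiber product of $\CY\times\mathbb{A}^1$ with itself along diagonals; at each stage the relevant map is a base change of the diagonal of $\CY$, hence affine by the standing hypothesis, and affineness is preserved by composition and base change.
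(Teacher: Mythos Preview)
Your overall strategy coincides with the paper's: reduce everything to showing that the evaluation map $\tilde{\CL}\CY\to\CY\times\mathbb{A}^1$ is affine, then apply \cite[Lemma~4.14]{Alp13}, and deduce the Cartesian diagram from base-change stability of good moduli spaces together with the known fibres of $\mathfrak{B}\to\mathbb{A}^1$. You also correctly isolate the one genuine difficulty, namely upgrading the fiberwise affineness to a relative statement over~$\mathbb{A}^1$.

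The gap is in your resolution of that difficulty. Unwinding the pushout presentation of $\mathfrak{B}$ does \emph{not} exhibit $\tilde{\CL}\CY$ as an iterated fibre product of copies of $\CY\times\mathbb{A}^1$ along base changes of $\Delta_\CY$. Converting the pushouts $\mathfrak{A}=\mathbb{A}^1\amalg_{\{0\}}\mathbb{A}^1$ and $\mathfrak{B}=\mathbb{A}^1\amalg_{\mathfrak{A}}\mathbb{A}^1$ into pullbacks on relative mapping stacks forces you through the Weil restriction
\[
\Map_{\mathbb{A}^1}(\{0\},\CY\times\mathbb{A}^1),
\]
which is not $\CY\times\mathbb{A}^1$: over $\mathbb{A}^1_{\neq0}$ it collapses to a point. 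Consequently the map $\CY\times\mathbb{A}^1\to\Map_{\mathbb{A}^1}(\mathfrak{A},\CY\times\mathbb{A}^1)$ whose pullback you need is a \emph{relative} diagonal over this Weil restriction, not a base change of $\Delta_\CY$, and affine-diagonal of $\CY$ alone does not settle its affineness. Your sketch therefore stops exactly where the real content lies.

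The paper closes this gap by a different route: it recognises the same unwinding as computing deformation spaces, obtaining
\[
\tilde{\CL}\CY\;\simeq\;\mathcal{D}_{\CY/\CL\CY}
\]
(the deformation to the normal cone of the constant-loop inclusion $\CY\hookrightarrow\CL\CY$), and then invokes \cite[Theorem~12.1.1]{Hekking2025_IntroDAG}, which gives that $\mathcal{D}_{\CZ/\CW}\to\CW\times\mathbb{A}^1$ is affine whenever $\CZ\hookrightarrow\CW$ is a closed immersion. Composing with the affine projection $\CL\CY\to\CY$ then yields the desired affineness of $\tilde{\CL}\CY\to\CY\times\mathbb{A}^1$.
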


\begin{proof}
    The only non-trivial point is the existence of the good moduli space $\tilde{\CL} \CY \to \tilde{\mathrm{L}} Y$.
    To prove this, it suffices to show that the map $\tilde{\CL} \CY \to \CY \times \mathbb{A}^1$ is affine by \cite[Lemma 4.14]{Alp13}.
    We prove this using the theory of the Weil restrictions and the deformation to the normal cones.

    First, for a morphism $h \colon \CX_1 \to \CX_2$ of derived stacks, we define the Weil restriction functor
    \[
     h_* \colon \mathsf{dSt_{/ \CX_1}} \to     \mathsf{dSt_{/ \CX_2}}
    \]
    by the right adjoint functor to the base change functor $(\CZ \to \CX_2) \mapsto (\CZ \times_{\CX_2} \CX_1 \to \CX_1)$.
    The deformation space associated with $h$ is
    \[
     \mathcal{D}_{\CX_1 / \CX_2} \coloneqq (\CX_2 \times \{ 0 \} \hookrightarrow \CX_2 \times \mathbb{A}^1)_*  \CX_1 \in \mathsf{dSt_{/ \CX_2 \times \mathbb{A}^1}}.
    \]
    We claim there is an equivalence 
    \begin{equation}\label{eq-deformed-loop-normal-cone}
        \tilde{\CL} \CY \simeq \mathcal{D}_{\CY / \CL \CY}
    \end{equation}
     as derived stacks over $\CY \times \mathbb{A}^1$. 
    This equivalence implies the affineness of $\tilde{\CL} \CY \to \CY \times \mathbb{A}^1$. Indeed, since the map $\CY \to \CL \CY$ is a closed immersion, it follows from \cite[Theorem 12.1.1]{Hekking2025_IntroDAG} that the map $\mathcal{D}_{\CY / \CL \CY} \to \CL \CY \times \mathbb{A}^1$ is affine. Hence the affineness of the map $\CL \CY \to \CY$ implies the desired claim.
    To prove \eqref{eq-deformed-loop-normal-cone}, we first note
    \[
     \mathrm{Map}_{\mathbb{A}^1}(\{ 0 \}, \CY) \simeq  \mathcal{D}_{\CY / \mathrm{pt}}
    \]
    which is nothing but the definition. By using the fact that $\mathcal{D}_{\CY/-}$ preserves limits, we obtain
    \[
        \mathrm{Map}_{\mathbb{A}^1}(\mathfrak{A}, \CY) \simeq \mathcal{D}_{\CY / \CY \times \CY}.    
    \]
    Using the preservation of the limits for $\mathcal{D}_{\CY/-}$ again, we obtain
    \[
        \mathrm{Map}_{\mathbb{A}^1}(\mathfrak{B}, \CY) \simeq \mathcal{D}_{\CY / \CL \CY}    
    \]
    as desired.
\end{proof}

The equivalence \eqref{eq-deformed-loop-normal-cone} together with \cite[Corollary 5.13]{calaque2024shifted} implies that the map $\tilde{\CL} \CY \to \mathbb{A}^1$ carries a relative $(-1)$-shifted symplectic structure $\omega$.
We provide an alternative construction of the $(-1)$-shifted symplectic structure for later use.
Using \cite[Corollary B.5.6]{calaque2024shifted}, we obtain a natural isomorphism
\[
 b_* \CO_{\mathfrak{B}} \cong \CO_{\mathbb{A}^1} \oplus  \CO_{\mathbb{A}^1}[-1].   
\]
The projection $u \colon b_* \CO_{\mathfrak{B}} \to \CO_{\mathbb{A}^1}[-1]$ defines a relative $\CO$-orientation of dimension one.
Indeed, $u |_{\lambda}$ for $\lambda \in \mathbb{A}^1_{\neq 0}$ corresponds to the standard $\CO$-orientation for $S^1$ and $u |_{0}$ corresponds to the $\CO$-orientation on $\mathrm{B} \hat{\mathbb{G}}_{\mathrm{a}} = \mathbb{A}^1[-1]$ used in \cite[Proposition 5.11]{calaque2024shifted}.
The $\CO$-orientation on $b$ together with the relative AKSZ formalism \cite[Theorem 2.34]{calaque2024shifted} implies that the map $\tilde{\CL} \CY \to \mathbb{A}^1$ carries a relative $(-1)$-shifted symplectic structure $\omega$.
It follows from \cite[Proposition 6.1]{khan2025perverse} that $\omega$ canonically upgrades to an exact form.

We now claim that $\tilde{\CL} \CY \to \mathbb{A}^1$ is naturally equipped with an orientation\footnote{Here, we use the term orientation for the square root of the relative canonical bundle.}. More strongly, we will construct a trivialization of the relative canonical bundle 
\begin{equation}\label{eq-trivialize-det-loop}
K_{\tilde{\CL} \CY / \mathbb{A}^1} \coloneqq \det(\mathbb{L}_{\tilde{\CL} \CY / \mathbb{A}^1}) \cong \CO_{\tilde{\CL} \CY}.
\end{equation}
To do this, we will construct a fibre sequence
\[
 \tilde{r}^* \mathbb{T}_{\CY \times \mathbb{A}^1 / \mathbb{A}^1}[-1] \to \mathbb{T}_{\tilde{\CL} \CY / \mathbb{A}^1} \to \tilde{r}^* \mathbb{T}_{\CY \times \mathbb{A}^1 / \mathbb{A}^1}
\]
which implies the desired trivialization of the canonical bundle.
Here, $\tilde{r} \colon \tilde{\CL} \CY \to  \CY \times \mathbb{A}^1$ is the map induced by evaluation $i \colon \mathbb{A}^1 \to \mathfrak{B}$.
For this, take a map $t \colon T \to \tilde{\CL} \CY$ from a derived scheme $T$ over $\mathbb{A}^1$, which corresponds to a map $\hat{t} \colon T \times_{\mathbb{A}^1} \mathfrak{B} \to \CY \times \mathbb{A}^1$.
By using the description of the tangent complex for the mapping stacks (see e.g. \cite[Proof of Theorem 2.5]{PTVV13}),
we obtain
\begin{equation}\label{eq-map-tangent}
 t^* \mathbb{T}_{\tilde{\CL} \CY  / \mathbb{A}^1} \cong b_{T, *} \hat{t}^* \mathbb{T}_{\CY \times \mathbb{A}^1  / \mathbb{A}^1}
\end{equation}
where $b_T \colon T \times_{\mathbb{A}^1} \mathfrak{B} \to T$ are the base change of $b$.
By using \cite[Corollary~B.5.6]{calaque2025aksz}, we obtain an isomorphism
\begin{equation}\label{eq-fibre-description}
    b_{T, *} \hat{t}^* \mathbb{T}_{\CY \times \mathbb{A}^1 / \mathbb{A}^1} \cong \fib(i_T^* \hat{t}^* \mathbb{T}_{\CY \times \mathbb{A}^1 / \mathbb{A}^1} \oplus i_T^* \hat{t}^* \mathbb{T}_{\CY \times \mathbb{A}^1  / \mathbb{A}^1} \to a_{T, *} a_T^* i_T^* \hat{t}^*  \mathbb{T}_{\CY \times \mathbb{A}^1  / \mathbb{A}^1})
\end{equation}
where $i_T \colon T \to T \times_{\mathbb{A}^1} \mathfrak{B}$ and $a_T \colon  T \times_{\mathbb{A}^1} \mathfrak{A} \to T$ is the base change of $i$ and $a$.
Using the projection formula and the base change theorem (see \cite[Theorem B.8.12]{calaque2025aksz}), we obtain an isomorphism
\[
    a_{T, *} a_T^* i_T^* \hat{t}^*  \mathbb{T}_{\CY \times \mathbb{A}^1  / \mathbb{A}^1} \cong i_T^* \hat{t}^*  \mathbb{T}_{\CY \times \mathbb{A}^1  / \mathbb{A}^1} \otimes a_{T, *} \CO_{T \times_{\mathbb{A}^1} \mathfrak{A}} \cong i_T^* \hat{t}^*  \mathbb{T}_{\CY \times \mathbb{A}^1  / \mathbb{A}^1} \otimes t^* a_{ *} \CO_{T \times_{\mathbb{A}^1} \mathfrak{A}} \cong (i_T^* \hat{t}^*  \mathbb{T}_{\CY \times \mathbb{A}^1  / \mathbb{A}^1}) ^{\oplus 2}.
\]
In particular, the isomorphisms \eqref{eq-map-tangent} and \eqref{eq-fibre-description} imply
\[
    t^* \mathbb{T}_{\tilde{\CL} \CY  / \mathbb{A}^1}  = \fib(i_T^* \hat{t}^* \mathbb{T}_{\CY \times \mathbb{A}^1  / \mathbb{A}^1} \to i_T^* \hat{t}^* \mathbb{T}_{\CY \times \mathbb{A}^1  / \mathbb{A}^1} ) \cong \fib(t^* \tilde{r}^* \mathbb{T}_{\CY \times \mathbb{A}^1  / \mathbb{A}^1} \to t^* \tilde{r}^* \mathbb{T}_{\CY \times \mathbb{A}^1   / \mathbb{A}^1}) 
\]
Since it is functorial in $T$, we obtain
\[
    \mathbb{T}_{\tilde{\CL} \CY / \mathbb{A}^1}  \cong  \fib( \tilde{r}^* \mathbb{T}_{\CY \times \mathbb{A}^1  / \mathbb{A}^1} \to \tilde{r}^* \mathbb{T}_{\CY \times \mathbb{A}^1  / \mathbb{A}^1}) 
\]
as desired. One can show that, over $1 \in \mathbb{A}^1$, the endomorphism of $\tilde{r}^* \mathbb{T}_{\CY \times \mathbb{A}^1  / \mathbb{A}^1}$ is given by the difference of the identity morphism and the action morphism,
and over $0 \in \mathbb{A}^1$, it is given by the Atiyah class.

Combining the above discussions, we obtain:

\begin{proposition}\label{prop-relative-symp-ori}
    The natural map $\tilde{r} \colon \tilde{\CL} \CY \to \mathbb{A}^1$ is naturally equipped with a relative exact $(-1)$-shifted symplectic structure and an orientation.
\end{proposition}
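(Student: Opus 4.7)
The proof consists in assembling the pieces constructed in the preceding paragraphs and verifying that they package together into a relative exact $(-1)$-shifted symplectic form together with an orientation; I would present it in two essentially independent steps followed by a compatibility check.

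First, for the symplectic structure, I would invoke the relative AKSZ formalism. The decomposition $b_* \CO_{\mathfrak{B}} \cong \CO_{\mathbb{A}^1} \oplus \CO_{\mathbb{A}^1}[-1]$ of \cite[Corollary B.5.6]{calaque2024shifted}, followed by projection onto the shifted summand, gives the morphism $u \colon b_* \CO_{\mathfrak{B}} \to \CO_{\mathbb{A}^1}[-1]$; the claim is that $u$ is a relative $\CO$-orientation of dimension one on $b$. I would verify this fibrewise: over $\lambda \in \mathbb{A}^1_{\neq 0}$ the fibre $S^1$ inherits its standard $\CO$-orientation, while over $0$ the fibre $\mathrm{B}\hat{\mathbb{G}}_{\mathrm{a}}$ inherits the $\CO$-orientation of \cite[Proposition 5.11]{calaque2024shifted}. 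Since $\mathfrak{B}$ is $\CO$-compact by \cite[Proposition B.10.19]{calaque2025aksz}, the relative AKSZ theorem \cite[Theorem 2.34]{calaque2024shifted} then produces a relative $(-1)$-shifted symplectic form $\omega$ on $\tilde{r} \colon \tilde{\CL}\CY \to \mathbb{A}^1$, and its canonical exact refinement is supplied directly by \cite[Proposition 6.1]{khan2025perverse}.

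Second, for the orientation, I would exploit the fibre sequence
\[
\tilde{r}^* \mathbb{T}_{\CY \times \mathbb{A}^1 / \mathbb{A}^1}[-1] \to \mathbb{T}_{\tilde{\CL}\CY / \mathbb{A}^1} \to \tilde{r}^* \mathbb{T}_{\CY \times \mathbb{A}^1 / \mathbb{A}^1}
\]
derived above from the tangent-of-mapping-stack formula \eqref{eq-map-tangent}, the expression \eqref{eq-fibre-description}, and the projection/base-change identity $a_{T,*} \CO_{T \times_{\mathbb{A}^1} \mathfrak{A}} \cong \CO_T^{\oplus 2}$. Taking determinants yields a canonical trivialization $\det(\mathbb{L}_{\tilde{\CL}\CY / \mathbb{A}^1}) \cong \CO_{\tilde{\CL}\CY}$, whose tautological square root is the sought orientation.

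The main point I would scrutinize is the behaviour of this package at the two distinguished fibres. One must check that at $\lambda = 1$ the orientation recovers the standard orientation on $\CL\CY$ fixed in \eqref{eq-loop-trivialization}, and at $\lambda = 0$ it recovers the standard orientation on $\mathrm{T}^*[-1]\CY$ used in \cite[\S 6.1.4]{bu2025cohomology}. Granting the explicit descriptions of the endomorphism of $\tilde{r}^* \mathbb{T}_{\CY \times \mathbb{A}^1 / \mathbb{A}^1}$ recorded just before the proposition (identity minus the action morphism over $\lambda = 1$, Atiyah class over $\lambda = 0$), both compatibilities reduce to a direct comparison of trivializations of determinant line bundles; this bookkeeping, while not conceptually difficult, is the step I would expect to require the most care in a written proof, because it is what makes the deformation useful for transporting the DT perverse sheaf in the subsequent arguments.
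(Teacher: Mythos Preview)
Your proposal is correct and matches the paper's own argument essentially line for line: the paper also obtains the relative $(-1)$-shifted symplectic structure via the relative $\CO$-orientation $u$ and the AKSZ theorem \cite[Theorem~2.34]{calaque2024shifted}, upgrades to exactness by \cite[Proposition~6.1]{khan2025perverse}, and produces the orientation from the fibre sequence \eqref{eq-fibre-description} trivializing the relative canonical bundle. Your third step on fibrewise compatibility is not strictly part of the proposition but is exactly the remark the paper records immediately afterward, so including it does no harm.
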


By construction, the orientation for $\tilde{r} \colon \tilde{\CL} \CY \to \mathbb{A}^1$ recovers the standard orientation for $\CL \CY$ over $1 \in \mathbb{A}^1$ and the standard orientation for $\hat{\CY}$ over $0 \in \mathbb{A}^1$.

 \Cref{prop-relative-symp-ori} implies that we can define the perverse pullback functor
\[
    \tilde{r}^{\varphi} \colon \mathsf{D^b_c}(\mathbb{A}^1) \to \mathsf{D^b_c}(\tilde{\CL} \CY)
\]
by \cite[Theorem A]{khan2025perverse}.
We define the family version of the DT perverse sheaf on $\tilde{\CL} \CY$ by
\[
 \varphi_{\tilde{\CL} \CY / \mathbb{A}^1} \coloneqq     \tilde{r}^{\varphi} \mathbb{Q}_{\mathbb{A}^1}.
\]
Recall from \Cref{lem-tildeL-good-moduli} that $\tilde{\CL} \CY$ admits a good moduli space $\check{p} \colon \tilde{\CL} \CY  \to \tilde{\mathrm{L}} Y$.
We define a family version of the BPS sheaf by 
\[
 \mathcal{BPS}_{\tilde{\mathrm{L}} Y /\mathbb{A}^1}^{(0)} \coloneqq  {}^{\mathrm{p}}\CH^{1}(\check{p}_* \varphi_{\tilde{\CL} \CY / \mathbb{A}^1} ).  
\]
We will prove \Cref{prop-compare-BPS} by showing that $\mathcal{BPS}_{\tilde{\mathrm{L}} Y /\mathbb{A}^1}^{(0)} |_{Y \times \mathbb{A}^1}$ is constant along the $\mathbb{A}^1$-direction and that restriction to each fibre over $\mathbb{A}^1$ recovers the BPS sheaf of the fibre.
To prove this, we show that the classical truncation of $\mathcal{L} \CY$ together with the natural relative d-critical structure is \'etale locally constant along the $\mathbb{A}^1$-direction.

Consider the map $\ell \colon \mathfrak{B} \to \mathrm{B}\mathbb{G}_{\mathrm{a}} \times \mathbb{A}^1$
which restricts to $1 \mapsto \lambda$ on the stabilizer group at $\lambda \in \mathbb{A}^1_{\neq 0}$ and $\mathrm{B} \hat{\mathbb{G}}_{\mathrm{a}} \hookrightarrow \mathrm{B} \mathbb{G}_{\mathrm{a}}$ over $0 \in \mathbb{A}^1$.
The map $\ell$ induces a map
\[
 \tilde{q} \colon \CL^{\mathrm{u}} \CY \times \mathbb{A}^1 \to \tilde{\CL} \CY,
\]
where $\CL^{\mathrm{u}} \CY \coloneqq \mathrm{Map}(\mathrm{B} \mathbb{G}_{\mathrm{a}}, \CY)$ denotes the unipotent loop stack.
It follows from \cite[Proposition 5.22]{naef2023torsion} that the map $\tilde{q}$ is formally \'etale.
Further, the map $\ell$ relates the $(-1)$-shifted orientation on $\mathfrak{B}$ and the preorientation of $\mathrm{B}\mathbb{G}_{\mathrm{a}} \times \mathbb{A}^1$ given by the projection 
$\Gamma(\mathrm{B}\mathbb{G}_{\mathrm{a}}, \CO_{\mathrm{B}\mathbb{G}_{\mathrm{a}} }) \to \mathbb{C}[-1]$. 
In particular, we see that $\tilde{q}^{\star} \tilde{\omega}$ is constant in the $\mathbb{A}^1$-direction.
We use this to prove the following:

\begin{lemma}\label{lem-locally-isotrivial}
    Let $p \in \hat{\CY} \times \{ 0 \} \subset \tilde{\CL} \CY$ be a closed point.
    Then there exists an Artin stack $\CU$ with a good moduli space $p_{\CU} \colon \CU \to U$ which fits in the following commutative diagram
    \begin{equation}\label{eq-diagram-wanted}
        \begin{tikzcd}
        {(\tilde{\CL} \CY)_{\cl}} & \CU & {\hat{\CY}_{\cl} \times \mathbb{A}^1} \\
        {(\tilde{\mathrm{L}} Y)_{\cl}} & U & {\hat{Y}_{\cl} \times \mathbb{A}^1} \\
        & {\mathbb{A}^1}
        \arrow[from=1-1, to=2-1]
        \arrow["{\eta_1}"', from=1-2, to=1-1]
        \arrow["{\eta_2}", from=1-2, to=1-3]
        \arrow["\lrcorner"{anchor=center, pos=0.125, rotate=-90}, draw=none, from=1-2, to=2-1]
        \arrow[from=1-2, to=2-2]
        \arrow["\lrcorner"{anchor=center, pos=0.125}, draw=none, from=1-2, to=2-3]
        \arrow[from=1-3, to=2-3]
        \arrow["\bar{t}"', from=2-1, to=3-2]
        \arrow["{\bar{\eta}_1}"',from=2-2, to=2-1]
        \arrow["{\bar{\eta}_2}", from=2-2, to=2-3]
        \arrow["\mathrm{pr}_2"', from=2-3, to=3-2]
    \end{tikzcd}\end{equation}
    with the following conditions:
    \begin{itemize}
    \item The horizontal arrows are \'etale.
    \item There exists a closed point $u \in \CU$ such that $\eta_1(u) = p$ and $\eta_{2}(u) = p$ hold.
    \item Let $\tilde{s}$ and $\hat{s}$ denote the natural relative d-critical structures on $(\tilde{\CL} \CY)_{\cl}$ and $\hat{\CY}_{\cl} \times \mathbb{A}^1$ over $\mathbb{A}^1$. Then we have $\eta_1^{\star} \tilde{s} = \eta_2^{\star} \hat{s}$, and the induced orientations are identified.
\end{itemize}
\end{lemma}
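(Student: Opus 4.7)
The plan is to construct $\CU$ as an \'etale neighborhood inside the classical truncation of $\CL^{\mathrm{u}}\CY \times \mathbb{A}^1$, using that this stack maps formally \'etale to both targets in \eqref{eq-diagram-wanted} and that its two pulled-back relative symplectic structures coincide.

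First, I would construct the analogue of $\tilde{q}$ landing in $\hat{\CY} \times \mathbb{A}^1$. The restriction $\tilde{q}|_{0}$ is the formally \'etale map
\[
    \CL^{\mathrm{u}}\CY = \mathrm{Map}(\mathrm{B}\mathbb{G}_{\mathrm{a}}, \CY) \to \mathrm{Map}(\mathrm{B}\hat{\mathbb{G}}_{\mathrm{a}}, \CY) = \hat{\CY}
\]
induced by $\mathrm{B}\hat{\mathbb{G}}_{\mathrm{a}} \hookrightarrow \mathrm{B}\mathbb{G}_{\mathrm{a}}$. Extending trivially along $\mathbb{A}^1$ produces a formally \'etale morphism $\tilde{q}' \colon \CL^{\mathrm{u}}\CY \times \mathbb{A}^1 \to \hat{\CY} \times \mathbb{A}^1$. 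Both $\tilde{q}$ and $\tilde{q}'$ become classically \'etale on truncations because source and target are locally of finite type. The closed point $p \in \hat{\CY} \times \{0\}$ lifts via the constant-loop embedding $\CY \hookrightarrow \CL^{\mathrm{u}}\CY$ to a closed point $\tilde{p} \in \CL^{\mathrm{u}}\CY \times \{0\}$, and both $\tilde{q}$ and $\tilde{q}'$ send $\tilde{p}$ to $p$.

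Next, I would apply the local structure theorem for good moduli stacks of Alper-Hall-Rydh near $\tilde{p}$ to extract a common \'etale neighborhood $\CU \to (\CL^{\mathrm{u}}\CY \times \mathbb{A}^1)_{\cl}$ admitting a good moduli space $\CU \to U$ such that the induced maps $\bar{\eta}_1 \colon U \to (\tilde{\mathrm{L}} Y)_{\cl}$ and $\bar{\eta}_2 \colon U \to \hat{Y}_{\cl} \times \mathbb{A}^1$ are \'etale and make the two squares in \eqref{eq-diagram-wanted} Cartesian. Concretely, one finds local quotient presentations provided by Alper-Hall-Rydh at the images of $p$ in the two target moduli spaces, pulls back to obtain a common slice around $\tilde{p}$, and shrinks as needed. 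The horizontal maps $\eta_1, \eta_2$ of \eqref{eq-diagram-wanted} are then the compositions of $\CU \to (\CL^{\mathrm{u}}\CY \times \mathbb{A}^1)_{\cl}$ with $\tilde{q}_{\cl}$ and $\tilde{q}'_{\cl}$ respectively.

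Finally, the identification of relative d-critical structures is immediate from the key property recorded before the lemma: $\tilde{q}^{\star}\tilde{\omega}$ is constant in the $\mathbb{A}^1$-direction. The analogous computation for $\tilde{q}'$, combined with the fact that the $(-1)$-shifted symplectic structure on $\hat{\CY} \times \mathbb{A}^1$ is by construction constant in $\mathbb{A}^1$, yields the equality $\tilde{q}^{\star}\tilde{\omega} = \tilde{q}'^{\star}\hat{\omega}$ of relative $(-1)$-shifted symplectic forms on $\CL^{\mathrm{u}}\CY \times \mathbb{A}^1$. Via the passage from shifted symplectic structures to d-critical loci of \cite{BBBBJ15}, the pullbacks $\eta_1^{\star}\tilde{s}$ and $\eta_2^{\star}\hat{s}$ then agree on $\CU$; the orientations match for the same reason, because both descend from a common pullback of the trivialization \eqref{eq-trivialize-det-loop}. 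The main obstacle is the middle step: promoting the two formally \'etale morphisms to a single \'etale neighborhood $\CU$ whose good moduli space maps \'etalely to both $(\tilde{\mathrm{L}} Y)_{\cl}$ and $\hat{Y}_{\cl} \times \mathbb{A}^1$. This requires a careful application of Alper-Hall-Rydh, leveraging the affine-diagonal hypothesis imposed throughout the paper.
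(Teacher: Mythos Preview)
Your plan has a real gap in the middle step. The unipotent loop stack $\CL^{\mathrm{u}}\CY = \mathrm{Map}(\mathrm{B}\mathbb{G}_{\mathrm{a}}, \CY)$ is in general \emph{not} an Artin stack locally of finite type: $\mathrm{B}\mathbb{G}_{\mathrm{a}}$ is not $\CO$-compact (its derived global sections form a polynomial ring in one generator of cohomological degree~$1$), so the tangent complex of $\CL^{\mathrm{u}}\CY$ is typically unbounded. Consequently the implication ``formally \'etale $\Rightarrow$ \'etale on classical truncations'' fails, and you cannot run Alper--Hall--Rydh on $(\CL^{\mathrm{u}}\CY \times \mathbb{A}^1)_{\cl}$ to extract a common \'etale slice $\CU$. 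This is precisely why the paper does \emph{not} use $\CL^{\mathrm{u}}\CY$ as a roof: it only uses the formally \'etale map $\tilde{q}$ to deduce an equivalence of \emph{formal completions}
\[
  (\tilde{\CL}\CY)_p^{\wedge} \;\cong\; (\hat{\CY}\times\mathbb{A}^1)_p^{\wedge}
\]
over $\mathbb{A}^1$, preserving the exact $(-1)$-shifted symplectic structures and stabilizer groups, and then applies the local structure theorem and Luna's fundamental lemma directly to the two genuine finite-type Artin stacks $(\tilde{\CL}\CY)_{\cl}$ and $\hat{\CY}_{\cl}\times\mathbb{A}^1$ to produce $\CU$. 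The comparison of d-critical structures is then handled via the separate \Cref{lem-d-crit-formal}: agreement on the formal completion forces agreement on an open neighbourhood.

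Your orientation argument is also too quick. Even granting the rest, the two trivializations of the canonical bundle pulled back to $\CU$ need not coincide on the nose --- they differ by an invertible function, and the paper passes to the double cover of $U$ parametrizing square roots of the ratio to force them to agree. Claiming they match automatically ``for the same reason'' skips this step.
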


\begin{proof}
    Note that the \'etaleness of $\tilde{q} \colon \CL^{\mathrm{u}} \CY \times \mathbb{A}^1 \to \tilde{\CL} \CY$ implies an equivalence of formal completions
    \[
     \tilde{\CL} \CY_{p}^{\wedge} \cong   (\hat{\CY}_{p} \times \mathbb{A}^1)^{\wedge}_p 
    \]
    over $\mathbb{A}^1$ preserving the relative exact $(-1)$-shifted symplectic structures and the stabilizer groups. 
    In particular, the existence of the above diagram \eqref{eq-diagram-wanted} together with the first two conditions follow from \cite[Theorem 4.19]{alper2020luna} and Luna's fundamental lemma \cite[Proposition 4.13]{alper2020luna}.
    Now we prove an equality of relative d-critical structures $\eta_1^{\star} \tilde{s} = \eta_2^{\star} \hat{s}$ after possibly shrinking $\CU$.
    This follows from \cite[Lemma 3.1]{alper2020luna} and \Cref{lem-d-crit-formal}.
    To compare the orientations, let $o_i \colon \CO_{\CU}^{\otimes 2} \cong K_{\CU}$ be the orientations induced from $\eta_i$.
    Let $f_i$ be the corresponding functions over $\CU$ and $\bar{f}_i$ be the induced functions on $U$.
    By replacing $U$ by the double cover parametrizing the square root of $f_1 / f_2$ and $\CU$ be the base change, we conclude that $o_1$ and $o_2$ are isomorphic as desired.
\end{proof}

\begin{lemma}\label{lem-d-crit-formal}
    Let $\CX \to B$ be a morphism between Artin stacks and $p \in \CX$ be a closed point.
    Assume that we are given relative d-critical structures $s_1, s_2 \in \Gamma(\CX, \mathcal{S}_{X / B})$ such that the restriction to the formal completion $s_1 |_{\CX_{p}^{\wedge}}$ and $s_2 |_{\CX_{p}^{\wedge}}$ coincide.
    Then there exists an open neighborhood $\CU$ of $p$ such that $s_1 |_{\CU} = s_2 |_{\CU}$ holds.
\end{lemma}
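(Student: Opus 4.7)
The plan is to reduce the equality $s_1 = s_2$ to the vanishing of a section of a coherent $\CO$-module and then invoke faithful flatness of completion at a Noetherian local ring. The key observation is that, locally on $\CX$, the sheaf $\mathcal{S}_{\CX/B}$ of relative d-critical structures is modeled by a subsheaf of a coherent sheaf, so that equality of two sections becomes an algebraic condition whose failure locus is closed.

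First I would pass to a smooth atlas by choosing a smooth surjection $V \to \CX$ from a scheme $V$ and a closed point $\tilde{p} \in V$ over $p$. Since smooth morphisms are flat and identify formal completions up to extra smooth parameters, both the hypothesis ($s_1 = s_2$ on $\CX_p^{\wedge}$) and the conclusion (equality on an open neighborhood) descend, so it suffices to work on the scheme $V$. Next, I would invoke the Joyce--Brav--Bussi-type local structure theorem for relative d-critical loci (essentially the input used elsewhere in the paper, cf.\ the proof of \Cref{lem-locally-isotrivial}): after shrinking $V$ Zariski around $\tilde{p}$, choose a closed embedding $V \hookrightarrow W$ into a smooth $B$-scheme $W$ and a function $f \in \Gamma(W, \CO_W)$ with $V \subset \Crit_B(f)$, so that $s_1, s_2$ are represented by relative potentials $f_1, f_2$. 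The equality $s_1 = s_2$ as d-critical structures then becomes the vanishing of the class of $f_1 - f_2$ in a coherent quotient sheaf $\CF$ of $\CO_W$ restricted to $V$ (roughly $\CO_W / (\CI_V^2 + (df)\cdot \mathbb{T}_{W/B})$), together with a matching between relevant derivative terms.

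Finally, the hypothesis yields a section $\sigma \in \Gamma(V, \CF)$ whose restriction to the formal completion $V_{\tilde{p}}^{\wedge}$ vanishes. Because $\widehat{\CO_{V, \tilde{p}}}$ is faithfully flat over the Noetherian local ring $\CO_{V, \tilde{p}}$, the germ $\sigma_{\tilde{p}}$ vanishes; and since $\CF$ is coherent, the vanishing locus of $\sigma$ is Zariski open, giving an open neighborhood $V' \subset V$ of $\tilde{p}$ on which $s_1|_{V'} = s_2|_{V'}$. Pushing $V'$ down to $\CX$ and taking the image yields the required open substack $\CU$. The main obstacle I foresee is purely bookkeeping: one must verify that the relative version of the local structure theorem produces the correct Jacobian-type ideal and is compatible with formal completion along $\tilde{p}$, so that the abstract hypothesis translates cleanly into the vanishing of a coherent section. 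Once this coherent model is set up, the faithful-flatness step is automatic and the lemma drops out.
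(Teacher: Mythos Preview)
Your proposal is correct and follows the same essential strategy as the paper: reduce to a smooth atlas, express the difference $s_1 - s_2$ as a section of a coherent sheaf, and then use that a coherent section vanishing on the formal completion at a closed point vanishes Zariski-locally.

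The only genuine difference is the model of relative d-critical structures used. You invoke the Joyce-style embedded picture (a closed embedding $V \hookrightarrow W$ into a smooth $B$-scheme, with $s_i$ represented by a class in $\CO_W/\CI_V^2$), whereas the paper uses the intrinsic description from \cite[\S 4.3]{khan2025perverse}: a d-critical structure on the scheme $\CX$ is a function $f$ on $\CX$ together with a homotopy $\eta \colon d_{\mathrm{dR}} f \sim 0$ of relative $1$-forms. In the paper's argument the comparison splits cleanly into two coherent conditions: first $f_1 = f_2$ as functions (immediate), then the difference of the homotopies is a $(-1)$-shifted $1$-form $\tau$, a section of a coherent sheaf which vanishes formally at $p$ and hence in a neighborhood. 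Your embedded model packages both pieces into the single quotient $\CO_W/\CI_V^2$, at the cost of choosing an auxiliary embedding; the paper's intrinsic model avoids the embedding but requires tracking the homotopy separately. Either way the argument terminates in the same faithful-flatness/coherence step, so the two routes are equivalent in content. Your parenthetical about ``matching between relevant derivative terms'' is unnecessary in the Joyce model and can be dropped.
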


\begin{proof}
    This is proved by Kaubrys \cite[Lemma 4.8]{kaubrys2024cohomological} when $B = \mathrm{pt}$, and a similar argument works in the relative setting. 
    For the reader's convenience, we will include the proof here.

    By replacing $\CX$ by a smooth cover, we may assume that $\CX$ is a scheme.
    Recall from \cite[\S 4.3]{khan2025perverse} that the relative d-critical structures $s_i$ is given by a function $f_i$ on $\CX$ together with a homotopy $\eta_i \colon d_{\mathrm{dR}} f_i \sim 0$ of relative differential forms over $B$.
    By assumption, $f_1$ and $f_2$ coincides in an open neighborhood of $p$.
    We now compare $\eta_1$ and $\eta_2$. To do this, we may assume $f_1 = f_2$. The homotopy $\eta_1$ and $\eta_2$ determines a loop in the space of $0$-shifted $1$-forms hence $(-1)$-shifted $1$-form $\tau$.
    By assumption, we have $\tau |_{\CX_{p}^{\wedge}} = 0$. Since $\tau$ is a section of a coherent sheaf, we conclude that $\tau$ vanishes over an open neighborhood of $p$ as desired.

\end{proof}

We now use \Cref{lem-locally-isotrivial} to prove \Cref{prop-compare-BPS}.

\begin{proof}[Proof of \Cref{prop-compare-BPS}]
    Let $\bar{t} \colon \tilde{\mathrm{L}} Y \to \mathbb{A}^1$ be the natural map.
    Write $\tilde{\CB} \coloneqq \mathcal{BPS}_{\tilde{\mathrm{L}} Y /\mathbb{A}^1}^{(0)} |_{ Y \times \mathbb{A}^1}$ and consider the specialization map
    \begin{equation}\label{eq-specialization-map}
     \bar{i}^* \tilde{\CB}  \to \psi_{\bar{t}} \tilde{\CB}
    \end{equation}
    where $\bar{i} \colon \mathrm{L} Y \times \{ 0 \} \hookrightarrow \tilde{\mathrm{L}}Y$ is induced by $i \colon \mathcal{L} Y \times \{ 0 \} \hookrightarrow \tilde{\mathcal{L}}Y$.
    We claim that this map yields the desired isomorphism.
    Using \Cref{lem-locally-isotrivial} and the diagram \eqref{eq-diagram-wanted}, we obtain
    \[
    \bar{\eta}_1^* \varphi_{\bar{t}} \tilde{\CB} \cong \varphi_{\bar{t} \circ \bar{\eta}_1} \bar{\eta}_1^* \tilde{\CB}  \cong \varphi_{\pr_2 \circ \bar{\eta}_2} \bar{\eta}_2^*  (\mathcal{BPS}_{\hat{Y}}^{(0)} \boxtimes \mathbb{Q}_{\mathbb{A}^1})\cong \bar{\eta}_2^* \varphi_{\pr_2} (\mathcal{BPS}_{\hat{Y}}^{(0)} \boxtimes \mathbb{Q}_{\mathbb{A}^1}) = 0.
    \]
    In particular, we conclude that $\varphi_{\bar{t}} \tilde{\mathcal{B}} = 0$ and hence the map \eqref{eq-specialization-map} is invertible.
    Since the map $\bar{t} \colon \tilde{\mathrm{L}}Y \to \mathbb{A}^1$ restricts to the projection $\mathrm{L}Y \times \mathbb{A}^1_{\neq 0} \to \mathbb{A}^1_{\neq 0}$, we obtain
    \[
        \psi_{\bar{t}} \tilde{\CB} \cong \mathcal{BPS}_{\mathrm{L} Y}^{(0)} |_Y.
    \]
    We now compute the complex $\bar{i}^* \tilde{\CB}$. 
    Using compatibility of the perverse pullback with finite morphism, we obtain a natural map
    \[
     i^* \varphi_{\tilde{\CL} \CY / \mathbb{A}^1} \to \varphi_{ \hat{\CY}}.
    \]
    Consider the following composite
    \[
        \bar{i}^* \mathcal{BPS}_{\tilde{\mathrm{L}} Y /\mathbb{A}^1}^{(0)}  \to \bar{i}^* \check{p}_* \varphi_{\tilde{\CL} \CY / \mathbb{A}^1} \to \hat{p}_* i^* \varphi_{\tilde{\CL} \CY / \mathbb{A}^1} \to \hat{p}_* \varphi_{ \hat{\CY}}.
    \]
    It follows from \Cref{lem-locally-isotrivial} that $\bar{i}^* \mathcal{BPS}_{\tilde{\mathrm{L}} Y /\mathbb{A}^1}^{(0)}  $ is perverse and the composite induces an isomorphism on the $0$-th perverse cohomology sheaf.
    In particular, we obtain $\bar{i}^* \mathcal{BPS}_{\tilde{\mathrm{L}} Y /\mathbb{A}^1}^{(0)} \cong \mathcal{BPS}_{\hat{Y}}^{(0)}$ and hence $i^* \tilde{\mathcal{B}} \cong \mathcal{BPS}_{\hat{Y}}^{(0)} |_Y$ as desired.

\end{proof}

\Cref{prop-compare-BPS} describes the contribution to the BPS sheaf for the good moduli space of the loop stack from the constant loops.
We describe the contribution from the other components of the support by reducing to the above result.
For this purpose, consider the following map
\begin{equation}\label{eq-eta_n}
 \eta  = \eta_n \colon \CL \CL_n \CY \longrightarrow \CL \CY    
\end{equation}
induced by
\begin{equation}\label{eq-ZZn}
  \mathbb{Z} \longrightarrow \mathbb{Z} \oplus \mathbb{Z} / n   \mathbb{Z}, \quad 1 \mapsto (1, 1).
\end{equation}
By definition, the composition $\CL_n \CY \xrightarrow{\textnormal{const loop}} \CL \CL_n \CY \xrightarrow{\eta} \CL \CY $ is identified with the natural map $\CL_n \CY \to \CL \CY$.
We will use the following properties of $\eta$:

\begin{lemma}\label{lem-eta-properties}
    \begin{enumerate}
        \item The map $\eta$ is \'etale in a neighborhood of the constant loop $\CL_n \CY \hookrightarrow \CL \CL_n \CY$. \label{item-etale}
        \item We equip $\CL \CL_n \CY$ with the $(-1)$-shifted symplectic structure using \Cref{prop-loop} \Cref{item-loop-AKSZ} twice.
              Then $\eta$ preserves the $(-1)$-shifted symplectic structure. \label{item-compare-symplectic}
        \item  Let $\CU \subset \CL \CL_n \CY$ be the open substack where $\eta$ is \'etale.
               Then $\eta |_{\CU}$ preserves the natural orientations. \label{item-compare-orientation}
        
    \end{enumerate}
\end{lemma}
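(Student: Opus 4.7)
The plan is to verify the three claims at the level of tangent complexes, exploiting the naturality of the AKSZ construction applied to the source morphism $f \colon B\mathbb{Z} \to B(\mathbb{Z} \oplus \mathbb{Z}/n\mathbb{Z})$ that induces $\eta$. Throughout I use the standard formula $\mathbb{T}_{\Map(M, \CY), \phi} \simeq \mathrm{R}\Gamma(M, \phi^* \mathbb{T}_\CY)$ for the tangent complex of a mapping stack.

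For \Cref{item-etale}, at a constant loop $\psi \colon B(\mathbb{Z} \oplus \mathbb{Z}/n\mathbb{Z}) \to \CY$ (namely, one factoring through $\mathrm{pr}_2 \colon B(\mathbb{Z} \oplus \mathbb{Z}/n\mathbb{Z}) \to B(\mathbb{Z}/n\mathbb{Z})$ via some $\psi_0$), the K\"unneth formula together with the vanishing of higher group cohomology of $\mathbb{Z}/n\mathbb{Z}$ in characteristic zero yields
\[
    \mathbb{T}_{\CL \CL_n \CY, \psi} \simeq V^{\mathbb{Z}/n\mathbb{Z}} \oplus V^{\mathbb{Z}/n\mathbb{Z}}[-1],
\]
where $V = \psi_0^* \mathbb{T}_\CY$. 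On the other side, if $g$ denotes the torsion element corresponding to $\psi_0$, then $\mathbb{T}_{\CL \CY, \eta(\psi)} \simeq \mathrm{fib}(V \xrightarrow{g-1} V) \simeq V^{\mathbb{Z}/n\mathbb{Z}} \oplus V_{\mathbb{Z}/n\mathbb{Z}}[-1]$, and the canonical isomorphism $V^{\mathbb{Z}/n\mathbb{Z}} \cong V_{\mathbb{Z}/n\mathbb{Z}}$ coming from the semisimplicity of $\mathbb{Z}/n\mathbb{Z}$-representations matches the two complexes. Tracking the pullback along $f$ shows it realises precisely this identification, hence is a quasi-isomorphism. Openness of the \'etale locus for morphisms of derived Artin stacks locally of finite presentation then propagates \'etaleness to a Zariski open neighborhood of $\CL_n \CY$.

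For \Cref{item-compare-symplectic}, the splitting $B(\mathbb{Z} \oplus \mathbb{Z}/n\mathbb{Z}) \simeq B\mathbb{Z} \times B(\mathbb{Z}/n\mathbb{Z})$ carries the product $\CO$-orientation of total dimension one, and a Fubini-type argument aligns the AKSZ symplectic form built from this product orientation with the one obtained by iterating \Cref{prop-loop} \Cref{item-loop-AKSZ}. The morphism $f$ preserves these $\CO$-orientations, since $f^* \colon \mathrm{H}^1(B(\mathbb{Z} \oplus \mathbb{Z}/n\mathbb{Z}), \CO) \to \mathrm{H}^1(B\mathbb{Z}, \CO)$ is an isomorphism of one-dimensional spaces (the torsion factor contributes nothing to $\mathrm{H}^1$ in characteristic zero); relative AKSZ functoriality then gives $\eta^* \omega_{\CL \CY} = \omega_{\CL \CL_n \CY}$. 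For \Cref{item-compare-orientation}, the standard trivialization \eqref{eq-loop-trivialization} is determined canonically by the $(-1)$-shifted symplectic structure through the self-duality of the relevant fiber sequence, so the compatibility of symplectic forms established in \Cref{item-compare-symplectic} promotes on $\CU$ to a compatibility of the two standard trivializations.

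I expect the main obstacle to lie in \Cref{item-compare-orientation}: matching the intrinsic fiber sequence on $\CL \CL_n \CY$ involving $\mathbb{L}_{\CL_n \CY}$ with the one pulled back from $\CL \CY$ involving $\mathbb{L}_\CY$ requires a careful AKSZ-level argument to confirm that they induce the same trivialization of $\det(\mathbb{L})$ up to a sign, which may need to be checked by a local computation at points of $\CU$.
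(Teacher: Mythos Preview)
Your treatments of \cref{item-etale} and \cref{item-compare-symplectic} are essentially the paper's: the tangent computation via K\"unneth and semisimplicity of $\mathbb{Z}/n\mathbb{Z}$-representations is exactly the paper's weight-decomposition argument rephrased, and the preservation of the $\CO$-orientation under $u \colon \mathrm{B}\mathbb{Z} \to \mathrm{B}\mathbb{Z} \times \mathrm{B}(\mathbb{Z}/n\mathbb{Z})$ is what the paper invokes for \cref{item-compare-symplectic}.

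The genuine gap is in \cref{item-compare-orientation}, and it is the one you anticipate. The assertion that the trivialization \eqref{eq-loop-trivialization} is ``determined canonically by the $(-1)$-shifted symplectic structure'' does not hold: the symplectic form only yields $\det(\mathbb{L})^{\otimes 2}\cong \CO$, and two orientations compatible with it can differ by a $\mathbb{Z}/2\mathbb{Z}$-local system. What actually pins down each trivialization is the specific fibre sequence, and these are built from different data on the two sides --- one from $r^*\mathbb{L}_{\CY}$, the other from the pullback of $\mathbb{L}_{\CL_n\CY}$ along $\CL\CL_n\CY \to \CL_n\CY$. Compatibility of symplectic forms from \cref{item-compare-symplectic} says nothing about whether $\eta$ intertwines these fibre sequences, so the promotion step is unjustified.

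The paper closes this by the pointwise computation you flag at the end, carried out at an \emph{arbitrary} point $x\in\CU$ rather than only at constant loops. Writing $\tilde{x}^*\mathbb{T}_\CY=\bigoplus_{i\in(\mathbb{Z}/n\mathbb{Z})^{\vee}}\CF_i\boxtimes\CL_i$ with $\CF_i\in\Perf(\mathrm{B}\mathbb{Z})$, \'etaleness of $\eta$ at $x$ forces $\Gamma(\mathrm{B}\mathbb{Z},\CF_i\otimes\CL_i|_{\mathrm{B}\mathbb{Z}})=0$ for $i\neq 0$, so the map on tangent complexes reduces to the tautological isomorphism $\Gamma(\mathrm{B}\mathbb{Z}\times\mathrm{B}(\mathbb{Z}/n\mathbb{Z}),\CF_0\boxtimes\CO)\xrightarrow{\cong}\Gamma(\mathrm{B}\mathbb{Z},\CF_0)$. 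One then writes out the two fibre sequences defining the two determinant trivializations --- both with outer terms $\CF_0|_{\pt}$ (up to the identification $\Gamma(\mathrm{B}(\mathbb{Z}/n\mathbb{Z}),\CF_0|_{\pt}\otimes\CO)\cong\CF_0|_{\pt}$) --- and observes they are literally identified under that isomorphism. This explicit matching of fibre sequences, not an appeal to the symplectic form, is the missing ingredient.
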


\begin{proof}
    To prove \Cref{item-etale}, it suffices to show that $\eta$ is \'etale at each $\mathbb{C}$-valued point $x \in \CL_n \CY \subset \CL \CL_n \CY$.
    Let $\tilde{x} \colon \mathrm{B} \mathbb{Z} \times \mathrm{B} \mathbb{Z} / n \mathbb{Z} \to \CY$ be the map corresponding to $x$.
    Then we have an identification
    \[
     \mathbb{T}_{\CL \CL_n \CY, x} \cong \Gamma (\mathrm{B} \mathbb{Z} \times \mathrm{B} \mathbb{Z} / n \mathbb{Z}, \tilde{x}^* \mathbb{T}_{\CY}), \quad \mathbb{T}_{\CL \CY, \eta(x)} \cong \Gamma (\mathrm{B} \mathbb{Z}, u^* \tilde{x}^* \mathbb{T}_{\CY})
    \]
    where $u \colon \mathrm{B} \mathbb{Z} \to \mathrm{B} \mathbb{Z}  \times \mathrm{B} \mathbb{Z} / n \mathbb{Z}$ is induced by \eqref{eq-ZZn}.
    Since $x$ is a constant loop, the complex $\tilde{x}^* \mathbb{T}_{\CY}$ is of the form $\CO_{ \mathrm{B} \mathbb{Z}} \boxtimes \CF$ for some $\CF \in \mathsf{Perf}(\mathrm{B} \mathbb{Z} / n \mathbb{Z})$.
    We let $\CF = \bigoplus_{i \in (\mathbb{Z} / n \mathbb{Z})^{\vee}} V_i \otimes \CL_i$ be the weight decomposition.
    Then we have 
    \[
        \Gamma (\mathrm{B} \mathbb{Z} \times \mathrm{B} \mathbb{Z} / n \mathbb{Z}, \tilde{x}^* \mathbb{T}_{\CY}) \cong  V_0 \otimes \Gamma (\mathrm{B} \mathbb{Z} \times \mathrm{B} \mathbb{Z} / n \mathbb{Z}, \CO_{ \mathrm{B} \mathbb{Z} \times \mathrm{B} \mathbb{Z} / n \mathbb{Z}} ) \cong V_0 \otimes \Gamma (\mathrm{B} \mathbb{Z}  , \CO_{ \mathrm{B} \mathbb{Z}  } ).
    \]
    On the other hand, we have 
    \[
        \Gamma (\mathrm{B} \mathbb{Z}, u^* \tilde{x}^* \mathbb{T}_{\CY}) \cong  \bigoplus_i V_i \otimes \Gamma (\mathrm{B} \mathbb{Z}  , \CL_i |_{\mathrm{B} \mathbb{Z}} )  \cong V_0 \otimes \Gamma (\mathrm{B} \mathbb{Z}  , \CO_{ \mathrm{B} \mathbb{Z}  } ). 
    \]
    In particular, we see that the natural map $\mathbb{T}_{\CL \CL_n \CY, x} \to \mathbb{T}_{\CL \CY, \eta(x)}$ is invertible as desired.

    For \Cref{item-compare-symplectic}, note that the $(-1)$-shifted symplectic structure on $\CL \CL_n \CY$ is induced by the $\CO$-orientation on $\mathrm{B} \mathbb{Z} \times \mathrm{B} \mathbb{Z} / n \mathbb{Z}$ given by 
    \[
        \Gamma(\mathrm{B} \mathbb{Z} \times \mathrm{B} \mathbb{Z} / n \mathbb{Z}, {\CO}_{\mathrm{B} \mathbb{Z} \times \mathrm{B} \mathbb{Z} / n \mathbb{Z}}) \cong \Gamma(\mathrm{B} \mathbb{Z}, \CO_{\mathrm{B} \mathbb{Z}}) \to \mathbb{C}[-1].
    \]
    Clearly, the map $u \colon \mathrm{B} \mathbb{Z} \to \mathrm{B} \mathbb{Z} \times \mathrm{B} \mathbb{Z} / n \mathbb{Z}$ preserves the $\CO$-orientation, hence we obtain the desired result.

    For \Cref{item-compare-orientation}, take a point $x \in \CL \CL_n \CY$ at which $\eta$ is \'etale and $\tilde{x} \colon \mathrm{B} \mathbb{Z} \times \mathrm{B} \mathbb{Z} / n \mathbb{Z} \to \CY$ be the corresponding map.
    Consider the weight decomposition $\tilde{x}^* \mathbb{T}_{\CY} = \bigoplus_{i \in (\mathbb{Z} / n \mathbb{Z})^{\vee}} \CF_i \boxtimes \CL_i$.
    By the assumption that $\eta$ is \'etale at $x$ and the vanishing of the global section of $\CL_i$ for $i \neq 0$, we have 
    \[
          \Gamma(\mathrm{B} \mathbb{Z}, \CF_i \otimes \CL_i |_{\mathrm{B} \mathbb{Z}}) = 0
    \]
    for $i \neq 0$, and the map $\mathbb{T}_{\CL \CL_n \CY, x} \to \mathbb{T}_{\CL \CY, \eta(x)} $ is identified with
    \[
     \Gamma(\mathrm{B} \mathbb{Z} \otimes \mathrm{B} \mathbb{Z} / n \mathbb{Z}, \CF_0 \boxtimes \CO_{\mathrm{B} \mathbb{Z} / n \mathbb{Z}}) \xrightarrow{\cong} \Gamma(\mathrm{B} \mathbb{Z} , \CF_0).    
    \]
    Recall that the orientations for $\CL \CL_n \CY$ and $\CL \CY$ are defined by the trivializations
    \begin{equation}\label{eq-isom-tangent}
        \det \Gamma(\mathrm{B} \mathbb{Z} \otimes \mathrm{B} \mathbb{Z} / n \mathbb{Z}, \CF_0 \boxtimes \CO_{\mathrm{B} \mathbb{Z} / n \mathbb{Z}}) \cong \mathbb{C}, \quad \det \Gamma(\mathrm{B} \mathbb{Z} , \CF_0) \cong \mathbb{C}
    \end{equation}
    as constructed in \eqref{eq-trivialize-det-loop}, which in turn is constructed by the fibre sequences
    \begin{equation}\label{eq-two-det-trivialization}
        \Gamma(\mathrm{B} \mathbb{Z} / n \mathbb{Z}, \CF_0 |_{\mathrm{pt}} \otimes \CO_{\mathrm{B} \mathbb{Z} / n \mathbb{Z}}) [-1] \to  \Gamma(\mathrm{B} \mathbb{Z} \otimes \mathrm{B} \mathbb{Z} / n \mathbb{Z}, \CF_0 \boxtimes \CO_{\mathrm{B} \mathbb{Z} / n \mathbb{Z}}) \to \Gamma(\mathrm{B} \mathbb{Z} / n \mathbb{Z}, \CF_0 |_{\mathrm{pt}} \otimes \CO_{\mathrm{B} \mathbb{Z} / n \mathbb{Z}}) 
    \end{equation}
    \[
        \CF_0 |_{\mathrm{pt}}[-1] \to \Gamma(\mathrm{B} \mathbb{Z} , \CF_0) \to \CF_0 |_{\mathrm{pt}}.
    \]
    Since the fibre sequences \eqref{eq-two-det-trivialization} are naturally identified under the isomorphism \eqref{eq-isom-tangent}, we conclude that $\eta$ preserves the orientations over the locus on which $\eta$ is \'etale.
\end{proof}

With these preparations, we can prove \Cref{thm-main-0}.

\begin{proof}[Proof of \Cref{thm-main-0}]
    Thanks to \Cref{prop-support}, it suffices to prove an isomorphism
    \[
     \bar{\iota}^* \mathcal{BPS}^{(0)}_{\mathrm{L} Y} \cong  \mathcal{BPS}^{(0)}_{\mathrm{L}_{\mathrm{tor}} Y}.  
    \]
    In particular, it is enough to prove that for each $n \in \mathbb{Z}_{>0}$ there is a natural isomorphism:
    \[
     \bar{\iota}_n^* \mathcal{BPS}^{(0)}_{\mathrm{L} Y} \cong  \mathcal{BPS}^{(0)}_{\mathrm{L}_{n} Y}.  
    \]
    Recall that $\bar{\iota}_n$ is identified with the following composite
    \[
    \mathrm{L}_n Y \hookrightarrow \mathrm{L} \mathrm{L}_n Y \xrightarrow{\bar{\eta}_n}  \mathrm{L} Y
    \]
    where $\bar{\eta}_n$ is induced from $\eta_n$ in \eqref{eq-eta_n}.
    It follows from \Cref{lem-eta-properties} that we have isomorphisms
    \begin{equation}\label{eq-etale-dakara}
     \mathcal{BPS}_{\mathrm{L} \mathrm{L}_n Y}^{(0)} |_{\mathrm{L}_n Y} \cong \bar{\eta}_n^* \mathcal{BPS}_{\mathrm{L} Y}^{(0)}  |_{\mathrm{L}_n Y} \cong \bar{\iota}_n^* \mathcal{BPS}_{\mathrm{L} Y}^{(0)}.
    \end{equation}
    On the other hand, \Cref{prop-compare-BPS} implies
    \begin{equation}\label{eq-main-atochotto}
        \mathcal{BPS}_{\mathrm{L} \mathrm{L}_n Y}^{(0)} |_{\mathrm{L}_n Y} \cong  \mathcal{BPS}_{\mathrm{L}_n Y}^{(0)}.
    \end{equation}
    By combining \eqref{eq-etale-dakara} and \eqref{eq-main-atochotto}, we obtain the desired statement.
\end{proof}

\begin{remark}\label{rem-hodge}
    The perverse pullback functor \cite{khan2025perverse} can be defined at the level of monodromic mixed Hodge modules,
    and the same argument yields an upgrade of \Cref{thm-main-0} to an isomorphism of mixed Hodge modules.
    Since $ \mathcal{BPS}_{\mathrm{L}_{\mathrm{tor}} Y}^{(0)}$ is pure, we see that $\mathcal{BPS}_{\mathrm{L} Y}^{(0)}$ is also pure.
\end{remark}

\subsection{Multiplicative dimensional reduction: general central rank}

We now discuss a generalization of \Cref{thm-main-0} to the $c$-th BPS sheaves.
We do not fully describe the $c$-th BPS sheaf of the loop stacks, but we describe its cohomology by reducing to \Cref{thm-main-0}.

Let $\CY$ be a $0$-shifted symplectic stack with central rank $c$ admitting a good moduli space $p \colon \CY \to Y$.
We fix a $\mathrm{B} \mathbb{G}_{\mathrm{m}}^c$-action on $\CY$ such that for each $y \in \CY$, the induced map $\mathbb{G}_{\mathrm{m}}^c \to G_y$ is injective.
We assume that $\CY$ admits a global equivariant parameter, namely, there exists a map
\[
 a \colon \CY \to \mathrm{B} \mathbb{G}_{\mathrm{m}}^c    
\]
such that for each $y \in \CY$, the composition $\mathrm{B} \mathbb{G}_{\mathrm{m}}^c \times \{ y \} \to \CY \xrightarrow{a} \mathrm{B} \mathbb{G}_{\mathrm{m}}^c $ is finite.
See \cite[\S 9.1.2]{bu2025cohomology} for generalities on global equivariant parameters. The existence of a global equivariant parameter is a very mild assumption: for example, it is satisfied for global quotient stacks.

As explained in \cite[Proposition 3.1.9]{bu2025cohomology}, there is a moment map with respect to the $\mathrm{B} \mathbb{G}_{\mathrm{m}}^c$-action
\[
 \mu \colon \CY \to (\mathbb{A}^1[-1])^c.    
\]
Further, it will be shown by Park--You \cite{ParkYou2025_shiftedSymplecticrigidification} that there exists a $0$-shifted symplectic structure on 
$\bar{\CY} \coloneqq \mu^{-1}(0) / \mathrm{B} \mathbb{G}_{\mathrm{m}}^c$ such that the following diagram is a Lagrangian correspondence
\[\begin{tikzcd}
	& {\mu^{-1}(0)} \\
	\CY && {\bar{\CY}.}
	\arrow[from=1-2, to=2-1]
	\arrow[from=1-2, to=2-3]
\end{tikzcd}\]
Note that the backward map $\mu^{-1}(0) \to \CY$ induces an isomorphism on the classical truncations.
In particular, there exists a natural map $\CY_{\cl} \to \bar{\CY}_{\cl}$ which is a $\mathbb{G}_{\mathrm{m}}^c$-gerbe.
By passing to the loop stack, we obtain a natural map
\[
 q \colon (\CL \CY)_{\cl} \to (\CL \bar{\CY})_{\cl} 
\]
preserving the d-critical structure and the orientation.
Now consider the following diagram
\[\begin{tikzcd}
	{(\CL \CY)_{\cl}} && {(\CL \bar{\CY})_{\cl}} \\
	{(\mathrm{L} Y)_{\cl}} & {(\mathrm{L} Y)_{\cl} / \mathbb{G}_{\mathrm{m}}^c} & {(\mathrm{L} \bar{Y})_{\cl}.}
	\arrow["q", from=1-1, to=1-3]
	\arrow["{\tilde{p}}"', from=1-1, to=2-1]
	\arrow["\lrcorner"{anchor=center, pos=0.125}, draw=none, from=1-1, to=2-2]
	\arrow["{\tilde{p}'}"', from=1-3, to=2-2]
	\arrow["{\tilde{p}_{\mathrm{rig}}}", from=1-3, to=2-3]
	\arrow["{\bar{q}}"', from=2-1, to=2-2]
	\arrow["g"', from=2-2, to=2-3]
\end{tikzcd}\]
Here, the vertical arrows are good moduli morphisms.
Note that the action of $\mathbb{G}_{\mathrm{m}}^c$ on $(\mathrm{L}Y)_{\mathrm{cl}}$ has finite stabilizer groups.
This follows from the fact that the map $(\mathrm{L} Y)_{\cl} \to \mathbb{G}_{\mathrm{m}}^c$ induced by the global equivariant parameter is $\mathbb{G}_{\mathrm{m}}^c$-equivariant.
In particular, the stack $(\mathrm{L} Y)_{\cl} / \mathbb{G}_{\mathrm{m}}^c$ is Deligne--Mumford.

The following statement combined with \Cref{thm-main-0} can be thought of as a multiplicative version of the dimensional reduction for $c$-th BPS sheaves.

\begin{proposition}\label{prop-mult-dim-red-general}
    There is a natural isomorphism
    \[
    g_* \bar{q}_* \mathcal{BPS}^{(c)}_{\mathrm{L}Y} \cong  \mathcal{BPS}^{(0)}_{\mathrm{L}\bar{Y}} \otimes \mathrm{H}^*(\mathbb{G}_{\mathrm{m}}^c)[c].
    \]
    In particular, we have an isomorphism
    \[
     \mathrm{H}^* (\mathrm{L}Y, \mathcal{BPS}^{(c)}_{\mathrm{L}Y}) \cong \mathrm{H}^*(\mathrm{L}\bar{Y}, \mathcal{BPS}^{(0)}_{\mathrm{L}\bar{Y}} ) \otimes \mathrm{H}^*(\mathbb{G}_{\mathrm{m}}^c)[c].   
    \]
\end{proposition}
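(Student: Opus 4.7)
My plan is to reduce the statement to the sheaf-level content of \Cref{thm-main-0} applied to the rigidification $\bar{\CY}$, which has central rank zero. The main inputs are the preservation of the d-critical structure and orientation by $q$ (asserted in the paragraph preceding the proposition), the Cartesianness of the left-hand square in the displayed diagram with $\bar{q}$ a $\mathbb{G}_{\mathrm{m}}^c$-torsor, and smooth base change for vanishing cycles.

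The central computation starts with the natural identification $\varphi_{\CL\CY} \cong q^{*}\varphi_{\CL\bar{\CY}}[c]$ coming from Khan's perverse pullback formalism \cite{khan2025perverse} applied to $q$ (smooth of relative dimension $c$ as a base change of $\bar{q}$). Applying $\tilde{p}_{*}$ with smooth base change yields $\tilde{p}_{*}\varphi_{\CL\CY} \cong \bar{q}^{*}\tilde{p}'_{*}\varphi_{\CL\bar{\CY}}[c]$. Applying $\bar{q}_{*}$ with the $\mathbb{G}_{\mathrm{m}}^c$-torsor projection formula $\bar{q}_{*}\bar{q}^{*} \cong (-)\otimes \mathrm{H}^{*}(\mathbb{G}_{\mathrm{m}}^c)$, and then $g_{*}$ using $\tilde{p}_{\mathrm{rig}} = g\circ\tilde{p}'$, gives
\[
    g_{*}\bar{q}_{*}\tilde{p}_{*}\varphi_{\CL\CY} \cong \tilde{p}_{\mathrm{rig},*}\varphi_{\CL\bar{\CY}} \otimes \mathrm{H}^{*}(\mathbb{G}_{\mathrm{m}}^c)[c]
\]
as complexes on $(\mathrm{L}\bar{Y})_{\cl}$.

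To extract the BPS isomorphism, I would interpret the $c$-th perverse cohomology on each side: on the left, $\bar{q}_{*}\mathcal{BPS}^{(c)}_{\mathrm{L}Y}$ arises because $\bar{q}^{*}[c]$ is perverse t-exact and $g$ is a coarse moduli map (so $g_{*}$ is t-exact), allowing the operations $g_{*}\bar{q}_{*}$ to commute with ${}^{\mathrm{p}}{\CH}^{c}$; on the right, the shift $[c]$ matching the relative dimension of $\bar{q}$ absorbs the perverse-degree shift, converting the degree-$c$ cohomology into ${}^{\mathrm{p}}{\CH}^{0}(\tilde{p}_{\mathrm{rig},*}\varphi_{\CL\bar{\CY}}) = \mathcal{BPS}^{(0)}_{\mathrm{L}\bar{Y}}$ up to the tensor with $\mathrm{H}^{*}(\mathbb{G}_{\mathrm{m}}^c)[c]$. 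The cohomology isomorphism then follows by applying $\mathrm{R}\Gamma(\mathrm{L}\bar{Y},-)$ to the sheaf-level isomorphism, using $\mathrm{R}\Gamma(\mathrm{L}\bar{Y}, g_{*}\bar{q}_{*}(-)) = \mathrm{R}\Gamma(\mathrm{L}Y,-)$ and that tensoring with a complex of vector spaces commutes with $\mathrm{R}\Gamma$.

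The main obstacle I anticipate is the careful bookkeeping of shifts in perverse cohomology. Specifically, one must verify that the shift $[c]$ from the relative dimension of the torsor $\bar{q}$ aligns exactly with the degree difference between $\mathcal{BPS}^{(c)}_{\mathrm{L}Y}$ and $\mathcal{BPS}^{(0)}_{\mathrm{L}\bar{Y}}$, and that the Leray-type identification $\bar{q}_{*}\bar{q}^{*} \cong (-)\otimes \mathrm{H}^{*}(\mathbb{G}_{\mathrm{m}}^c)$ feeds through the perverse t-structure correctly. This is essentially the statement that the central-rank-$c$ contribution to the BPS sheaf of the loop stack is a $\mathbb{G}_{\mathrm{m}}^c$-torsor-equivariantly constant extension of the central-rank-zero BPS sheaf on the rigidified stack.
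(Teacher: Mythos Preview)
Your approach is essentially the paper's, but two steps are misstated in ways that would not go through as written.

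First, the isomorphism $\bar{q}_*\bar{q}^*(-)\cong(-)\otimes\mathrm{H}^*(\mathbb{G}_{\mathrm{m}}^c)$ is \emph{not} a general ``$\mathbb{G}_{\mathrm{m}}^c$-torsor projection formula''. For a $\mathbb{G}_{\mathrm{m}}$-torsor, $\bar{q}_*\bar{q}^*\CF$ sits in a triangle governed by the Euler class, and the splitting $\CF\oplus\CF[-1]$ only holds when that class vanishes cohomologically. The paper obtains the splitting via Leray--Hirsch, using the map $\mathrm{L}Y\to\mathbb{G}_{\mathrm{m}}^c$ induced by the global equivariant parameter $a$; you need to invoke that map explicitly.

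Second, your extraction step asserts that $g_*\bar{q}_*$ commutes with ${}^{\mathrm{p}}\CH^c$, but $\bar{q}_*$ is not perverse $t$-exact (it has amplitude $[0,c]$), so ${}^{\mathrm{p}}\CH^c(g_*\bar{q}_*\tilde{p}_*\varphi_{\CL\CY})\neq g_*\bar{q}_*\mathcal{BPS}^{(c)}_{\mathrm{L}Y}$ in general. The correct order, as in the paper, is the one suggested by the ingredients you already list: use $t$-exactness of $\bar{q}^*[c]$ to identify $\mathcal{BPS}^{(c)}_{\mathrm{L}Y}=\bar{q}^*[c]\,{}^{\mathrm{p}}\CH^0(\tilde{p}'_*\varphi_{\CL\bar{\CY}})$ \emph{first}, then apply $\bar{q}_*$ (now Leray--Hirsch gives a single perverse sheaf tensored with $\mathrm{H}^*(\mathbb{G}_{\mathrm{m}}^c)[c]$), and finally apply $g_*$ using its $t$-exactness to obtain $g_*\,{}^{\mathrm{p}}\CH^0(\tilde{p}'_*\varphi_{\CL\bar{\CY}})={}^{\mathrm{p}}\CH^0(\tilde{p}_{\mathrm{rig},*}\varphi_{\CL\bar{\CY}})=\mathcal{BPS}^{(0)}_{\mathrm{L}\bar{Y}}$.

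A minor remark: despite your opening sentence, \Cref{thm-main-0} is not actually used anywhere in this argument (and the paper's proof does not invoke it either); the proposition is a formal reduction to the rigidified stack, independent of the central-rank-zero dimensional reduction theorem.
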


\begin{proof}
    Since the map $q$ preserves the d-critical structures and orientations, we have
    \[
     \varphi_{\CL \CY} \cong q^{*}\varphi_{ \CL  \bar{ \CY}}[c].    
    \]
    In particular, we have
    \[
     \tilde{p}_*     \varphi_{\CL \CY} \cong \bar{q}^* \tilde{p}'_*\varphi_{ \CL  \bar{ \CY}}[c].
    \]
    We define $\mathcal{BPS}^{(0)}_{\mathrm{L}Y /\mathbb{G}_{\mathrm{m}}^c} \coloneqq {}^p \CH^0(\tilde{p}'_*\varphi_{ \CL  \bar{ \CY}})$. Then the above isomorphism yields
    \begin{equation}\label{eq-BPS-pullback}
        \mathcal{BPS}^{(c)}_{\mathrm{L}Y} \cong \bar{q}^* \mathcal{BPS}^{(0)}_{\mathrm{L}Y /\mathbb{G}_{\mathrm{m}}^c}[c].
    \end{equation}
    On the other hand, using the fact that $g$ is the good moduli space morphism for Deligne--Mumford stack, we see that $g_*$ is perverse t-exact and
    \begin{equation}\label{eq-BPS-push}
     g_*  \mathcal{BPS}^{(0)}_{ \mathrm{L}Y /\mathbb{G}_{\mathrm{m}}^c} \cong \mathcal{BPS}^{(0)}_{\mathrm{L}\bar{Y}}.
    \end{equation}
    Also, using the morphism $\mathrm{L} Y \to \mathbb{G}_{\mathrm{m}}^c$ induced by the global equivariant parameter and the Leray--Hirsch theorem, we obtain a natural isomorphism of functors
    \begin{equation}\label{eq-leray-hirsch}
     \bar{q}_* \bar{q}^* (-) \cong (-) \otimes \mathrm{H}^*( \mathbb{G}_{\mathrm{m}}^c).    
    \end{equation}
    Combining \eqref{eq-BPS-pullback}, \eqref{eq-BPS-push} and \eqref{eq-leray-hirsch}, we obtain the desired statement.
\end{proof}

\begin{remark}
    When $\CY$ is the moduli stack of objects in a certain $2$-Calabi--Yau category, Kaubrys \cite[Theorem 1.4]{kaubrys2025exponential} gives an explicit description of the BPS sheaf for $\mathrm{L} Y$, which implies
     \Cref{prop-compare-BPS} (for $\CY /\mathrm{B} \mathbb{G}_{\mathrm{m}}$) and \Cref{prop-mult-dim-red-general}. His method is different from ours and uses the exponential maps and derived analytic geometry.
     We note that our argument for \Cref{prop-mult-dim-red-general} gives an alternative proof to that statement.
\end{remark}

\section{Applications to \texorpdfstring{$G$}{$G$}-Higgs bundles}

In this section, we apply \Cref{thm-main-0} to the BPS cohomology of the moduli space of semistable $G$-Higgs bundles.
For this, we recall the notion of quasi-isolated elements from \cite{bonnafe2005quasi}.
Let $G$ be a connected semisimple group. A semisimple element $g \in G$ is called \emph{quasi-isolated} if the centralizer $\mathrm{C}_G(g)$ has finite center.
We let $G_{\mathrm{qis}} \subset G$ denote the subset of quasi-isolated elements.
It follows from \cite[Corollary 2.13]{bonnafe2005quasi} that there is an inclusion $G_{\mathrm{qis}}  \subset G_{\mathrm{tor}} $.
The number of conjugacy classes in $G_{\mathrm{qis}}$ is finite and there is an explicit classification: see \cite[Theorem 5.1]{bonnafe2005quasi}.
For example, when $G$ is simply connected, $G$-conjugacy classes of quasi-isolated elements bijectively correspond to vertices of the extended Dynkin diagram.

For a torsion element $g \in G$, consider the following diagram:
\[\begin{tikzcd}
	{\CH_{\mathrm{C}_G(g), \textnormal{$0$-type}}^{\mathrm{ss}}} & {\CH_G^{\mathrm{ss}}} & {\CL \CH_G^{\mathrm{ss}}} \\
	{\mathrm{H}_{\mathrm{C}_G(g), \textnormal{$0$-type}}^{\mathrm{ss}}} & {\mathrm{H}_G^{\mathrm{ss}}} & {\mathrm{L}\mathrm{H}_G^{\mathrm{ss}}.}
	\arrow["{\iota_g}", from=1-1, to=1-2]
	\arrow["{p_{g}}", from=1-1, to=2-1]
	\arrow["{p_G}", from=1-2, to=2-2]
	\arrow["{r_G}"', from=1-3, to=1-2]
	\arrow["{\tilde{p}_G}", from=1-3, to=2-3]
	\arrow["{\bar{\iota}_g}"', from=2-1, to=2-2]
	\arrow["{\bar{r}_G}", from=2-3, to=2-2]
\end{tikzcd}\]
Here, the vertical arrows are good moduli morphisms and the left horizontal arrows are well-defined thanks to \Cref{prop-torsionloop-HGss}.
We have the following statement as an application of multiplicative dimensional reduction.

\begin{theorem}\label{thm-mult-dim-red-HGss}
    There is a natural isomorphism
    \[
     \bar{r}_{G, *} \mathcal{BPS}^{(0)}_{\mathrm{LH}_{G}^{\mathrm{ss}}} \cong \bigoplus_{[g] \in G_{\mathrm{qis}} / G}  \bar{\iota}_{g, *} \mathcal{BPS}^{(0)}_{\mathrm{H}_{\mathrm{C}_G(g), \textnormal{$0$-type}}^{\mathrm{ss}}}.
    \]
\end{theorem}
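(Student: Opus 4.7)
The plan is to chain \Cref{thm-main-0} (applied to $\CY = \CH_G^{\mathrm{ss}}$) with the stack-level decomposition of the torsion loop stack in \Cref{prop-torsionloop-HGss}, and then to discard the summands indexed by non-quasi-isolated classes using the support vanishing in \Cref{prop-supp-lem}.

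For the first step, since $G$ is connected semisimple, \Cref{prop-torsionloop-HGss} identifies
\[
\CL_{\mathrm{tor}} \CH_G^{\mathrm{ss}} \cong \coprod_{[g] \in G_{\mathrm{tor}}/G} \CH_{\mathrm{C}_G(g),\textnormal{$0$-type}}^{\mathrm{ss}}
\]
as $0$-shifted symplectic stacks (the AKSZ structure restricts to the natural one on each component), and this descends to a disjoint union decomposition of the good moduli space $\mathrm{L}_{\mathrm{tor}}\mathrm{H}_G^{\mathrm{ss}}$. Additivity of the BPS sheaf construction across disjoint unions then gives
\[
\mathcal{BPS}^{(0)}_{\mathrm{L}_{\mathrm{tor}}\mathrm{H}_G^{\mathrm{ss}}} \cong \bigoplus_{[g] \in G_{\mathrm{tor}}/G} \mathcal{BPS}^{(0)}_{\mathrm{H}_{\mathrm{C}_G(g),\textnormal{$0$-type}}^{\mathrm{ss}}}.
\]
Combined with \Cref{thm-main-0}, which gives $\mathcal{BPS}^{(0)}_{\mathrm{LH}_G^{\mathrm{ss}}} \cong \bar{\iota}_{*}\mathcal{BPS}^{(0)}_{\mathrm{L}_{\mathrm{tor}}\mathrm{H}_G^{\mathrm{ss}}}$, and the observation that $\bar{r}_G \circ \bar{\iota}$ restricts to $\bar{\iota}_g$ on the $[g]$-summand, pushing forward under $\bar{r}_G$ yields the claimed formula but with the sum still indexed by $G_{\mathrm{tor}}/G$.

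The remaining step --- and the only one with genuinely new content --- is to show the summand vanishes whenever $g \in G_{\mathrm{tor}}$ is not quasi-isolated. In that case the torus $T \coloneqq \mathrm{Z}(\mathrm{C}_G(g))^{\circ}$ has positive dimension; it sits in the automorphism group of every $\mathrm{C}_G(g)$-Higgs bundle $E$ and, being central, acts trivially on the tangent complex of $\CH_{\mathrm{C}_G(g),\textnormal{$0$-type}}^{\mathrm{ss}}$ at $E$ (this tangent complex is computed from the $\mathrm{ad}$-twisted Higgs complex, on which a central element acts trivially). Passing to $\hat{\CH}_{\mathrm{C}_G(g),\textnormal{$0$-type}}^{\mathrm{ss}}$, this gives at each closed point $x$ a central torus $T \subset \mathrm{Z}(G_x^{\circ})$ acting trivially on $\mathrm{H}^0$ of the tangent complex, so \Cref{prop-supp-lem}~(ii) forces $\mathcal{BPS}^{(0)}_{\hat{\mathrm{H}}_{\mathrm{C}_G(g),\textnormal{$0$-type}}^{\mathrm{ss}}}$ to vanish stalkwise, hence globally, and therefore $\mathcal{BPS}^{(0)}_{\mathrm{H}_{\mathrm{C}_G(g),\textnormal{$0$-type}}^{\mathrm{ss}}} = 0$. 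This eliminates the non-quasi-isolated summands and concludes the proof. All the heavy geometric inputs are already available, so the only piece requiring care is bookkeeping the AKSZ $0$-shifted symplectic structure and orientation on $\CL_{\mathrm{tor}}\CH_G^{\mathrm{ss}}$ with respect to the decomposition of \Cref{prop-torsionloop-HGss}, which is straightforward from the construction.
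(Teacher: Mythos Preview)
Your proposal is correct and follows essentially the same route as the paper: apply \Cref{thm-main-0} together with \Cref{prop-torsionloop-HGss} to obtain the decomposition indexed by $G_{\mathrm{tor}}/G$, then eliminate the non-quasi-isolated summands via \Cref{prop-supp-lem} using the positive-dimensional central torus $\mathrm{Z}(\mathrm{C}_G(g))^{\circ}$. Your presentation is slightly more explicit in spelling out why the central torus acts trivially on the tangent complex and in passing through $\hat{\CH}_{\mathrm{C}_G(g),\textnormal{$0$-type}}^{\mathrm{ss}}$, but the argument is the same.
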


\begin{proof}
    By \Cref{prop-torsionloop-HGss} and \Cref{thm-main-0}, we obtain
    \[
        \bar{r}_{G, *} \mathcal{BPS}^{(0)}_{\mathrm{LH}_{G}^{\mathrm{ss}}} \cong \bigoplus_{[g] \in G_{\mathrm{tor}} / G}  \bar{\iota}_{g, *} \mathcal{BPS}^{(0)}_{\mathrm{H}_{\mathrm{C}_G(g), \textnormal{$0$-type}}^{\mathrm{ss}}}.
    \]
    Therefore it suffices to show that for any $g \in G_{\mathrm{tor}} \setminus G_{\mathrm{qis}}$, the following vanishing holds:
    \[
        \mathcal{BPS}^{(0)}_{\mathrm{H}_{\mathrm{C}_G(g)}^{\mathrm{ss}}} = 0.
    \]
    Since $g$ is not quasi-isolated, $\mathrm{Z}(\mathrm{C}_G(g))$ contains a non-trivial torus.
    In particular, for each closed point $x \in \mathcal{H}_{\mathrm{C}_G(g)}^{\mathrm{ss}}$, there exists a non-trivial torus in the stabilizer group whose action on the tangent complex is trivial.
    Then the vanishing follows from \Cref{prop-supp-lem}.
\end{proof}


\begin{remark}\label{rem-general-reductive}
    For a general connected reductive group $G$, we can study the BPS cohomology for $\mathrm{LH}_{G}^{\mathrm{ss}}$ by reducing to the case of semisimple groups as follows.
    Let $T' \subset G$ be the neutral component of the center $\mathrm{Z}(G)$ and set $c \coloneqq \dim T'$ and $\bar{G} \coloneqq G /T'$.
    Take $d \in \pi_1(G)_{\mathbb{Q}}$ and $\bar{d} \in \pi_1(\bar{G})_{\mathbb{Q}}$ be its image. Consider the following commutative diagram:
    \[\begin{tikzcd}
        {\CL\CH_{G, d}^{\mathrm{ss}}} & {\CL\CH_{\bar{G}, \bar{d}}^{\mathrm{ss}}} \\
        {\mathrm{LH}_{G, d}^{\mathrm{ss}}} & {\mathrm{LH}_{\bar{G}, \bar{d}}^{\mathrm{ss}}.}
        \arrow["q", from=1-1, to=1-2]
        \arrow[from=1-1, to=2-1]
        \arrow[from=1-2, to=2-2]
        \arrow["{\bar{q}}"', from=2-1, to=2-2]
    \end{tikzcd}\]
    One easily sees that the map $q \colon \CL\CH_{G, d}^{\mathrm{ss}} \to \CL\CH_{\bar{G}, \bar{d}}^{\mathrm{ss}}$ is a principal $(\CL\mathrm{T}^* \mathrm{Pic}^0(C) \times \mathbb{G}_{\mathrm{m}} \times \mathrm{B}\mathbb{G}_{\mathrm{m}})^c$-bundle onto its image $(\CL\CH_{\bar{G}, \bar{d}}^{\mathrm{ss}})_{, \textnormal{liftable}} \subset \CL\CH_{\bar{G}, \bar{d}}^{\mathrm{ss}}$.
    Note that the abelianization $G \to G_{\mathrm{ab}}$ induces a map 
    \[
        \CL\CH_{G, d}^{\mathrm{ss}} \to (\CL \mathrm{T}^* \mathrm{Pic}^0(C) \times \mathbb{G}_{\mathrm{m}} \times \mathrm{B}\mathbb{G}_{\mathrm{m}})^c.
    \]
    Therefore we can use the Leray--Hirsch theorem to obtain
    \[
     q_* \varphi_{\CL\CH_{G, d}^{\mathrm{ss}}} \cong \varphi_{(\CL\CH_{\bar{G}, \bar{d}}^{\mathrm{ss}})_{\textnormal{liftable}}} \otimes \mathrm{H}^* \left( \mathrm{Pic}^0(C) \times \mathbb{G}_{\mathrm{m}} \times \mathrm{B}\mathbb{G}_{\mathrm{m}}\right)^{\otimes c}[2c \cdot \dim \mathrm{Pic}^0(C)].  
    \]
    Similarly, we have
    \[
     \bar{q}_* \mathcal{BPS}^{(c)}_{\mathrm{LH}_{G, d}^{\mathrm{ss}}} \cong \mathcal{BPS}^{(0)}_{(\mathrm{LH}_{\bar{G}, \bar{d}}^{\mathrm{ss}})_{\textnormal{liftable}}} \otimes  \mathrm{H}^* \left( \mathrm{Pic}^0(C) \times \mathbb{G}_{\mathrm{m}}\right)^{\otimes c}[2c \cdot \dim \mathrm{Pic}^0(C) + c]. 
    \]
    See the proof of \Cref{prop-mult-dim-red-general} for an analogous discussion.
\end{remark}

For semisimple and connected $G$, we define the \emph{stringy BPS cohomology} for $\mathrm{H}_G^\mathrm{ss}$ by
\[
 \mathrm{H}^{ *}_{\mathrm{BPS}, \mathrm{st}} (\mathrm{H}_G^\mathrm{ss}) \coloneqq \bigoplus_{[g] \in G_{\mathrm{qis}} / G} \mathrm{H}^* \left(\mathrm{H}^{\mathrm{ss}}_{\mathrm{C}_G(g), \textnormal{$0$-type}}, \mathcal{BPS}^{(0)}_{\mathrm{H}_{\mathrm{C}_G(g), \textnormal{$0$-type}}^{\mathrm{ss}}}\right).
\]
\Cref{thm-mult-dim-red-HGss} implies
\begin{equation}\label{eq-stringy-loop}
    \mathrm{H}^*(\mathrm{LH}_G^{\mathrm{ss}}, \mathcal{BPS}^{(0)}_{\mathrm{LH}_G^{\mathrm{ss}}}) \cong \mathrm{H}^{ *}_{\mathrm{BPS}, \mathrm{st}}(\mathrm{H}_G^\mathrm{ss}).
\end{equation}

In \cite[Conjecture 10.3.18]{bu2025cohomology}, we formulated the topological mirror symmetry conjecture for the moduli space of $G$-Higgs bundles using the BPS cohomology of the good moduli space of the loop stack.
The isomorphism \eqref{eq-stringy-loop} implies that the following formulation is equivalent to \cite[Conjecture 10.3.18]{bu2025cohomology}.

\begin{conjecture}\label{conj-stringy-BPS}
    Let $C$ be a smooth projective curve, $G$ be a connected semisimple group and $G^{\vee}$ its Langlands dual. Then there exists a natural isomorphism
    \[
        \mathrm{H}^{ *}_{\mathrm{BPS}, \mathrm{st}} (\mathrm{H}_G^\mathrm{ss}) \cong \mathrm{H}^{ *}_{\mathrm{BPS}, \mathrm{st}} (\mathrm{H}_{G^{\vee}}^\mathrm{ss}).  
    \]
\end{conjecture}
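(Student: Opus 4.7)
The plan is to decompose the stringy BPS cohomology via \Cref{thm-mult-dim-red-HGss} and then match the summands on the two sides using endoscopy. I would first establish a natural bijection $\Phi \colon G_{\mathrm{qis}}/G \xrightarrow{\sim} G^{\vee}_{\mathrm{qis}}/G^{\vee}$ between quasi-isolated conjugacy classes, and then, for each $[g]$ with $\Phi([g]) = [g^{\vee}]$, exhibit a Langlands-type duality relating the centralizers so that $\mathrm{C}_G(g)^{\vee}$ matches $\mathrm{C}_{G^{\vee}}(g^{\vee})$ up to issues of the centre and component group. Bonnafé's classification \cite[Theorem~5.1]{bonnafe2005quasi} expresses these quasi-isolated classes in terms of the extended Dynkin diagram together with a character of the fundamental group; since the affine Dynkin data of $G$ and $G^{\vee}$ interchange under swapping roots and coroots, such a matching should be natural. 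This is exactly where the theory of elliptic endoscopic data (acknowledged in the introduction) enters: quasi-isolated classes in $G$ index elliptic endoscopic groups for $G^{\vee}$, and vice versa.

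Having reduced to a family of individual mirror symmetry statements $\mathrm{H}^*(\mathrm{H}^{\mathrm{ss}}_{\mathrm{C}_G(g), \textnormal{$0$-type}}, \mathcal{BPS}^{(0)}) \cong \mathrm{H}^*(\mathrm{H}^{\mathrm{ss}}_{\mathrm{C}_{G^{\vee}}(g^{\vee}), \textnormal{$0$-type}}, \mathcal{BPS}^{(0)})$, I would next apply the non-semisimple refinement sketched in \Cref{rem-general-reductive} to peel off the central torus of each reductive centralizer, reducing to the case of semisimple (possibly disconnected) groups. The resulting statement is essentially a sheaf-theoretic Hausel--Thaddeus conjecture for arbitrary type and arbitrary fixed topological type. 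To prove it, I would try to extend the $p$-adic integration method of Groechenig--Wyss--Ziegler, or the motivic/endoscopic decomposition approach of Maulik--Shen, from the coprime type $A$ setting to general reductive groups; combined with the SYZ-duality of Hitchin systems of Donagi--Pantev, this should furnish the fibrewise Fourier--Mukai transform needed to upgrade equality of stringy $E$-polynomials to an isomorphism of BPS cohomologies. Ngô's endoscopic decomposition of the Hitchin fibration is the natural structural input that should realize the index bijection $\Phi$ at the geometric level.

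The main obstacle is twofold. First, even for a single matched pair $(\mathrm{C}_G(g), \mathrm{C}_{G^{\vee}}(g^{\vee}))$, full mirror symmetry between their Higgs moduli is open beyond type $A$ in this generality; $p$-adic integration yields only the stringy $E$-polynomial identity, and promoting this to an identification of BPS sheaves will require constructing an honest Fourier--Mukai kernel on (an open subset of) the Hitchin base and verifying compatibility with the perverse/BPS filtration. Second, the endoscopic matching must respect the zero-type condition on rational degrees as well as the disconnectedness of centralizers; checking these compatibilities at the level of BPS sheaves, rather than merely their cohomology, is the conceptually hardest part, since the definition of $\mathcal{BPS}^{(0)}$ for disconnected reductive groups requires careful handling of the component-group action and of the induction morphisms entering the BPS decomposition of \cite{bu2025cohomology}. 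I expect significant new geometric Langlands input — an explicit, dualizable description of the endoscopic decomposition of $\mathrm{H}_G$ matching that of $\mathrm{H}_{G^{\vee}}$ — will be needed to close these gaps.
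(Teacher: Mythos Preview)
The statement you are attempting to prove is labelled a \emph{Conjecture} in the paper, and the paper does not provide a proof. What the paper does is show, via \eqref{eq-stringy-loop}, that this formulation is equivalent to the previously stated \cite[Conjecture~10.3.18]{bu2025cohomology}; beyond that, it only sketches (in the paragraphs following \Cref{conj-stringy-IH}) how a sheaf-theoretic version of the logarithmic conjecture would imply this one via the vanishing-cycle trick of Maulik--Shen. There is therefore no ``paper's own proof'' to compare your proposal against.

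Your proposal is not a proof but a plausible research outline, and you say as much in your final paragraph. The ingredients you identify --- Bonnaf\'e's classification of quasi-isolated classes, elliptic endoscopic data, the Donagi--Pantev SYZ duality, the Groechenig--Wyss--Ziegler $p$-adic integration, and Ng\^o's endoscopic decomposition --- are reasonable. But several steps are genuinely open: the bijection $\Phi$ between quasi-isolated classes of $G$ and $G^{\vee}$ compatible with Langlands duality of centralizers is not established in the literature in the form you need (centralizers of quasi-isolated elements are not in general Langlands dual to one another, even up to centre and component group), and the promotion of stringy $E$-polynomial identities to isomorphisms of BPS sheaves is exactly the hard part that remains open even in type $A$ beyond the coprime case. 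Your honest acknowledgement of these gaps is appropriate; just be aware that what you have written is a strategy for attacking an open conjecture, not a proof.
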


An advantage of using the stringy BPS cohomology rather than the BPS cohomology of the loop space is that one can reduce to the logarithmic case (i.e., $K_C(D)$-twisted case for $D > 0$) using the vanishing cycle trick as initiated by Maulik--Shen \cite{maulik2021endoscopic} and further investigated by the author and collaborators \cite{kinjo2024cohomological,kinjo2024global}.
We briefly explain the idea here. For simplicity, we will assume that $g(C) \geq 2$ from now on.

Let $\CL$ be a line bundle with $\deg \CL > 2 g(C) - 2$ and $\CH_{G, \CL}^{\mathrm{ss}}$ denote the moduli stack of $\CL$-twisted semistable $G$-Higgs bundles on $C$.
It is known that $\CH_{G, \CL}^{\mathrm{ss}}$ is smooth and admits a good moduli space $p_{G, \CL} \colon \CH_{G, \CL}^{\mathrm{ss}} \to \mathrm{H}_{G, \CL}^{\mathrm{ss}}$.
Assume that $G$ is semisimple and $g \in G$ be a semisimple element. We define
\[
    \overline{\CH}_{\mathrm{C}_G(g), \CL}^{\mathrm{st}} \subset \CH_{\mathrm{C}_G(g), \CL}^{\mathrm{ss}}
\]
the union of connected components which contains a point with finite stabilizer groups.
Let $\overline{\mathrm{H}}_{\mathrm{C}_G(g), \CL}^{\mathrm{st}} $ be the good moduli space and we define the stringy intersection cohomology as
\[
 \mathrm{IH}_{\mathrm{st}}^*(\mathrm{H}_{G, \mathcal{L}}^{\mathrm{ss}})  \coloneqq \bigoplus_{[g] \in G_{\mathrm{qis}} / G} \mathrm{IH}^* \left(\overline{\mathrm{H}}^{\mathrm{st}}_{\mathrm{C}_G(g), \CL,  \textnormal{$0$-type}} \right).
\]
We conjecture the following logarithmic version of the topological mirror symmetry.
\begin{conjecture}\label{conj-stringy-IH}
    Let $C$ be a smooth projective curve with $g(C)\geq 2$ and $G$ be a connected semisimple group. Then there exists a natural isomorphism
    \[
        \mathrm{IH}_{\mathrm{st}}^*(\mathrm{H}_{G, \CL}^\mathrm{ss}) \cong \mathrm{IH}_{\mathrm{st}}^*(\mathrm{H}_{G^{\vee}, \CL}^\mathrm{ss}).  
    \]
\end{conjecture}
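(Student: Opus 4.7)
The plan is to adapt the endoscopic decomposition strategy of Maulik--Shen \cite{maulik2021endoscopic} to the stringy framework. Summand-by-summand, \Cref{conj-stringy-IH} reduces to matching the intersection cohomology $\mathrm{IH}^*(\overline{\mathrm{H}}^{\mathrm{st}}_{\mathrm{C}_G(g), \CL, \textnormal{$0$-type}})$ with that attached to a Langlands-dual pair of quasi-isolated elements $g^{\vee} \in G^{\vee}$. The first step is therefore to establish a Langlands-duality-preserving bijection on conjugacy classes of quasi-isolated elements, using Bonnafé's classification \cite{bonnafe2005quasi} via vertices of the extended Dynkin diagram together with the canonical identification of the affine Dynkin diagrams of $G$ and $G^{\vee}$; this pairing should identify $\mathrm{C}_G(g)$ with a Langlands dual (up to isogeny, and possibly up to a character twist) of $\mathrm{C}_{G^{\vee}}(g^{\vee})$. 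Having matched the summands, one would then attempt each term-wise equality by an induction on the semisimple rank of the centralizer, using \Cref{rem-general-reductive} to peel off the abelian factors governed by $\mathrm{Pic}^0(C)$.

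The second step is to bridge the Dolbeault/BPS formulation of \Cref{conj-stringy-BPS} with the logarithmic intersection-cohomology formulation. By the vanishing cycle technique of \cite{kinjo2024cohomological, kinjo2024global}, one expects $\mathcal{BPS}^{(0)}_{\mathrm{H}_{\mathrm{C}_G(g), \textnormal{$0$-type}}^{\mathrm{ss}}}$ to be realizable as the vanishing cycle of a natural global function on the smooth moduli stack $\CH^{\mathrm{ss}}_{\mathrm{C}_G(g), \CL}$, in which case the intersection complex of $\overline{\mathrm{H}}^{\mathrm{st}}_{\mathrm{C}_G(g), \CL, \textnormal{$0$-type}}$ appears as a direct summand of the corresponding pushforward. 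Combined with the support theorem for the twisted Hitchin fibration (available once $\deg \CL$ is sufficiently positive), this would translate a BPS-level duality isomorphism into an IH-level one, giving the implication \Cref{conj-stringy-BPS} $\Rightarrow$ \Cref{conj-stringy-IH} up to a precise control of the abelian factors.

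The hard part is producing the duality isomorphism itself, rather than merely packaging it. Even the original Hausel--Thaddeus conjecture for $\mathrm{SL}_n$ and $\mathrm{PGL}_n$ required either the $p$-adic integration argument of Groechenig--Wyss--Ziegler or the endoscopy plus $\chi$-independence machinery of Maulik--Shen, and for general $G$ no canonical matching of Hitchin systems for $\mathrm{C}_G(g)$ and $\mathrm{C}_{G^{\vee}}(g^{\vee})$ is currently available in the literature. I expect the main obstacle to be the construction, intrinsically on the Hitchin base, of a geometric correspondence --- most plausibly a relative Fourier--Mukai transform along the Hausel--Thaddeus SYZ fibration --- implementing the desired isomorphism at the level of intersection complexes and inducing precisely the Langlands pairing of quasi-isolated elements dictated by Bonnafé's classification.
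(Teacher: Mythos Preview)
The statement is a \emph{Conjecture} in the paper, and the paper does not prove it. What the paper does, immediately after stating \Cref{conj-stringy-IH}, is sketch the \emph{reverse} implication: a sheaf-theoretic form of \Cref{conj-stringy-IH} (as an isomorphism over the Hitchin base) implies \Cref{conj-stringy-BPS}. The mechanism is the vanishing-cycle trick of \cite{maulik2021endoscopic,kinjo2024cohomological,kinjo2024global}: there is a function $f$ on $\mathcal{H}^{\mathrm{ss}}_{G,\CL}$ with $\Crit(f) \cong \mathrm{T}^*[-1]\CH_G$, descending to a function $f_B$ on the Hitchin base, and applying $\varphi_{\bar f}$ to the stringy intersection complex on the logarithmic side recovers the stringy BPS sheaf on the untwisted side. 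Information therefore flows from IH to BPS, not from BPS to IH.

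Your second step runs this backwards: you want a BPS-level duality to imply an IH-level one. That direction does not follow from vanishing cycles, since $\varphi_f$ discards everything supported away from the critical locus, so an isomorphism after applying $\varphi_f$ does not lift to an isomorphism of the intersection complexes you started with. Invoking a support theorem for the twisted Hitchin fibration does not close this gap either: it constrains where the summands of the decomposition theorem live, but does not manufacture a comparison between the Hitchin systems for $\mathrm{C}_G(g)$ and $\mathrm{C}_{G^\vee}(g^\vee)$, which is precisely the missing input. Your first step also glosses over a genuine difficulty: even granting a Langlands-compatible bijection on quasi-isolated classes, the centralizers $\mathrm{C}_G(g)$ and $\mathrm{C}_{G^\vee}(g^\vee)$ are not in general Langlands dual (or even isogenous to duals) of one another, so a naive summand-by-summand matching is not available.

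More broadly, what you have written is a research outline rather than a proof, and your final paragraph says as much. Since the paper treats \Cref{conj-stringy-IH} as open, there is no paper proof to compare against; the one concrete discrepancy with the paper's surrounding discussion is the reversed direction of the vanishing-cycle reduction.
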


\begin{remark}
    For $g \in G$, we let $\mathrm{C}_G(g)^{\circ} \subset \mathrm{C}_G(g)$ the neutral component and $\Gamma_g \coloneqq \pi_0(\mathrm{C}_G(g))$.
    Recall that a principal $\mathrm{C}_G(g)$-bundle on $C$ corresponds to a triple $(F, \rho, P)$ of
    a subgroup $F \subset \Gamma_g$, a connected principal $F$-bundle $\rho \colon C_{\rho} \to C$ and 
    an $F$-equivariant principal $\mathrm{C}_G(g)^{\circ}$-bundle $P$ on $C_{\rho}$, where $F$ acts on $\mathrm{C}_G(g)^{\circ}$ by the conjugation.
    We define an open and closed substack
    \[
        \CH_{\mathrm{C}_G(g), \CL, \mathrm{ell}}^{\mathrm{ss}} \subset \CH_{\mathrm{C}_G(g), \CL}^{\mathrm{ss}} 
    \]
    consisting of those $(F, \rho, P)$ with finite $\mathrm{Z}(\mathrm{C}_G(g)^{F})$. We expect an equality 
    \[
        \overline{\CH}^{\mathrm{st}}_{\mathrm{C}_G(g), \CL,  \textnormal{$0$-type}} = \CH_{\mathrm{C}_G(g), \CL, \mathrm{ell}}^{\mathrm{ss}} .
    \]
\end{remark}

We note that the decomposition theorem for the logarithmic Hitchin fibrations for general connected reductive groups was investigated by \textcite{de2025hitchin}, which is expected to have some applications to \Cref{conj-stringy-IH}.

We now explain that the arguments in \cite{kinjo2024cohomological,kinjo2024global} generalize to arbitrary reductive groups and a sheaf-theoretic version of \Cref{conj-stringy-IH} implies \Cref{conj-stringy-BPS}.
This will be explained in detail in forthcoming work of the author with Pavel Safronov, and we only provide some ideas here.
First, one can show that there is a function $f$ on $\mathcal{H}_{G, \CL}$ such that an equivalence of oriented $(-1)$-shifted symplectic stacks $\Crit(f) \cong \mathrm{T}^*[-1] \CH_{G}$ exists.
Further, $f$ descends to a function $f_B$ on the Hitchin base. Let $\bar{f}$ be the function on $\mathrm{H}_{G, \CL}^\mathrm{ss}$ induced from $f$.
Then one can show that the vanishing cycle functor with respect to $\bar{f}$ applied to the stringy intersection complex recovers the stringy BPS sheaf.
Then, assuming \Cref{conj-stringy-IH} holds as an isomorphism over Hitchin base, the vanishing cycle functor with respect to $f_B$ applied to this isomorphism yields \Cref{conj-stringy-BPS}.

\section{Applications to \texorpdfstring{$G$}{$G$}-local systems and its twisted version}

Let $\Sigma$ be a compact oriented $2$-manifold and $G$ be a semisimple group. 
For a torsion element $g \in G$, consider the following diagram:
\[\begin{tikzcd}
	{\mathcal{L}\mathrm{oc}_{\mathrm{C}_G(g)}(\Sigma)} & {\mathcal{L}\mathrm{oc}_{G}(\Sigma)} & {\mathcal{L}\mathrm{oc}_{G}(\Sigma \times S^1)} \\
	{\mathrm{Loc}_{\mathrm{C}_G(g)}(\Sigma)} & {\mathrm{Loc}_{G}(\Sigma)} & {\mathrm{Loc}_{G}(\Sigma \times S^1).}
	\arrow["{{\iota_g}}", from=1-1, to=1-2]
	\arrow["{{p_{g}}}", from=1-1, to=2-1]
	\arrow["{{p_G}}", from=1-2, to=2-2]
	\arrow["{{r_G}}"', from=1-3, to=1-2]
	\arrow["{{\tilde{p}_G}}", from=1-3, to=2-3]
	\arrow["{{\bar{\iota}_g}}"', from=2-1, to=2-2]
	\arrow["{{\bar{r}_G}}", from=2-3, to=2-2]
\end{tikzcd}\]
Here, the vertical arrows are good moduli morphisms.
Using \Cref{cor-torsionloop-HG} and arguing as in the proof of \Cref{thm-mult-dim-red-HGss}, we obtain the following:
\begin{theorem}\label{thm-mult-dimred-loc}
    There is a natural isomorphism
    \[
     \bar{r}_{G, *} \mathcal{BPS}^{(0)}_{\mathrm{Loc}_{G}(\Sigma \times S^1)} \cong \bigoplus_{[g] \in G_{\mathrm{qis}} / G}  \bar{\iota}_{g, *} \mathcal{BPS}^{(0)}_{\mathrm{LLoc}_{\mathrm{C}_G(g)}(\Sigma)}.
    \]
\end{theorem}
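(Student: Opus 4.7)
The plan is to adapt the proof of \Cref{thm-mult-dim-red-HGss} almost verbatim, with $\Locstack_G(\Sigma)$ taking the role of $\CH_G^{\mathrm{ss}}$. The exponential law for mapping stacks, applied to $\Sigma \times S^1 \simeq \Sigma \times \mathrm{B}\BZ$, supplies a canonical equivalence
\[
    \Locstack_G(\Sigma \times S^1) \simeq \CL \Locstack_G(\Sigma),
\]
which identifies $r_G$ with the structure map of the loop stack and $\bar{r}_G$ with the induced map $\mathrm{L}\Loc_G(\Sigma) \to \Loc_G(\Sigma)$. Since $\Sigma$ is an oriented closed surface, AKSZ equips $\Locstack_G(\Sigma)$ with a $0$-shifted symplectic structure, so \Cref{thm-main-0} applied with $\CY = \Locstack_G(\Sigma)$ yields
\[
    \mathcal{BPS}^{(0)}_{\mathrm{L}\Loc_G(\Sigma)} \cong \bar{\iota}_*\mathcal{BPS}^{(0)}_{\mathrm{L}_{\tor}\Loc_G(\Sigma)}.
\]

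Next, \Cref{cor-torsionloop-HG} provides the identification $\CL_{\tor}\Locstack_G(\Sigma) \simeq \coprod_{[g] \in G_{\tor}/G} \Locstack_{\mathrm{C}_G(g)}(\Sigma)$, which after passing to good moduli spaces exhibits $\mathrm{L}_{\tor}\Loc_G(\Sigma)$ as a disjoint union of the $\Loc_{\mathrm{C}_G(g)}(\Sigma)$, with component inclusions identified with the maps $\bar{\iota}_g$ of the displayed diagram. Accordingly
\[
    \bar{\iota}_*\mathcal{BPS}^{(0)}_{\mathrm{L}_{\tor}\Loc_G(\Sigma)} \cong \bigoplus_{[g] \in G_{\tor}/G} \bar{\iota}_{g,*}\mathcal{BPS}^{(0)}_{\Loc_{\mathrm{C}_G(g)}(\Sigma)}.
\]

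The final step is to discard the summands attached to non-quasi-isolated $g$. If $g \in G_{\tor} \setminus G_{\mathrm{qis}}$, then $\mathrm{Z}(\mathrm{C}_G(g))$ contains a positive-dimensional torus $T$, which at every closed point $E \in \Locstack_{\mathrm{C}_G(g)}(\Sigma)$ embeds centrally into the stabilizer group. The tangent complex $\mathbb{T}_{\Locstack_{\mathrm{C}_G(g)}(\Sigma),E}$ is computed from the cochain complex of $\Sigma$ with coefficients in the adjoint local system $\mathrm{ad}(E)$, on which $T$ acts via the adjoint representation of $\mathrm{C}_G(g)$ on its Lie algebra---trivially, by centrality of $T$. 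Hence $T$ acts trivially on $\mathrm{H}^0(\mathbb{T}_{\Locstack_{\mathrm{C}_G(g)}(\Sigma),E})$, and \Cref{prop-supp-lem} forces $\mathcal{BPS}^{(0)}_{\Loc_{\mathrm{C}_G(g)}(\Sigma)} = 0$. Chaining the three displayed isomorphisms with this vanishing yields the theorem. I expect this vanishing to be the only non-formal step; it repeats the corresponding argument in the proof of \Cref{thm-mult-dim-red-HGss}, while everything else is a direct combination of \Cref{thm-main-0} and \Cref{cor-torsionloop-HG}.
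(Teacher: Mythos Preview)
Your proposal is correct and follows exactly the approach the paper indicates: it applies \Cref{thm-main-0} to $\CY = \Locstack_G(\Sigma)$, invokes \Cref{cor-torsionloop-HG} to decompose the torsion loop stack, and then eliminates the non-quasi-isolated summands via \Cref{prop-supp-lem}, precisely as in the proof of \Cref{thm-mult-dim-red-HGss}. The only cosmetic point is that your second displayed isomorphism implicitly applies $\bar{r}_{G,*}$ (since $\bar{\iota}_*$ lands on $\mathrm{L}\Loc_G(\Sigma)$ while $\bar{\iota}_{g,*}$ lands on $\Loc_G(\Sigma)$), but this is exactly the composition $\bar{r}_G \circ \bar{\iota}$ restricted to each component, so no content is missing.
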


Now we discuss a twisted version of the multiplicative dimensional reduction, which applies to cohomological Donaldson--Thomas theory for $3$-manifolds that are non-trivial $S^1$-bundles.
Let $M$ be a Seifert-fibred $3$-manifold, i.e., there exists an $S^1$-fibration $M \to \Sigma_{\mathrm{orb}}$ to a compact oriented orbifold with finite stabilizer groups.
We have the following statement, which is a twisted version of \Cref{prop-support}:

\begin{proposition}\label{prop-support-twisted}
    Let $M$ be a Seifert-fibred $3$-manifold and $G$ be a semisimple group.
    Then, for $[E] \in \Supp \mathcal{BPS}^{(0)}_{\mathrm{Loc}_G(M)}$ represented by a semisimple $G$-local system $E$, the monodromy $[g_E] \in G / G$ corresponding to the fibre $S^1$ is a torsion element.
\end{proposition}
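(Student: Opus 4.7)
The strategy parallels \Cref{prop-support}, with the Seifert fibration $\pi \colon M \to \Sigma_{\mathrm{orb}}$ replacing the trivial product $\Sigma \times S^1 \to \Sigma$. Let $\rho \colon \pi_1(M) \to G$ be the monodromy representation associated with $E$ and let $\gamma_0 \in \pi_1(M)$ denote the class of a regular fibre of $\pi$. The first input I would invoke is the classical fact that, for Seifert fibrations over an oriented orbifold base, the fibre class $\gamma_0$ is central in $\pi_1(M)$. Consequently $g_E \coloneqq \rho(\gamma_0)$ commutes with the entire image of $\rho$, hence lies in the centre of the reductive stabiliser $G_E$; since $E$ is semisimple, $g_E$ is furthermore a semisimple element of $G$.

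Suppose for contradiction that $g_E$ is not torsion. Let $T' \coloneqq \overline{\langle g_E \rangle}^{\circ}$, a positive-dimensional subtorus of $G$. Each element of $G_E$ commutes with every power of $g_E$ and, by continuity, with every element of $T'$; hence $T' \subset Z(G_E) \subset Z(G_E^{\circ})$. By \Cref{prop-supp-lem}\,(ii) it then suffices to prove that $T'$ acts trivially on the tangent cohomology $\mathrm{H}^0(\mathbb{T}_{\mathcal{L}\mathrm{oc}_G(M), E}) \cong \mathrm{H}^1(M, \Ad E)$, as this will force $\mathcal{BPS}^{(0)}_{\mathrm{Loc}_G(M)}|_{[E]} = 0$, contradicting the hypothesis that $[E]$ lies in the support.

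To establish this triviality I would employ the Leray spectral sequence
\[
    E_2^{p,q} = \mathrm{H}^p(\Sigma_{\mathrm{orb}}, R^q \pi_* \Ad E) \Longrightarrow \mathrm{H}^{p+q}(M, \Ad E).
\]
The sheaves $R^q \pi_* \Ad E$ are orbifold local systems on the base whose stalks compute $\mathrm{H}^q(S^1, \Ad E|_{\mathrm{fibre}})$: these are $\mathfrak{g}^{\Ad g_E}$ for $q=0$ and $\mathfrak{g}_{\Ad g_E}$ for $q=1$, and the two coincide because $g_E$ is semisimple. The induced $G_E$-action on these coefficients is the restriction of the adjoint action on $\mathfrak{g}$; since the $G$-stabiliser of any vector fixed by $\Ad g_E$ is a closed algebraic subgroup containing $g_E$, it contains $T' = \overline{\langle g_E \rangle}^{\circ}$, so $T'$ acts trivially on both coefficient sheaves, hence on every term of the $E_2$-page, and a fortiori on the abutment. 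The main conceptual obstacle is securing centrality of the fibre class in $\pi_1(M)$, a standard feature of oriented Seifert fibrations that I would cite directly from the three-manifold literature; once granted, the remainder is a twisted adaptation of \Cref{prop-support} in which the Leray spectral sequence replaces the explicit product decomposition of the tangent complex used there.
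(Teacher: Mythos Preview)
Your argument is correct and follows the same strategy as the paper's proof: both observe that $g_E$ lies in the centre of the reductive group $\mathrm{Aut}(E)$ (hence is semisimple), extract a positive-dimensional torus $T' \subset Z(\mathrm{Aut}(E))$ from a non-torsion $g_E$, and invoke \Cref{prop-supp-lem} after checking that $T'$ acts trivially on $\mathrm{H}^*(M, \mathrm{Ad}(E))$. The paper organizes this last step by weight-decomposing $\mathrm{Ad}(E) = \bigoplus_i \mathrm{Ad}(E)_i$ under $T'$ and showing that $f_* \mathrm{Ad}(E)_i = 0$ for $i \neq 0$ (since $\mathrm{Ad}(g_E)$ has no fixed vectors there), whereas you run the Leray spectral sequence for $\pi$ and verify $T'$-triviality of the stalks of $R^q\pi_*\mathrm{Ad}(E)$; these are two packagings of the same computation. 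You are also more explicit than the paper about why $g_E$ is central in $\mathrm{Aut}(E)$, tracing it back to centrality of the regular fibre class in $\pi_1(M)$.

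One small point to tighten: your description of the stalks of $R^q\pi_*\mathrm{Ad}(E)$ as $\mathfrak{g}^{\mathrm{Ad} g_E}$ and $\mathfrak{g}_{\mathrm{Ad} g_E}$ is accurate only over the smooth locus of $\Sigma_{\mathrm{orb}}$. Over an exceptional fibre the relevant monodromy is some $h = \rho(\gamma_{\mathrm{exc}})$ with $g_E$ a positive power of $h$ in $\pi_1$ of the solid-torus neighbourhood. This does not affect your conclusion, since $\mathfrak{g}^{h} \subset \mathfrak{g}^{g_E}$ is again $T'$-fixed, and $h$ is semisimple in characteristic zero (a power of it being semisimple) so invariants and coinvariants for $h$ agree; but it would be worth saying so explicitly.
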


\begin{proof}
    Note that $\mathrm{Aut}(E)$ is reductive and $g_E$ lies in the center of $\mathrm{Aut}(E)$.
    In particular, $g_E$ is semisimple.
    Assume that $g_E$ is not a torsion element.
    Then, the Zariski closure of the set $\{ g_E^n \mid n\in \mathbb{Z}\}$ contains a one-dimensional torus $T' \subset \mathrm{Z}(\mathrm{Aut}(E))$.
    Using \Cref{prop-supp-lem}, it suffices to show that the $T'$-action on the tangent space at $[E] \in \mathcal{L}\mathrm{oc}_G(M)$ is trivial.
    In other words, it suffices to show that the action of $T'$ on $\mathrm{H}^*(M, \mathrm{Ad}(E))$ is trivial.

    To see this, consider the weight decomposition with respect to $T'$
    \[
        \mathrm{Ad}(E) = \bigoplus_{i \in \mathbb{Z}} \mathrm{Ad}(E)_i
    \]
    corresponding to $\mathfrak{g} = \bigoplus_{i \in \mathbb{Z}}  \mathfrak{g}_i$.
    It suffices to show that the cohomology of $\mathrm{Ad}(E)_i$ is zero if $i \neq 0$.
    We note that the monodromy operator 
    \[
     g_E \cdot \colon \mathfrak{g}_i \to \mathfrak{g}_i
    \]
    does not contain $1$ as an eigenvalue for $i \neq 0$. Indeed, an eigenvector for eigenvalue $1$ needs to be a fixed point with respect to the $T'$-action.
    Let $f \colon M \to \Sigma_{\mathrm{orb}}$ be the quotient map.
    Then, since the monodromy operator on $\mathrm{Ad}(E)_i$ does not contain a fixed part for $i \neq 0$, we have $f_* \mathrm{Ad}(E)_i = 0$.
    In particular, we have the vanishing of the cohomology as desired.
\end{proof}

For each positive integer $n \in \mathbb{Z}_{>0}$, consider the quotient
\[
 \Sigma_{\mathrm{orb}, M}^{[n]} \coloneqq M /_{\mu_n} S^1,    
\]
where $\mu_n$ is the natural action of $S^1$ on $M$ composed with the $n$-th multiplication map $\cdot n \colon S^1 \to S^1$.
In particular, $\Sigma_{\mathrm{orb}, M}^{[n]}$ is an orbifold whose generic stabilizers are $\mathbb{Z}/ n \mathbb{Z}$.
Consider the natural map
\[
 \iota_{n} \colon \mathcal{L}\mathrm{oc}_G(\Sigma_{\mathrm{orb}, M}^{[n]}) \to \mathcal{L}\mathrm{oc}_G(M), \quad \bar{\iota}_n \colon \mathrm{Loc}_{G}(\Sigma_{\mathrm{orb}, M}^{[n]}) \to \mathrm{L}\mathrm{oc}_G(M).
\]
\Cref{prop-support-twisted} and the quasi-compactness of the character stack implies the following:

\begin{corollary}\label{cor-enough-divisible}
    Let $M$ be a Seifert-fibred $3$-manifold and $G$ be a semisimple group.
    Then there exists an integer $N \in \mathbb{Z}_{>0}$ such that $\Supp \mathcal{BPS}^{(0)}_{\mathrm{Loc}_G(M)}$ is contained in the image of $\bar{\iota}_N$.
\end{corollary}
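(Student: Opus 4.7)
The plan is to reduce this to an elementary fact about constructible subsets of complex algebraic varieties. First I would construct the fibre monodromy morphism $\phi \colon \mathrm{Loc}_G(M) \to G/\!\!/G$ that sends a semisimple $G$-local system $E$ to the conjugacy class $[g_E]$ of its monodromy along a regular fibre $S^1 \hookrightarrow M$. This is well-defined because in a Seifert fibration the regular fibres are mutually conjugate in $\pi_1(M)$, so the fibre loop $\gamma$ gives a well-defined conjugacy class in any representation. Since $\mathrm{Loc}_G(M)$ is a scheme of finite type and $\phi$ is a morphism of finite-type schemes, the image $\phi(\Supp \mathcal{BPS}^{(0)}_{\mathrm{Loc}_G(M)})$ is a constructible subset of $G/\!\!/G$.

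Next I would invoke \Cref{prop-support-twisted}: every closed point of $\Supp \mathcal{BPS}^{(0)}_{\mathrm{Loc}_G(M)}$ has torsion fibre monodromy, so $\phi(\Supp)$ is contained in the set $T^G \subset G/\!\!/G$ of conjugacy classes of torsion semisimple elements of $G$. The key observation is that $T^G = \bigcup_{n \geq 1}\{[g] : g^n = e\}$ is a \emph{countable} subset of the complex affine variety $G /\!\!/ G$, because in a semisimple algebraic group there are only finitely many conjugacy classes of elements of any fixed finite order. A constructible subset of a complex variety is a finite union of irreducible locally closed subsets, and any such irreducible set of positive dimension is uncountable, so a constructible subset contained in a countable set must itself be finite. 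Therefore $\phi(\Supp)$ consists of finitely many torsion conjugacy classes.

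Let $N$ be the least common multiple of the orders of representatives of these finitely many classes. The final step is the identification of $\bar{\iota}_N(\mathrm{Loc}_G(\Sigma_{\mathrm{orb},M}^{[N]}))$ with the closed substack of $\mathrm{Loc}_G(M)$ defined by the condition $\rho(\gamma)^N = e$. Since $\Sigma_{\mathrm{orb},M}^{[N]} = M/_{\mu_N} S^1$ has the same underlying coarse space as $M/S^1$ but with generic orbifold stabilizer $\mathbb{Z}/N\mathbb{Z}$, its orbifold fundamental group is $\pi_1(M)/\langle \gamma^N \rangle$, and its $G$-local systems correspond precisely to representations of $\pi_1(M)$ with $N$-torsion fibre monodromy. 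By construction of $N$, every closed point of $\Supp$ satisfies this condition, and since the support is a closed subset determined by its closed points, we conclude $\Supp \subset \bar{\iota}_N(\mathrm{Loc}_G(\Sigma_{\mathrm{orb},M}^{[N]}))$.

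The main obstacle I expect is the constructibility/countability step: one must check carefully that $\phi$ really is a morphism of finite-type schemes (this uses quasi-compactness of the character variety, together with the fact that evaluating a universal representation on the single element $\gamma \in \pi_1(M)$ is algebraic), and that finitely many conjugacy classes of $n$-torsion elements exist in $G$ for each $n$ (standard for semisimple $G$, e.g.\ by reducing to a maximal torus modulo the Weyl group). The orbifold-fundamental-group identification and the rest are routine once these are in place.
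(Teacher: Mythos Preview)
Your argument is correct and is essentially a detailed expansion of the paper's one-sentence proof, which just cites \Cref{prop-support-twisted} together with quasi-compactness of the character stack. The paper does not spell out how quasi-compactness is used; your route via the fibre-monodromy map $\phi\colon \mathrm{Loc}_G(M)\to G/\!\!/G$, constructibility of $\phi(\Supp)$, and countability of torsion classes is one natural way to make this precise. An equally short alternative, perhaps closer to what the paper intends, is to argue directly on $\mathrm{Loc}_G(M)$: the support is Noetherian with finitely many irreducible components, and an irreducible complex variety cannot be a countable union of proper closed subsets $\{\rho(\gamma)^n=e\}$, so each component lies in some $Z_n$. Either way the content is the same.
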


Moreover, we expect the following:
\begin{conjecture}\label{conj-Seifert}
    \begin{enumerate}
        \item The character stack $\mathcal{L}\mathrm{oc}_G(\Sigma_{\mathrm{orb}, M}^{[n]})$ admits a canonical $0$-shifted symplectic structure.
        \item The map $\iota_{n} \colon \mathcal{L}\mathrm{oc}_G(\Sigma_{\mathrm{orb}, M}^{[n]}) \to \mathcal{L}\mathrm{oc}_G(M)$ admits a natural Lagrangian structure.
        \item Take a sufficiently divisible $N$ as in \Cref{cor-enough-divisible}. Then there exists a natural isomorphism 
        \[
          \bar{\iota}_{N, *} \mathcal{BPS}_{\mathrm{Loc}_G(\Sigma_{\mathrm{orb}, M}^{[N]})}^{(0)} \cong \mathcal{BPS}_{\mathrm{Loc}_G(M)}^{(0)}.   
        \]
    \end{enumerate}
\end{conjecture}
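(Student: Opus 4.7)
The plan is to prove the three parts of the conjecture in order, closely mirroring the argument for \Cref{thm-main-0}. For part (i), observe that $\Sigma_{\mathrm{orb}, M}^{[n]}$ is a compact oriented $2$-dimensional orbifold with generic stabilizer $\mathbb{Z}/n\mathbb{Z}$, and its Betti stack carries a canonical $\mathcal{O}$-orientation of dimension $2$ coming from orbifold Poincar\'e duality. The AKSZ formalism \cite[\S 2.1]{PTVV13} then produces the desired $0$-shifted symplectic structure on $\mathcal{L}\mathrm{oc}_G(\Sigma_{\mathrm{orb}, M}^{[n]})$. For part (ii), the map $M \to \Sigma_{\mathrm{orb}, M}^{[n]}$ is an orbifold $S^1$-fibration, which carries a relative $\mathcal{O}$-orientation of dimension $1$ obtained from the standard orientation of the circle fibres and compatible with the orientation of the base orbifold. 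The relative AKSZ formalism \cite[Theorem 2.34]{calaque2024shifted} then endows $\iota_n$ with a canonical Lagrangian structure.

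For part (iii), the strategy is to imitate the proof of \Cref{prop-compare-BPS}. By \Cref{cor-enough-divisible}, it suffices to produce a natural isomorphism on the image of $\bar{\iota}_N$, and for this it is enough to produce an \'etale local isomorphism. The plan is to construct a family $\tilde{\mathcal{X}} \to \mathbb{A}^1$ of oriented $(-1)$-shifted symplectic derived stacks whose fibre over any $\lambda \neq 0$ recovers $\mathcal{L}\mathrm{oc}_G(M)$ and whose special fibre over $0$ is the $(-1)$-shifted cotangent stack $\mathrm{T}^*[-1]\mathcal{L}\mathrm{oc}_G(\Sigma_{\mathrm{orb}, M}^{[N]})$. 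Concretely, one replaces the Betti stack of $M$ by a derived stack $\mathfrak{B}_M$ over $\mathbb{A}^1 \times (\Sigma_{\mathrm{orb}, M}^{[N]})^{\mathrm{Betti}}$ whose fibrewise degeneration turns the $S^1$-fibres into $\mathrm{B}\hat{\mathbb{G}}_{\mathrm{a}}$, and sets $\tilde{\mathcal{X}} \coloneqq \mathrm{Map}_{\mathbb{A}^1}(\mathfrak{B}_M, \mathrm{B}G \times \mathbb{A}^1)$. An \'etale local isotriviality statement in the spirit of \Cref{lem-locally-isotrivial} should hold near the $N$-torsion monodromy locus, and the specialization/nearby cycles argument from \Cref{prop-compare-BPS} would then complete the proof, with part (i) of the conjecture used to identify the constant-nearby-cycles side with the orbifold BPS sheaf.

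The main obstacle is the construction of the deformation $\mathfrak{B}_M$ in a way that simultaneously preserves the $(-1)$-shifted symplectic structure, the orientation, and the stratification by monodromy order, in the presence of non-trivial Seifert invariants. For the product case $M = \Sigma \times S^1$ this is essentially the construction of Section~4, but for general Seifert-fibred $M$ the fractional Euler number of the $S^1$-fibration obstructs a naive globalization. The correct deformation must absorb these twisting data into the orbifold structure of $\Sigma_{\mathrm{orb}, M}^{[N]}$, and verifying this rigorously will require a careful local analysis near the singular fibres of the Seifert fibration, together with a precise understanding of how the relative $\mathcal{O}$-orientation on $M \to \Sigma_{\mathrm{orb}, M}^{[N]}$ interacts with the degeneration of the circle fibres to $\mathrm{B}\hat{\mathbb{G}}_{\mathrm{a}}$.
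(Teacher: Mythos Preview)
The statement is a \emph{conjecture}: the paper does not prove it, only sketches an expected approach in the paragraph immediately following it. So there is no ``paper's own proof'' to compare against, only a heuristic outline.

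That said, your proposal and the paper's outline agree on parts (i) and (ii): both expect these to follow from the AKSZ and relative AKSZ formalisms respectively. For part (iii), however, the paper's suggested route differs from yours in a way that sidesteps your main obstacle. You propose to build a source-side deformation $\mathfrak{B}_M \to \mathbb{A}^1$ that degenerates the $S^1$-fibres of $M$ to $\mathrm{B}\hat{\mathbb{G}}_{\mathrm{a}}$, and you correctly identify that globalizing this for nontrivial Seifert invariants is delicate. The paper instead suggests using the \emph{deformation to the normal cone of the Lagrangian} $\iota_N \colon \mathcal{L}\mathrm{oc}_G(\Sigma_{\mathrm{orb}, M}^{[N]}) \hookrightarrow \mathcal{L}\mathrm{oc}_G(M)$: once (ii) is established, this deformation space automatically carries a relative exact $(-1)$-shifted symplectic structure by \cite{calaque2024shifted}, with no need to construct $\mathfrak{B}_M$ by hand. (In the untwisted case of \S4 the two constructions coincide, as shown by the identification \eqref{eq-deformed-loop-normal-cone}; the point is that the normal-cone description is the one that generalizes.) The paper then argues that the proof of \Cref{prop-support-twisted} already shows $\mathcal{L}\mathrm{oc}_G(M)$ is formal-locally modelled on an ordinary loop stack near the support of the BPS sheaf, so the specialization-map argument of \Cref{prop-compare-BPS} would go through by reduction to the untwisted case. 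Your $\mathfrak{B}_M$ approach is not wrong in spirit, but the intrinsic deformation-to-normal-cone viewpoint is what makes the strategy plausible without resolving the twisting issues you flag.
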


We expect that the first two statements are consequences of the AKSZ formalism.
Assuming them, the deformation space of $\iota_n$ to the normal cone carries a relative exact $(-1)$-shifted symplectic structure.
Then, the isomorphism in the third statement would be constructed as the specialization map, as in the untwisted case in \Cref{thm-main-0}.
Since the proof of \Cref{prop-support-twisted} shows that the character stack $\mathcal{L}\mathrm{oc}_G(M)$ around a point in the support of the BPS sheaf is formal-locally modeled on the loop space,
one can show that the specialization map between the BPS sheaves is an isomorphism once it is constructed, by reducing to the untwisted case as proved in \Cref{prop-compare-BPS}.

    Let $\Sigma$ be a compact oriented $2$-manifold. 
    Then $S^1$-bundles on $\Sigma$ are classified by the first Chern class in $\mathrm{H}^2(\Sigma) \cong \mathbb{Z}$.
    We let $M_1$ be the $S^1$-bundle on $\Sigma$ corresponding to $1 \in \mathbb{Z}$.
    The character stack $\mathcal{L}\mathrm{oc}_G(\Sigma_{M_1}^{[n]})$ can be described as follows.
    First, it decomposes according to the monodromy of order $n$ along the fibre $S^1$:
    \[
        \mathcal{L}\mathrm{oc}_G(\Sigma_{M_1}^{[n]}) = \coprod_{\substack{[g] \in G / G \\ g^n = e}}  \mathcal{L}\mathrm{oc}_G(\Sigma_{M_1}^{[n]})_g.
    \]
    Let $\mathrm{N}_{G}(\langle g \rangle)$ be the normalizer of $\langle g \rangle \subset G$ and let $m$ be the order of the kernel of the map $\mathbb{Z} / n \mathbb{Z} \xrightarrow{1 \mapsto g} G$.
    Then $\mathcal{L}\mathrm{oc}_G(\Sigma_{M_1}^{[n]})_g$ is the stack parametrizing the following homotopy commutative diagram:
    \[\begin{tikzcd}
        \Sigma & {(\mathrm{B}\mathrm{N}_{G}(\langle g \rangle) / \langle g \rangle) \times \mathrm{B}(\mathbb{Z} / m \mathbb{Z})} \\
        {\mathrm{B}^2 (\mathbb{Z} / n \mathbb{Z} )} & {\mathrm{B}^2 (\mathbb{Z} / n \mathbb{Z} )}
        \arrow[from=1-1, to=1-2]
        \arrow[from=1-1, to=2-1]
        \arrow[from=1-2, to=2-2]
        \arrow[equals, from=2-1, to=2-2]
    \end{tikzcd}\]
    where the left vertical map corresponds to the generator in $\mathrm{H}^2(\Sigma, \mathbb{Z} / n \mathbb{Z}) $ and the right vertical map classifies $\mathrm{B} \mathrm{N}_{G}(\langle g \rangle) \to (\mathrm{B}\mathrm{N}_{G}(\langle g \rangle) / \langle g \rangle) \times \mathrm{B}(\mathbb{Z} / m \mathbb{Z})$.
    By expanding the definition, we see that there is a natural map
    \[
        \mathcal{L}\mathrm{oc}_G(\Sigma_{M_1}^{[n]})_g \to \mathcal{L}\mathrm{oc}_{\mathrm{N}_{G}(\langle g \rangle) / \langle g \rangle}(\Sigma)_1 
    \]
    where $\mathcal{L}\mathrm{oc}_{\mathrm{N}_{G}(\langle g \rangle) / \langle g \rangle}(\Sigma)_1$ is the open and closed substack of $\mathcal{L}\mathrm{oc}_{\mathrm{N}_{G}(\langle g \rangle) / \langle g \rangle}(\Sigma)$ whose associated $\mathbb{Z} / n \mathbb{Z}$-gerbe corresponds to the generator in $\mathrm{H}^2(C, \mathbb{Z} / n \mathbb{Z})$,
    and the above map is a $\mathcal{L}\mathrm{oc}_{\langle g \rangle}(\Sigma)$-torsor.
    In particular, $\mathcal{L}\mathrm{oc}_G(\Sigma_{M_1}^{[n]})_g$ carries a $0$-shifted symplectic structure.

    Motivated by the above discussion, we define the twisted stringy BPS cohomology for $\mathrm{Loc}_G(\Sigma)$ by
    \begin{equation}\label{eq-twisted-stringy}
     \mathrm{H}^*_{\mathrm{BPS}, \mathrm{st}, \mathrm{tw}}(\mathrm{Loc}_G(\Sigma)) \coloneqq \bigoplus_{\substack{[g] \in G / G \\ g^N = e}} \mathrm{H}^* \left(\mathrm{Loc}_G(\Sigma_{M_1}^{[N]})_g, \mathcal{BPS}^{(0)}_{\mathrm{Loc}_G(\Sigma_{M_1}^{[N]})_g} \right)
    \end{equation}
    for sufficiently divisible $N$.
    Assuming \Cref{conj-Seifert}, the twisted stringy BPS cohomology computes the BPS cohomology for $\mathrm{Loc}_G(M_1)$.
    In particular, the Langlands duality conjecture for the BPS cohomology of $3$-manifolds \cite[Conjecture 10.3.33]{bu2025cohomology} suggests that the twisted stringy BPS cohomology satisfies the Langlands duality conjecture:

\begin{conjecture}\label{conj-betti-twisted-stringy}
    For a compact oriented $2$-manifold $\Sigma$ and a semisimple group $G$, there exists a natural isomorphism
    \[
        \mathrm{H}^*_{\mathrm{BPS}, \mathrm{st}, \mathrm{tw}}(\mathrm{Loc}_G(\Sigma)) \cong \mathrm{H}^*_{\mathrm{BPS}, \mathrm{st}, \mathrm{tw}}(\mathrm{Loc}_{G^{\vee}}(\Sigma)).
    \]
\end{conjecture}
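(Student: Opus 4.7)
The plan is to derive \Cref{conj-betti-twisted-stringy} conditionally from two prior inputs: part~(iii) of \Cref{conj-Seifert} applied to the degree-one $S^1$-bundle $M_1 \to \Sigma$, and the Langlands duality conjecture for the BPS cohomology of closed $3$-manifolds (Conjecture~10.3.33 of \cite{bu2025cohomology}) applied to $M_1$ itself. The whole argument is therefore conditional on these two statements; neither is proved in the paper.

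First I would identify $\mathrm{H}^*_{\mathrm{BPS}, \mathrm{st}, \mathrm{tw}}(\mathrm{Loc}_G(\Sigma))$ with $\mathrm{H}^*(\mathrm{Loc}_G(M_1), \mathcal{BPS}^{(0)}_{\mathrm{Loc}_G(M_1)})$. Choose $N$ sufficiently divisible so that \Cref{cor-enough-divisible} applies to $M_1$, so that the support of $\mathcal{BPS}^{(0)}_{\mathrm{Loc}_G(M_1)}$ lies inside the image of $\bar{\iota}_N$. Assuming part~(iii) of \Cref{conj-Seifert}, we then have
\[
\bar{\iota}_{N,*} \mathcal{BPS}^{(0)}_{\mathrm{Loc}_G(\Sigma_{M_1}^{[N]})} \cong \mathcal{BPS}^{(0)}_{\mathrm{Loc}_G(M_1)}.
\]
Taking global sections and using the decomposition $\mathcal{L}\mathrm{oc}_G(\Sigma_{M_1}^{[N]}) = \coprod_{[g]} \mathcal{L}\mathrm{oc}_G(\Sigma_{M_1}^{[N]})_g$ indexed by conjugacy classes of $N$-torsion fibre monodromies, as recalled immediately before \eqref{eq-twisted-stringy}, recovers exactly the right-hand side of \eqref{eq-twisted-stringy}. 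Next I would apply Conjecture~10.3.33 of \cite{bu2025cohomology} to $M_1$ to obtain
\[
\mathrm{H}^*(\mathrm{Loc}_G(M_1), \mathcal{BPS}^{(0)}_{\mathrm{Loc}_G(M_1)}) \cong \mathrm{H}^*(\mathrm{Loc}_{G^\vee}(M_1), \mathcal{BPS}^{(0)}_{\mathrm{Loc}_{G^\vee}(M_1)}),
\]
and finally reverse the first step with $G$ replaced by $G^\vee$ to produce the right-hand side of the conjecture.

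The main obstacle is part~(iii) of \Cref{conj-Seifert}, which is where the genuine sheaf-theoretic work sits. A natural strategy mirrors the proof of \Cref{prop-compare-BPS}: construct a deformation of $\iota_N$ to its normal cone as a family over $\mathbb{A}^1$ carrying a relative exact $(-1)$-shifted symplectic structure together with a relative orientation, both obtained from the relative AKSZ formalism once parts~(i) and~(ii) of \Cref{conj-Seifert} are in place; then prove a twisted analogue of \Cref{lem-locally-isotrivial} showing that the classical truncation, together with its relative $d$-critical structure, is formal-locally constant along $\mathbb{A}^1$; finally define the desired isomorphism as the specialization map. The formal-local input should already follow from the weight-decomposition argument in \Cref{prop-support-twisted}, where the vanishing of the cohomology of the non-zero-weight pieces of $\mathrm{Ad}(E)$ shows that $\mathcal{L}\mathrm{oc}_G(M_1)$ looks formal-locally like a loop stack near the support of the BPS sheaf. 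Once part~(iii) is established, the reduction outlined above is formal. The remaining and far deeper obstacle is the $3$-manifold Langlands duality conjecture itself, which is open for every $G$ and not expected to admit a purely algebro-geometric attack within the framework of this paper.
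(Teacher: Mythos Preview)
Your proposal matches the paper's own treatment exactly: \Cref{conj-betti-twisted-stringy} is stated as a conjecture, not proved, and the paper motivates it precisely by the two-step reduction you describe --- first identifying the twisted stringy BPS cohomology with $\mathrm{H}^*(\mathrm{Loc}_G(M_1), \mathcal{BPS}^{(0)}_{\mathrm{Loc}_G(M_1)})$ via \Cref{conj-Seifert}(iii), then invoking the $3$-manifold Langlands duality conjecture \cite[Conjecture~10.3.33]{bu2025cohomology}. You have correctly identified that both inputs are open, and your sketch of how to attack \Cref{conj-Seifert}(iii) via deformation to the normal cone and a twisted analogue of \Cref{lem-locally-isotrivial} is exactly the strategy the paper outlines in the paragraph following that conjecture.
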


One can mimic the definition of the twisted stringy BPS cohomology for $\mathrm{Loc}_G(\Sigma)$ to define the 
twisted stringy BPS cohomology $\mathrm{H}^*_{\mathrm{BPS}, \mathrm{st}, \mathrm{tw}}(\mathrm{H}_G^{\mathrm{ss}})$ for the moduli space of semistable $G$-Higgs bundles.
We do not discuss the details here since it is parallel to the case of the character stacks and a similar discussion has already appeared in \cite[\S 10.3.24]{bu2025cohomology}.
We expect that the non-abelian Hodge correspondence preserves the twisted stringy BPS cohomology and that the Dolbeault version of \Cref{conj-betti-twisted-stringy} also holds:

\begin{conjecture}\label{conj-stringy-twisted-HGss}
    For a smooth projective curve $C$ and semisimple group $G$, there exists a natural isomorphism
    \[
        \mathrm{H}^*_{\mathrm{BPS}, \mathrm{st}, \mathrm{tw}}(\mathrm{H}^{\mathrm{ss}}_G) \cong \mathrm{H}^*_{\mathrm{BPS}, \mathrm{st}, \mathrm{tw}}(\mathrm{H}^{\mathrm{ss}}_{G^{\vee}}).
    \]
\end{conjecture}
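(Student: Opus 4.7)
The plan is to upgrade the roadmap sketched for \Cref{conj-stringy-BPS} at the end of Section 5, adapting it to the twisted stringy BPS cohomology. The strategy has three main steps: a Dolbeault interpretation of each summand of $\mathrm{H}^*_{\mathrm{BPS}, \mathrm{st}, \mathrm{tw}}(\mathrm{H}_G^{\mathrm{ss}})$, a reduction to a stringy intersection-cohomology matching via the Maulik--Shen vanishing cycle trick, and an endoscopic proof of the latter matching.

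First, I would give the Dolbeault analog of the Betti decomposition \eqref{eq-twisted-stringy}. Mirroring the description of $\mathrm{Loc}_G(\Sigma_{M_1}^{[N]})_g$ as a $\mathcal{L}\mathrm{oc}_{\langle g \rangle}(\Sigma)$-torsor over a component $\mathcal{L}\mathrm{oc}_{\mathrm{N}_G(\langle g \rangle)/\langle g \rangle}(\Sigma)_1$ with nontrivial associated $\mathbb{Z}/N\mathbb{Z}$-gerbe, the $[g]$-summand of $\mathrm{H}^*_{\mathrm{BPS}, \mathrm{st}, \mathrm{tw}}(\mathrm{H}_G^{\mathrm{ss}})$ should be identified with the BPS cohomology of a moduli stack of $\mathrm{N}_G(\langle g \rangle)/\langle g \rangle$-Higgs bundles on $C$, twisted by a $\mathbb{Z}/N\mathbb{Z}$-gerbe corresponding to the generator of $\mathrm{H}^2(C, \mathbb{Z}/N\mathbb{Z})$.

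Second, as in \Cref{conj-stringy-IH}, I would pass to the logarithmic setting by replacing $K_C$ by a sufficiently positive line bundle $\CL$, so that all relevant Higgs moduli are smooth and carry a well-defined intersection complex. Summing over torsion conjugacy classes $[g]$ with $g^N = e$ the intersection cohomologies of the elliptic loci of the gerbe-twisted Higgs moduli above, one obtains a twisted stringy intersection cohomology $\mathrm{IH}^*_{\mathrm{st}, \mathrm{tw}}(\mathrm{H}_{G, \CL}^{\mathrm{ss}})$. A relative isomorphism
\[
 \mathrm{IH}^*_{\mathrm{st}, \mathrm{tw}}(\mathrm{H}_{G, \CL}^{\mathrm{ss}}) \cong \mathrm{IH}^*_{\mathrm{st}, \mathrm{tw}}(\mathrm{H}_{G^{\vee}, \CL}^{\mathrm{ss}})
\]
over the twisted Hitchin base would then be promoted to \Cref{conj-stringy-twisted-HGss} by constructing a function $f$ on the twisted moduli $\CH_{G, \CL, \mathrm{tw}}$ whose oriented critical locus recovers $\mathrm{T}^*[-1]\CH_{G, \mathrm{tw}}$, descending $f$ to a function $f_B$ on the Hitchin base, and applying the vanishing cycle functor $\varphi_{f_B}$ exactly as outlined at the end of Section 5.

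The main obstacle is proving the above relative isomorphism. Here I would invoke the decomposition theorem for logarithmic Hitchin fibrations for general connected reductive groups (extending \cite{de2025hitchin} to orbifold/gerbe-twisted curves), and then identify strata via a geometric endoscopic matching: a torsion conjugacy class $[g] \in G/G$ together with a nontrivial $\mathbb{Z}/N\mathbb{Z}$-gerbe should correspond, under Langlands duality, to a quasi-isolated endoscopic subgroup of $G^{\vee}$ equipped with a dual gerbe, refining the Hausel--Thaddeus/Maulik--Shen matching through the Langlands pairing between finite central-type subgroups of $G$ and $G^{\vee}$. Working out this precise bijection, and verifying that the gerbe data transform correctly in the spectral decomposition of the Hitchin map in the presence of disconnected centralizers, is the hard part; once it is in place, the rest of the argument should proceed in parallel with the (still conjectural) untwisted framework of \Cref{conj-stringy-BPS}.
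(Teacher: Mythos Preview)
The statement you are attempting to prove is \emph{Conjecture}~\ref{conj-stringy-twisted-HGss}; the paper does not prove it and offers no proof sketch beyond the one-line remark that the non-abelian Hodge correspondence is expected to preserve the twisted stringy BPS cohomology. There is therefore nothing in the paper to compare your argument against.

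That said, your proposal is not a proof but a roadmap, and you are candid about this: the core step (the relative stringy intersection-cohomology matching over the twisted Hitchin base, together with an endoscopic bijection between torsion conjugacy classes with gerbe data on the $G$-side and their Langlands-dual counterparts) is left as ``the hard part''. This is precisely the analogue of the open Conjecture~\ref{conj-stringy-IH} in the untwisted setting, which the paper also leaves unproved. Your reduction from the twisted BPS statement to a logarithmic intersection-cohomology statement via the Maulik--Shen vanishing-cycle trick is a faithful adaptation of the strategy the paper outlines at the end of Section~5 for the untwisted Conjecture~\ref{conj-stringy-BPS}, so the overall architecture is reasonable and in the spirit of the paper. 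But until the endoscopic matching and the gerbe-twisted decomposition theorem are actually established, what you have is a plausible program, not a proof.
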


\begin{remark}
    For a Seifert-fibred $3$-manifold $M \to \Sigma_{\mathrm{orb}}$, we believe that the moduli space $\mathcal{L}\mathrm{oc}_G(\Sigma_{\mathrm{orb}, M}^{[n]})$ is closely related to the moduli space of semistable parabolic Higgs bundles for a smooth projective curve with marked points in $\Sigma_{\mathrm{orb}}$.
    When $G =\mathrm{SL}(2, \mathbb{C})$, Muñoz-Echániz \cite[Theorem C]{munoz2025counting} has studied the relation between the stable loci of these moduli spaces and applied to the study of Abouzaid--Manolescu's $\mathrm{SL}(2, \mathbb{C})$-Floer homology \cite{abouzaid2020sheaf}, which coincides with the cohomology of the restriction of the BPS sheaf to the stable locus.

    This consideration motivates a parabolic version of \Cref{conj-stringy-twisted-HGss} corresponding to the Langlands duality for Seifert-fibred $3$-manifolds. We plan to investigate this further in the future work.
\end{remark}

\printbibliography

\end{document}